\newtheorem{theorem}{Theorem}[section]
\newtheorem{corollary}[theorem]{Corollary}
\newtheorem{lemma}[theorem]{Lemma}
\newtheorem{fact}[theorem]{Fact}
\newtheorem{proposition}[theorem]{Proposition}
\newtheorem{problem}[theorem]{Problem}
\theoremstyle{definition}
\newtheorem{definition}[theorem]{Definition}
\newtheorem{remark}[theorem]{Remark}
\newtheorem{example}[theorem]{Example}
\numberwithin{equation}{section}
\newcommand{\norm}[1]{\left\|{#1}\right\|}
\newcommand{\abs}[1]{\left|{#1}\right|}
\newcommand{\scalarp}[1]{\langle{#1}\rangle}
\newcommand{\scal}[2]{{\left\langle{{#1}\,\vert\,{#2}}\right\rangle}}
\DeclareMathOperator{\argmin}{argmin}
\DeclareMathOperator{\dom}{dom}
\newcommand{\minimize}[2]{\ensuremath{\underset{\substack{{#1}}}%
{\text{\rm minimize}}\;\;#2 }}
\newcommand{\genf}{h}
\newcommand{\hh}{\mathcal{H}}
\newcommand{\B}{\mathcal{B}}
\newcommand{\prox}{\mathrm{prox}}
\newcommand{\R}{\mathbb{R}}
\newcommand{\KK}{\mathbb{K}}
\newcommand{\RP}{\ensuremath{{\mathbb R}_+}}
\newcommand{\RX}{\ensuremath{\left]-\infty, +\infty\right]}}
\newcommand{\RPP}{\ensuremath{{\mathbb R}_{++}}}
\newcommand{\N}{\mathbb{N}}
\begin{document}

\title{ {\bf The variable metric forward-backward splitting 
algorithm under mild differentiability 
assumptions 
%and line searches 
}}
%under weakened differentiability assumptions}}
\author{Saverio Salzo\\[1ex]
{\small\em LCSL, Istituto Italiano di Tecnologia and
Massachusetts Institute of Technology}\\[-0.5ex] 
{\small\em Via Morego 30, 16163 Genova, Italy}\\[-0.5ex]  
{\small\em email: saverio.salzo@iit.it}
}
\date{}
\maketitle
{\abstract 
We study the variable metric forward-backward splitting algorithm for convex minimization
problems without the standard assumption of the Lipschitz continuity of the gradient.
In this setting, we prove that, by requiring only 
mild assumptions on the smooth part of the objective function
and using several types of line search procedures 
for determining either the gradient descent stepsizes, or
the relaxation parameters, one still obtains
weak convergence of the iterates and convergence in the objective function values. 
Moreover, the $o(1/k)$  convergence rate in the function values is obtained
if slightly stronger differentiability assumptions are added. 
%Our results extend and unify several studies on variable metric 
%proximal/projected gradient methods. 
We also illustrate several applications
% scenarios where the proposed algorithms can be applied,
including 
problems that involve
Banach spaces and functions of divergence type. }

%\begin{keywords}{\small Convex optimization, forward backward splitting,
%line search methods,
%Fej\'er sequences, convergence rate in objective function values.}
%\end{keywords}
\vspace{1ex}
\noindent
{\bf\small Keywords.} {\small Convex optimization, forward-backward algorithm,
variable metric, inexact line search methods,
quasi-Fej\'er sequences, global convergence, convergence rates.}\\[1ex]
\noindent
{\bf\small AMS Mathematics Subject Classification:} {\small 65K05, 90C25, 90C30}

\section{Introduction}

The forward-backward splitting algorithm \cite{Smms05} is nowadays a
well-established and 
widely used first order optimization method
that is well suited for an objective function composed
by a smooth convex function plus a (possibly nonsmooth) convex simple
function. 
%The advantage of this type of algorithm is that it allows to completely decouple
%the contributions of the two functional components and to activate them separately:
%it consists in a gradient descent step
%in the smooth part and in a backward implicit step in the
%nonsmooth part. 
%The convergence properties of 
This algorithm
has been studied in a number of works \cite{Livre1,Tebu09,Bre09,Smms05,Dav2015}
which prove weak convergence of the iterates as well as $o(1/k)$ convergence rate
in function values.  The \emph{variable metric} version of the forward-backward method aims at accelerating the convergence of the standard algorithm.
It was first proposed in \cite{Che97} and its
global convergence property has been
established in full generality in \cite{Comb2014} where, 
under an appropriate monotonicity condition on the metrics, the authors
prove weak convergence of the iterates.
The same algorithm is also analyzed in \cite{Cho14,Fra15} for the minimization of \emph{non convex} composite functions and global convergence is achieved by assuming 
the Kurdyka-\L ojasiewicz property, together with compactness conditions,
and suitably controlling the behavior of the variable metrics. 
%We note in passing that, in these studies worst case 
%global convergence rates in objective values are missing. 

In this context, a fundamental and commonly adopted assumption 
 is that the gradient of the smooth part is Lipschitz continuous on the entire space.  
 However, there are a number of applications in which this 
condition is not satisfied: for instance, in inverse problems when the data fidelity term is
 based on Banach norms \cite{Bre09,Sch2012} or Bregman distances 
 (e.g., the Kullback-Leibler divergence, which is the appropriate 
 choice when the data are corrupted by Poisson noise \cite{Bon12,Sal14}).

\subsection{Objective and main contribution}
In this paper we address the convergence analysis of 
the variable metric forward-backward splitting algorithm in infinite dimensional Hilbert spaces 
without the assumption of 
the Lipschitz continuity of the gradient and using different
types of line search procedures.
This study provides global convergence guarantees, both in terms of convergence of the iterates
and rates of convergence in function values,
and shows that the scope of applicability of the algorithm
is significantly wider than that for which it was originally devised,
up to cover problems involving Banach spaces and objective functions of divergence type.
%The final results completely parallels the ones achieved by 
%the state of the art convergence analysis of the stationary (metric)
%forward-backward algorithm
%under standard differentiability assumptions.
Our analysis is based on a general convergence principle for 
 abstract variable metric descent algorithms which blends the concept 
 of quasi-Fejer sequence with that of a sufficient decreasing condition. 
This principle simultaneously drives the convergence in the iterates and
the convergence in the objective function values. Moreover, we provide
a unifying view on several inexact line search 
procedures that have been proposed in literature in the context of projected/proximal
gradient-type algorithms clarifying the relationships among them.
We finally remark that,  even under standard differentiability assumptions,
we advance the related state of the art,
since we provide rate of convergence in function values in infinite dimensional setting
and we consider an alternative hypothesis on the metrics apart that of monotonicity
made in \cite{Bon15a,Comb2014}.

%Moreover, the role and the relationship among
%the various line search procedures that have been proposed in literature
%for similar algorithms in different situations is clarified. 

Our contribution is detailed below. We consider the problem
\begin{equation}
\tag{P}
\minimize{x \in \hh}{f(x) + g(x)},
\end{equation}
where, $\hh$ is a real Hilbert space and
\begin{description}
\item[H1] $f\colon \hh \to \RX$ and $g\colon \hh \to \RX$ 
are proper convex and lower semicontinuous functions 
with $\dom g \subset \dom f$;
%\footnote{This condition will be relaxed later on.}
\item[H2] $f$ is G\^ateaux differentiable on $\dom g$ and
$\nabla f$
is uniformly continuous on any weakly compact subset of $\dom g$.
\end{description}

For that problem, we study the following 
%variable metric forward-backward splitting 
algorithm \cite{Comb2014}.
Let $x_0 \in \dom g$ and set
\begin{equation}
\tag{VM-FBS}
\begin{array}{l}
\text{for}\;k=0,1,\ldots\\[0.7ex]
\left\lfloor
\begin{array}{l}
\text{choose $\gamma_k \in \RPP$}\\[0.7ex]
y_k = \prox^k_{\gamma_k g}(x_k  - \gamma_k \nabla^k f(x_k))\\[0.7ex]
\text{choose $\lambda_k \in \left]0,1\right]$}\\[0.7ex]
x_{k+1} =  x_k + \lambda_k (y_k - x_k),\\[0.7ex]
\end{array}
\right.\\
\end{array}
\end{equation}
%\begin{equation}
%\tag{VM-FBS}
%\left|
%\begin{aligned}
%&\hspace{-4ex}\text{choose $(\gamma_k,\lambda_k) \in \RPP^2$}\\
%y_k &= \prox^k_{\gamma_k g}(x_k  - \gamma_k \nabla^k f(x_k))\\
%x_{k+1} &=  x_k + \lambda_k (y_k - x_k),
%\end{aligned}
%\right.
%\end{equation}
where $\nabla^k$  and $\prox^k$ denote the gradient operator and 
 the proximity operator with respect to a given family of scalar products
 as specified by the following assumption.
 \smallskip
 \begin{description}[itemsep=0mm]
\item[H3]
${(\scal{\cdot}{\cdot}_k)}_{k \in \N}$ is a sequence of possibly
varying scalar products (metrics) on $\hh$, with induced norms
 ${(\norm{\cdot}_k)}_{k \in \N}$  
 %$(\nu,\mu) \in \RP^2$, 
 and associated positive operators ${(W_k)}_{k \in \N}$
 (i.e., for every $k \in \N$, $W_k\colon \hh \to \hh$ is such that
 $\scal{\cdot}{\cdot}_k = \scal{\cdot}{W_k\cdot}$),
%\footnote{i.e., $\scal{\cdot}{\cdot}_k = \scal{\cdot}{W_k\cdot}$.
%Condition \eqref{eq:20170223a} states that the eigenvalues of 
%${(W_k)}_{k \in \N}$ are bounded from below away from zero
%and bounded from above.} 
 and
\begin{equation}
\label{eq:20170223a}
\exists\, (\nu,\mu) \in \R^2,\quad 0< \nu \leq \mu,\qquad (\forall\, k \in \N)\quad\ 
 \nu \norm{\cdot}^2 \leq \norm{\cdot}^2_k  \leq \mu\norm{\cdot}^2.
\end{equation}
\end{description}
 Moreover, since we are dropping out the assumption of Lipschitz continuity of $\nabla f$, 
 we rely on inexact line search methods for determining
 the parameters $\gamma_k$ and $\lambda_k$ in (VM-FBS).
We set, for every $k \in \N$ and every $x \in \dom g$, $\gamma>0$, and 
$\lambda \in \left]0,1\right]$
\begin{equation*}
J_k(x, \gamma, \lambda) 
= x + \lambda \big( \prox^k_{\gamma g}(x  - \gamma \nabla^k f(x)) - x \big),
\end{equation*}
so that $x_{k+1} = J_k(x_k, \gamma_k, \lambda_k) $.
Then, the gradient descent \emph{stepsizes} ${(\gamma_k)}_{k \in \N}$
and the \emph{relaxation} parameters ${(\lambda_k)}_{k \in \N}$ are 
chosen according
to one of the following rules: 
\smallskip
\begin{enumerate}[{\rm LS$1$}]
\item\label{LS1}
$0< \inf_{k \in \N} \lambda_k \leq \sup_{k \in \N} \lambda_k \leq 1$.
Let $\delta,\theta \in \left]0,1\right[$, $ \bar{\gamma}>0$,
and, $\forall\, k \in \N$,
\begin{equation*}
\begin{aligned}
\gamma_k &= \max\Big\{ \gamma \in \RPP \,\big\vert\,  
(\exists\, i \in \N)(\gamma = \bar{\gamma} \theta^i) \\
\qquad &  f(J_k(x_k,\gamma,\lambda_k)) - f(x_k) - \scal{J_k(x_k,\gamma,\lambda_k) -x_k}{\nabla f(x_k)} 
\leq \frac{\delta}{\gamma \lambda_k} \norm{J_k(x_k,\gamma,\lambda_k) - x_k}_k^2 \Big\}.
\end{aligned}
\end{equation*}
\item\label{LS2}
 $0< \inf_{k \in \N} \gamma_k \leq \sup_{k \in \N} \gamma_k <+\infty$.
Let $\delta,\theta \in \left]0,1\right[$, $\bar{\lambda} \in \left]0,1\right]$,
and,  $\forall\, k \in \N$,
\begin{equation*}
\begin{aligned}
%\label{eq:FBS}
\lambda_k &= \max\Big\{ \lambda \in \RPP \,\big\vert\,  
(\exists\, i \in \N)(\lambda = \bar{\lambda} \theta^i) \\
&\quad  f(J_k(x_k,\gamma_k,\lambda)) - f(x_k)
 - \scal{J_k(x_k,\gamma_k,\lambda) -x_k}{\nabla f(x_k)}
\leq \frac{\delta}{\gamma_k \lambda} \norm{J_k(x_k,\gamma_k,\lambda) - x_k}_k^2 \Big\}.
\end{aligned}
\end{equation*}
\item\label{LS4}
 $0< \inf_{k \in \N} \gamma_k \leq \sup_{k \in \N} \gamma_k <+\infty$.
Let $\theta, \delta \in \left]0,1\right[$, $\bar{\lambda} \in \left]0,1\right]$,
and, $\forall\, k \in \N$,
\begin{align*}
\lambda_k &= \max\Big\{ \lambda \in \RPP \,\big\vert\,  
(\exists\, i \in \N)(\lambda = \bar{\lambda} \theta^i)\\
& (f+g)(J_k(x_k,\gamma_k,\lambda)) - (f+g)(x_k) 
\leq (1-\delta)\lambda \big(g(y_k) - g(x_k) 
+ \scal{y_k -x_k}{\nabla f(x_k)} \big)\Big\}.
\end{align*}
\item\label{LS3}
$0< \inf_{k \in \N} \lambda_k \leq \sup_{k \in \N} \lambda_k \leq 1$.
Let $\delta,\theta \in \left]0,1\right[$, $\bar{\gamma}>0$,
and,  $\forall\, k \in \N$,
\begin{align*}
\gamma_k &= \max\Big\{ \gamma \in \RPP \,\big\vert\,  (\exists\, i \in \N)(\gamma 
= \bar{\gamma} \theta^i) \\
&\hspace{9ex}
\big\lVert \nabla^k f(J_k(x_k,\gamma,\lambda_k)) - \nabla^k f(x_k)\big\rVert_k 
\leq \frac{\delta}{\gamma \lambda_k} 
\norm{J_k(x_k,\gamma, \lambda_k) - x_k}_k\Big\}.
\end{align*}
%\item\label{LS4}
%If $\nabla f$ is Lipschitz continuous on $\dom g$,
%$0< \inf_{k \in \N} \lambda_k \leq \sup_{k \in \N} \lambda_k \leq 1$,
%$0< \inf_{k \in \N} \gamma_k \leq \sup_{k \in \N} \gamma_k <+\infty$, 
%and  $\sup_{k \in \N} \gamma_k \lambda_k < 2/L$.
\end{enumerate}

We remark that  \ref{LS1} and \ref{LS3}
search for an appropriate stepsize parameter $\gamma_k$ before setting $y_k$,
and choose a priori the relaxation parameters $\lambda_k$'s;  while
\ref{LS2} and \ref{LS4}, after computing $y_k$ with an a priori choice of $\gamma_k$, 
search for a suitable relaxation parameter $\lambda_k$. Note that \ref{LS1} and \ref{LS2}
ask for the descent lemma (Fact~\ref{fact_desclem}) for $f$ to be locally satisfied, and
 \ref{LS3} attempts to locally fulfill a Lipschitz condition for $\nabla f$.

Our analysis guarantees that, under the 
mild hypotheses {\bf H1}--{\bf H3} and additional suitable assumptions
on the metrics --- either {\bf H4} or {\bf H5} in Section~\ref{sec:convthms} --- (which are in line with those of \cite{Bon15a,Bon15,Comb2014}), each of the
line search rule above
makes algorithm (VM-FBS) capable of 
generating a minimizing sequence for $f+g$ 
 that also weakly converges to a solution of problem (P).
More precisely, denoting by $S_*$ the set of solutions of (P),
we prove that
\begin{itemize}
\item if $S_*= \varnothing$, then $(f+g)(x_k) \to \inf_{\hh}(f+g)$ and $\norm{x_k}\to +\infty$.
\item if $S_*\neq \varnothing$,  then
 $(x_{k})_{k \in \N}$ and $(y_{k})_{k \in \N}$ weakly converge to the same point
in $S^*$ and $(f+g)(y_k) \to \inf_\hh (f+g)$;
if, in addition, $\nabla f$ is Lipschitz continuous on the weakly compact subsets of $\dom g$,  
then $(\gamma_k)_{k \in \N}$ and 
$(\lambda_k)_{k \in \N}$ are
bounded away from zero and 
$\big( (f+g)(x_k) - \inf_{\hh}(f+g)\big) = o(1/k)$.
\end{itemize}
As a consequence, the above conclusions are also valid when
$\nabla f$ is $L$-Lipschitz continuous on $\dom g$
and the sequences ${(\gamma_k)}_{k \in \N}$ and  ${(\lambda_k)}_{k \in \N}$ are
chosen a priori (without backtracking) provided that 
they are bounded away from zero and  $\sup_{k \in \N} \gamma_k \lambda_k/\nu_k <2/L$,
 $\nu_k$ being the minimum eigenvalue of the metric $\scal{\cdot}{\cdot}_k$.
This result is not covered by the convergence analysis in 
\cite{Comb2014,Smms05,Comb15},
since they are based on the theory of fixed point algorithms for averaged nonexpansive
operators and the Baillon-Haddad theorem \cite{Bau10}, which requires 
$\nabla f$ to have full domain. Moreover, a coupling of 
the parameters $\gamma_k$, $\lambda_k$,
and $\nu_k$
is disclosed which somehow complements the analogue result in \cite{Comb15}. 

Finally we show that the above results hold also for  
 general placements of the domains of $f$ and $g$ 
 (thus even if $\dom g \not\subset \dom f$) at the cost of
 requiring coercivity-type conditions and
 adding a further line search procedure 
 that carry the iterates inside the domain of $f$ before executing any of \ref{LS1}--\ref{LS3}.
 This further generalization allows to treat, e.g., linear inverse problems with Poisson noise, that
 requires smooth terms of divergence type.

\subsection{Comparison with related works}

In  literature concerning the forward-backward algorithm,
the problem of removing the requirement of the Lipschitz continuous
 gradient for the smooth part 
has been receiving growing attention during the last years.
Currently there are two streams of research on this issue.
The first one focuses on a restricted class of smooth functions 
that do not enjoy Lipschitz continuous gradient, but possess other special properties. 
The work
\cite{Tra15} belongs to this kind of studies:
for $\hh$ finite dimensional, it analyses the case that 
$f$ is a smooth self-concordant function and addresses
both global and local convergence.
The second research line considers a wide class of smooth functions
(e.g., continuously differentiable) and introduce line searches 
 to determine the parameters of the algorithm.
Our work is within this stream. In the following we discuss two aspects.

\subsubsection*{The forward-backward algorithm without the Lipschitz assumption}

The literature on the forward-backward algorithm in the absence of
the Lipschitz assumption is scarse.
The pioneering work by Tseng and Yun \cite{Tseng09} is the 
first that considers a 
variable metric forward-backward algorithm 
%for a problem
%of type (P), 
in finite dimensional spaces, where the smooth part is only continuously differentiable (possibly non-convex). 
They proposes a general Armjio-type line search rule and prove that 
cluster points of the generated sequence are stationary points. 
Special instances of this general line search are also employed in the
recent works \cite{Cruz15,Bon15a} which advances the theory for the convex case
by addressing global convergence of the iterates and rate of convergence
in function values without the Lipschitz assumption. 
%under additional assumptions. 
However, these studies
are not completely satisfying since in
\cite{Cruz15} the proposed line searches are not quite suitable for the forward-backward algorithm 
(as we discuss below), the $o(1/k)$ 
convergence rate in function values is obtained only in finite dimension,
and the differentiability assumptions are 
not completely relaxed,
so that, e.g., functions of divergent type may remain out of scope;
%stronger than necessary; 
while  in 
\cite{Bon15a} the analysis is conducted in finite dimensional spaces and 
demands $\dom g$ to be closed and still the Lipschitz assumption for the $O(1/k)$ rate of convergence
in function values. 
 On the other hand, 
 the special case of the \emph{gradient projection method} \cite{Gol64,Lev66,Ius03} have been studied for long time
%under the only hypothesis of continuous differentiability of the objective function
by requiring just the continuity of the gradient of the objective function and
% without the Lipschitz continuity assumption and
 using different types of line searches for determining the step lengths  \cite{Ber76,Ber95,Cal87,Gaf82,McC72}. 
In particular, for the convex and finite dimensional case, \cite{Ius03}
proves convergence of the iterates 
%for a continuously differentiable objective function
%(without assuming boundedness of the sublevel sets) 
using two types of Armijo
line searches, while in the recent \cite{Bon15},
%and using an Armijo line search along a feasible direction,
both convergence of the iterates and a $O(1/k)$
convergence rate in function values are proved 
for the variable metric (scaled) version, assuming coercivity of the objective function and
local Lipschitz continuity of the gradient.
% of the objective function.
% with Lipschitz constant, say $L$.
%In such hypothesis state of the art literature establishes 
%convergence of the iterates
% as well as $o(1/k)$ convergence rate in the objective function values, provided 
% the stepsizes are chosen in the interval $\left]0,2/L\right[$ \cite{Smms05,Dav2015}.
 %sparse coding with divergence type data fit term \cite{}.

\subsubsection*{Line search methods for proximal gradient-type methods}

Line search methods for gradient-type algorithms date several decades 
ago \cite{Ber76,Cal87,Gaf82,Gaf84}.
In the following we position \ref{LS1}--\ref{LS3} 
with respect to the more recent literature.
%especially in the context of convex problems.
%and variable metric methods.
We notice that, when  $g$ is zero, \ref{LS1}, \ref{LS2},
and \ref{LS4} reduce to the classical Armijo line search
along the steepest descent direction \cite{Ber95,Bur95,Noc06}.
Moreover, when
$g$ is the indicator of a closed convex set, \ref{LS4}
reduces to the  Armijo line search along the feasible direction 
commonly used in gradient projection 
methods \cite{Ber95,Bon15,Ius03}.
%The form 
\ref{LS1} (with $\delta=1/2$ and $\lambda_k \equiv 1$) has been proposed
for the first time 
in \cite{Tebu09}, where the authors provide the rate of convergence in function values
of the forward-backward algorithm with no relaxation 
%(the\emph{Iterative Soft Thresholding Algorithm}), 
 under the assumption that $\nabla f$ is everywhere defined
and globally Lipschitz continuous. In that case the line search 
%\ref{LS1} 
was
introduced to cope with situations in which the Lipschitz constant of $\nabla f$
was unknown or expensive to compute. 
%and determines stepsizes $\gamma_k$ such that 
%$\inf_{k \in \N} \gamma_k \geq \theta/L$,
%being $L$ the Lipschitz constant of $\nabla f$.
\ref{LS4} (with $\gamma_k\equiv 1$) is a special instance of the general Armijio rule proposed
by Tseng and Yun \cite{Tseng09} --- which we do not explicitly treat here, but is encompassed by our analysis  (see Remark~\ref{eq:rmk20160409}).
\ref{LS4} has also
 been employed  in \cite{Lee14}
within a proximal Newton-type method for convex minimization problems 
of type (P) in finite dimensional spaces and  under Lipschitz assumption.
We recall that (VM-FBS) 
can be seen as a proximal quasi-Newton method: indeed the $y_k$ 
can be equivalently computed as
%by means of the following minimization subproblem
\begin{equation*}
y_k = \argmin_{z \in \hh} \scal{z - x_k}{\nabla f(x_k)} + \frac{1}{2 \gamma_k} \scal{z - x_k}{W_k (z-x_k))} + g(z).
\end{equation*}
%where $W_k$ is the positive definite operator associated to the metric $\scal{\cdot}{\cdot}_k$.
Unfortunately the proof of global convergence in \cite[Theorem~3.1]{Lee14} is not correct. 
There, it is only proved
 the descent property and that $x_{k+1} - x_k \to 0$,
and mistakenly infered from this that $(x_k)_{k \in \N}$ is convergent.
%Indeed, it is also anomalous the lack of
% assumptions on the metric operators $W_k$,
%apart uniform boundedness, 
%which contrasts with other studies 
%of the same algorithm (or its special cases) that, in order to ensure global convergence, 
%demand 
%for the $W_k$'s a monotonicity condition \cite{Bon15a,Comb2014},
%or a majorization condition \cite{Cho14}, or a kind of control on the asymptotic behavior of the
%corresponding eigenvalues \cite{Bon15,Fra15}. 
Finally, \ref{LS3} was originally employed 
by Tseng in \cite{Tseng00} for the more general problem of finding zeros of the sum of 
two maximal monotone operators. This line search rule has been also recently studied in \cite{Cruz15} (with $\delta \leq 1/2$) in conjunction with
the (stationary metric) forward-backward splitting algorithm for convex minimization problems
without the assumption of Lipschitz continuous gradient.
However, we stress that procedure \ref{LS3}, at each iteration, 
calls for multiple evaluations  of the gradient 
--- a fact that may lead to significantly increase the computational cost
of the algorithm ---
 and, more importantly
 %, when $\nabla f$ is Lipschitz continuous, 
 it may determine shorter stepsizes than those computed
 by the other line search rules
% by the state of the art convergence theory with usual assumptions
 % standard theory of convergence with the Lipschitz continuity assumption  
 (see Remarks~\ref{rmk:20160114b} and \ref{rmk:20160110a}).
In this regard, we note that
the procedure proposed in \cite{Tseng00}
is designed for general Lipschitz (or even continuous) monotone operators,
not specifically for gradient operators. By contrast, 
\ref{LS1}--\ref{LS4}
seem more appropriate to exploit the fact that we are dealing with
%the properties of 
gradient operators:
%since 
we demonstrate indeed that they provide 
larger stepsizes that are consistent with those permitted under the standard Lipschitz
assumptions
%larger stepsizes that are consistent with those permitted in the standard setting
(this issue parallels 
that between Lipschitz continuity and cocoercivity).
%In the end, we stress that the first three line search procedures above constitute 
%a full novelty in the context of the forward-backward algorithm 
%without the assumption of Lipschitz-continuous gradient.
In \cite{Cruz15} a further line search is also analyzed
which is in between \ref{LS2} and \ref{LS4} with $\delta=1/2$, 
but again
 it leads to determine reduced step lengths
(see Remark~\ref{eq:rmk20160409}).

\subsection{Outline of the paper}
Section~\ref{sec:definition} contains notations and basic concepts and facts. 
In Section~\ref{sec:convanal} we first give preliminary results
concerning  the differentiability assumptions and the well-posedness of 
the line searches \ref{LS1}--\ref{LS3}; then we present an
abstract principle which is at the basis of 
the global convergence properties of variable metric descent algorithms;
and finally we study the convergence of 
algorithm (VM-FBS) in conjuction with the proposed line search
procedures.  Section~\ref{sec:gendomains} 
 shows that
the convergence results can be extended to situations in which the domain
of $g$ is not contained in the domain of $f$,
relaxing one requirement in {\bf H1}.
Finally, in Section~\ref{sec:app} we present examples of 
problems of type $(P)$, where the gradient of the smooth part is 
not Lipschitz continuous, that can be tackled by
the proposed algorithm.

\section{Basic definitions and facts}
\label{sec:definition}

Throughout the paper the notation we employ is standard 
and as in \cite{Livre1}.
We assume that $\hh$
is a real Hilbert space with scalar product $\scal{\cdot}{\cdot}$ 
and norm  $\norm{\cdot}$. 
If $\scal{\cdot}{\cdot}_1$ is another scalar product on $\hh$,
its \emph{associated positive operator}
is $V_1\colon \hh \to \hh$ 
such that, for every $(x,y) \in \hh^2$, $\scal{x}{y}_1 = \scal{x}{V_1 y}$.
%Strong and weak convergence
%of a sequence $(x_k)_{k \in \N}$  in $\hh$ towards a point $\bar{x} \in \hh$ is denoted by $x_k \to \bar{x}$
%and $x_k \rightharpoonup \bar{x}$ respectively.
We 
set $\RP = \left[0,+\infty\right[$ and 
$\RPP = \left]0,+\infty\right[$ and we denote by $\ell^1_+$
the set of summable sequences in $\RP$. Moreover, 
for every $(x,y) \in \hh^2$, we set
$[x,y] = \{x + t(y-x)  \,\vert\, t \in [0,1]\}$.
%For every function $h\colon \hh \to \RX$ we denote by 
%$\argmin_\hh h$ either the set of minimizers of $h$ or the minimizer itself or 
%depending on whether $h$ have many minimizers or just one
Let $h\colon \hh \to \RX$ be a proper function.
We set $\argmin_\hh h = \{x \in \hh~\vert~ h(x) = \inf_\hh h\}$,
and when it is a singleton, its unique element, by an abuse of notation,
is still denoted by $\argmin_\hh h$. 
%We denote by $\Gamma_0(\hh)$
%the set of functions $h\colon \hh \to \RX$ that are  proper, convex, and lower semicontinuous. 

We recall two fundamental facts about monotone sequences 
and Fej\'er sequences.

\smallskip
\begin{fact}[{\cite[Lemma~3]{Dav2015}}]
\label{l:20160110b}
Let ${(\rho_k)}_{k \in \N} \in \RP^{\N}$ and 
${(\alpha_k)}_{k \in \N} \in \RPP^{\N}$ be such that
\begin{equation*}
(\forall\, k \in \N)\ \  
\rho_{k+1} \leq \rho_{k} \quad\text{and}\quad   \sum\nolimits_{k \in \N} \alpha_k \rho_k <+\infty.
\end{equation*}
Then, for every $k \in \N$,
$\rho_k \leq \big(\sum_{i=0}^k \alpha_i\big)^{-1} \sum_{k \in \N} \alpha_k \rho_k$
and $\rho_k = o\big(1/\sum_{i=\lceil k/2 \rceil}^k \alpha_i \big)$.
%\begin{equation*}
%(\forall\, k \in \N)\quad \rho_k \leq \frac{1}{\sum_{i=0}^k \alpha_i} \sum_{k \in \N} \alpha_k \rho_k
%\quad\text{and}\quad \rho_k = o\bigg(\frac{1}{\sum_{i=\lceil k/2 \rceil}^k \alpha_i} \bigg).
%\end{equation*}
In particular, if $(\alpha_k)_{k \in \N} \notin \ell^1_+$, then $\rho_k \to 0$, and if
$\inf_{k \in \N} \alpha_k>0$, then $\rho_k = o(1/k)$.
\end{fact}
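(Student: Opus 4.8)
The plan is to exploit the monotonicity of $(\rho_k)_{k\in\N}$ to ``pull out'' the factor $\rho_k$ from suitable partial sums of the convergent series $\sum_{k\in\N}\alpha_k\rho_k$, and then read off each of the stated conclusions. All the $\alpha_i$ being strictly positive, every partial sum $\sum_{i=0}^k\alpha_i$ is positive (and the sequence of such partial sums is strictly increasing), so the divisions below are legitimate.

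First I would prove the explicit bound. Fix $k\in\N$. Since $(\rho_i)_{i\in\N}$ is nonincreasing, $\rho_i\ge\rho_k$ for every $i\le k$, whence
\begin{equation*}
\Big(\sum\nolimits_{i=0}^k\alpha_i\Big)\rho_k=\sum\nolimits_{i=0}^k\alpha_i\rho_k\le\sum\nolimits_{i=0}^k\alpha_i\rho_i\le\sum\nolimits_{i\in\N}\alpha_i\rho_i<+\infty,
\end{equation*}
and dividing by $\sum_{i=0}^k\alpha_i>0$ gives the first claim. In particular, if $(\alpha_k)_{k\in\N}\notin\ell^1_+$, then $\sum_{i=0}^k\alpha_i\to+\infty$, so the right-hand side tends to $0$ and hence $\rho_k\to0$.

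Next I would derive the $o$-estimate. Set $\varepsilon_k=\sum_{i\ge\lceil k/2\rceil}\alpha_i\rho_i$; since the series $\sum_{i\in\N}\alpha_i\rho_i$ converges, its tails satisfy $\varepsilon_k\to0$. Using again $\rho_i\ge\rho_k$ for $\lceil k/2\rceil\le i\le k$,
\begin{equation*}
\Big(\sum\nolimits_{i=\lceil k/2\rceil}^k\alpha_i\Big)\rho_k\le\sum\nolimits_{i=\lceil k/2\rceil}^k\alpha_i\rho_i\le\varepsilon_k\xrightarrow[k\to\infty]{}0,
\end{equation*}
which is exactly $\rho_k=o\big(1/\sum_{i=\lceil k/2\rceil}^k\alpha_i\big)$. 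Finally, if $a:=\inf_{k\in\N}\alpha_k>0$, then the number of terms in the last sum is $k-\lceil k/2\rceil+1\ge k/2$, so $\sum_{i=\lceil k/2\rceil}^k\alpha_i\ge ak/2$ and therefore $\rho_k=o(1/k)$.

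There is no genuine obstacle in this argument; the only points requiring a little care are the strict positivity of the $\alpha_i$ (so that the denominators are nonzero and the partial sums $\sum_{i=0}^k\alpha_i$ are increasing), the use of convergence of $\sum_{i\in\N}\alpha_i\rho_i$ to control its tails $\varepsilon_k$, and the elementary count $k-\lceil k/2\rceil+1\ge k/2$ invoked in the last step.
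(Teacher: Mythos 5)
Your argument is correct, and it is the standard one: the paper states this result as a Fact cited from \cite[Lemma~3]{Dav2015} without reproducing a proof, and your chain of estimates (pulling $\rho_k$ out of the partial sums by monotonicity, using the vanishing tails of the convergent series for the $o$-estimate, and the count $k-\lceil k/2\rceil+1\ge k/2$ for the $o(1/k)$ rate) is exactly the argument behind that lemma. Nothing is missing.
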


%\smallskip
%\begin{fact}[{\cite[Lemma~3]{Dav2015}}]
%\label{l:20160110b}
%Let $(\rho_k)_{k \in \N}$ be a sequence  in $\RP$, 
%such that for every $k \in \N$, $\rho_{k+1} \leq \rho_{k}$.
%Let $(\alpha_k)_{k \in \N}$ be a sequence in $\RPP$ and suppose that
%$\sum_{k \in \N} \alpha_k \rho_k <+\infty$.
%%\begin{equation*}
%%\sum_{k \in \N} \alpha_k \rho_k <+\infty\quad\text{and}\quad(\forall\, k \in \N)\ \  
%%\rho_{k+1} \leq \rho_{k}.
%%\end{equation*}
%%Then, for every $k \in \N$,
%%$\rho_k \leq \big(\sum_{i=0}^k \alpha_i\big)^{-1} \sum_{k \in \N} \alpha_k \rho_k$
%%and $\rho_k = o\big(1/\sum_{i=\lceil k/2 \rceil}^k \alpha_i \big)$.
%%\begin{equation*}
%%(\forall\, k \in \N)\quad \rho_k \leq \frac{1}{\sum_{i=0}^k \alpha_i} \sum_{k \in \N} \alpha_k \rho_k
%%\quad\text{and}\quad \rho_k = o\bigg(\frac{1}{\sum_{i=\lceil k/2 \rceil}^k \alpha_i} \bigg).
%%\end{equation*}
%If $(\alpha_k)_{k \in \N} \notin \ell^1_+$, then $\rho_k \to 0$, and if
%$\inf_{k \in \N} \alpha_k>0$, then $\rho_k = o(1/k)$.
%\end{fact}

\smallskip
\begin{definition}[{\cite[Definition~3.1(ii)]{Comb2013}}]
\label{def:fejer}
Let ${(\abs{\cdot}_k)}_{k \in \N}$ be a sequence of Hilbert norms on $\hh$
such that, for some  $\nu>0$, $\nu \lVert \cdot\rVert^2 \leq \lvert \cdot\rvert ^2_k$,
for every $k \in \N$.
Let $S \subset \hh$ be a nonempty set.
A sequence ${(x_k)}_{k \in \N}$ in $\hh$ is a 
\emph{quasi-Fej\'er  sequence with respect to $S$
relative to ${(\abs{\cdot}_k)}_{k \in \N}$}  if
there exist  ${(\varepsilon_k)}_{k \in \N}$
and ${(\eta_k)}_{k \in \N}$ in  $\ell_+^1$ such that
\begin{equation}
\label{eq:fejerprop}
(\forall\, x \in S)(\forall\, k \in \N)\qquad \abs{x_{k+1} - x}_{k+1}^2 
\leq (1+\eta_k) \abs{x_k - x}_k^2 + \varepsilon_k.
\end{equation}
\end{definition}
%\vspace{-3ex}
\begin{fact}[{\cite[Lemma~2.3, Proposition~3.2, and Theorem~3.3]{Comb2013}}]
%\begin{fact}[{\cite{Comb2013}}]
\label{p:fejer}
Let ${(\abs{\cdot}_k)}_{k \in \N}$ be a sequence of Hilbert norms on $\hh$
with associated positive operators $(V_k)_{k \in \N}$.
Suppose that, for some $\nu>0$,
%there exists $\nu>0$ such that,
$\nu \norm{\cdot}^2 \leq \abs{\cdot}^2_k$ for every $k \in \N$.
Consider the following statements.
\begin{enumerate}[$(a)$]
\item\label{p:fejer_a} There exists  $(\eta_k)_{k \in \N} \in \ell_+^1$ 
such that, for every $k \in \N$, $\abs{\cdot}^2_{k+1} \leq (1+\eta_k) \abs{\cdot}^2_k$.
\item\label{p:fejer_b} There exists a positive operator $V$ such that,
for every $x \in \hh$, $V_k x \to V x$.
%\footnote{
%Note that the uniform boundedness principle ensures that
%$\sup_{k \in \N} \norm{V_k}<+\infty$ \cite[Theorem~VI.1]{Ree80}, hence,
%for some $\mu>0$, it holds $\abs{\cdot}_k^2 
%\leq \mu \norm{\cdot}^2$, for every $k \in \N$.
%}
\end{enumerate}
Then \ref{p:fejer_a} $\Rightarrow$ \ref{p:fejer_b}.\!\!\!
\footnote{
The condition $\sup_{k \in \N} \norm{V_k}<+\infty$ required in 
\cite{Comb2013} is not necessary, since it is a consequence of 
$\abs{\cdot}^2_{k+1} \leq (1+\eta_k) \abs{\cdot}^2_k$
and $\prod_{k=0}^{+\infty}(1+\eta_k)<+\infty$.}
%$\sup_{k \in \N} \norm{V_k}<+\infty$
%In \cite{Comb2013}, $\sup_{k \in \N} \norm{V_k}<+\infty$ is also required.
%However, this condition is a consequence of the  fact that
%$\abs{\cdot}^2_{k+1} \leq (1+\eta_k) \abs{\cdot}^2_k$ and that 
%$\prod_{k=0}^{+\infty}(1+\eta_k)<+\infty$.}
Moreover, let $S \subset \hh$ be a nonempty set, and let
$(x_k)_{k \in \N} \in \hh^{\N}$ be a 
quasi Fej\'er sequence with respect to $S$
relative to ${(\abs{\cdot}_k)}_{k \in \N}$.
Then,
 if \ref{p:fejer_b} holds,
\begin{enumerate}[{\rm(i)}]
\item\label{p:fejeri} 
$(x_k)_{k \in \N}$ is bounded and,
for every $y \in S$, ${(\abs{x_k - y}_k)}_{k \in \N}$ is convergent.
%more precisely, for every $x \in S$, 
%$(x_k)_{k \in \N}$ is contained
%in the ball centered at $x$ with radius 
%$\big(\norm{x_0 - x}^2 + \sum_{k=0}^{+\infty} \varepsilon_k\big)^{1/2}$.
\item\label{p:fejerii} 
$(x_k)_{k \in \N}$ is weakly convergent to a point of $S$
if and only if every weak sequential cluster point of $(x_k)_{k \in \N}$ belongs to $S$.
%\item
%Suppose that there exists a summable sequence $(\eta_k)_{k \in \N}$ in $\RP$
%such that, for every $k \in \N$, $\norm{\cdot}^2_{k+1} \leq (1+\eta_k) \norm{\cdot}^2_k$.
%Then there exists a positive operator $W$ such that,
%for every $x \in \hh$, $W_k x \to W x$.
\end{enumerate}
\end{fact}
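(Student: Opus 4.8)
The plan is to establish the four assertions in turn, reducing everything to one renormalization device. Since $\sum_{k}\eta_k<+\infty$, the partial products $\pi_k:=\prod_{j=0}^{k-1}(1+\eta_j)$ increase to some $\pi_\infty\in\left[1,+\infty\right[$. For $\ref{p:fejer_a}\Rightarrow\ref{p:fejer_b}$ I would first rewrite the hypothesis $\abs{\cdot}^2_{k+1}\le(1+\eta_k)\abs{\cdot}^2_k$ as the operator inequality $V_{k+1}\preceq(1+\eta_k)V_k$; iterating from $V_0$ gives $V_k\preceq\pi_k V_0\preceq\pi_\infty V_0$, hence $\sup_{k\in\N}\norm{V_k}\le\pi_\infty\norm{V_0}<+\infty$, which is exactly the footnote. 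Next, set $\widetilde V_k:=\pi_k^{-1}V_k$. Using $\pi_{k+1}=\pi_k(1+\eta_k)$, the same operator inequality yields $\widetilde V_{k+1}\preceq\widetilde V_k$, while $\nu I\preceq V_k$ gives $\widetilde V_k\succeq(\nu/\pi_k)I\succeq(\nu/\pi_\infty)I\succ 0$. Thus $(\widetilde V_k)_{k\in\N}$ is a nonincreasing sequence of bounded self-adjoint operators bounded below, so by the monotone convergence theorem for self-adjoint operators it converges in the strong operator topology to a bounded positive $\widetilde V$; consequently $V_k x=\pi_k\widetilde V_k x\to\pi_\infty\widetilde V x=:Vx$ for every $x\in\hh$. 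Letting $k\to\infty$ in $\scal{x}{\widetilde V_k x}\ge(\nu/\pi_k)\norm{x}^2$ gives $\scal{x}{Vx}\ge\nu\norm{x}^2$, so $V$ is a positive operator and $\ref{p:fejer_b}$ holds.

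For \ref{p:fejeri}, fix $y\in S$ and put $\rho_k:=\abs{x_k-y}_k^2\ge 0$. The quasi-Fej\'er inequality~\eqref{eq:fejerprop} reads $\rho_{k+1}\le(1+\eta_k)\rho_k+\varepsilon_k$. Dividing by $\pi_{k+1}$ and using $(1+\eta_k)/\pi_{k+1}=\pi_k^{-1}$ and $\pi_{k+1}\ge 1$, I obtain $\widetilde\rho_{k+1}\le\widetilde\rho_k+\widetilde\varepsilon_k$ with $\widetilde\rho_k:=\rho_k/\pi_k$ and $\widetilde\varepsilon_k:=\varepsilon_k/\pi_{k+1}\le\varepsilon_k$, so $(\widetilde\varepsilon_k)_{k\in\N}\in\ell_+^1$. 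Then $\widetilde\rho_k-\sum_{j<k}\widetilde\varepsilon_j$ is nonincreasing and bounded below by $-\sum_{j}\widetilde\varepsilon_j>-\infty$, hence convergent; since $\sum_{j<k}\widetilde\varepsilon_j$ converges, so does $\widetilde\rho_k$, and therefore $\rho_k=\pi_k\widetilde\rho_k$ converges. This proves that $(\abs{x_k-y}_k)_{k\in\N}$ converges for every $y\in S$; and since $\nu\norm{x_k-y}^2\le\rho_k$ stays bounded, $(x_k)_{k\in\N}$ is bounded.

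For \ref{p:fejerii}, the forward implication is immediate. For the converse, boundedness from \ref{p:fejeri} guarantees that $(x_k)_{k\in\N}$ has weak sequential cluster points, each of which lies in $S$ by hypothesis, so it suffices to show there is only one. I would run a variable-metric Opial argument: if $x_{k_n}\rightharpoonup u_1$ and $x_{l_n}\rightharpoonup u_2$ with $u_1,u_2\in S$, expand
\[
\abs{x_k-u_1}_k^2-\abs{x_k-u_2}_k^2=-2\scal{x_k}{V_k(u_1-u_2)}+\scal{u_1}{V_ku_1}-\scal{u_2}{V_ku_2}.
\]
By \ref{p:fejeri} the left-hand side converges to $\ell_1-\ell_2$, where $\ell_i:=\lim_k\abs{x_k-u_i}_k^2$. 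On the right, $\scal{u_i}{V_ku_i}\to\scal{u_i}{Vu_i}$ by \ref{p:fejer_b}, and $\scal{x_k}{V_k(u_1-u_2)}=\scal{x_k}{V(u_1-u_2)}+\scal{x_k}{(V_k-V)(u_1-u_2)}$, where the last term vanishes since $\norm{(V_k-V)(u_1-u_2)}\to 0$ and $(x_k)_{k\in\N}$ is bounded. Evaluating the resulting identity along $(k_n)$ and along $(l_n)$ and subtracting cancels $\ell_1-\ell_2$, $\scal{u_1}{Vu_1}$ and $\scal{u_2}{Vu_2}$, leaving $\scal{u_1-u_2}{V(u_1-u_2)}=0$; since $V\succeq\nu I$ this forces $u_1=u_2$. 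Hence $(x_k)_{k\in\N}$ has a unique weak sequential cluster point and, being bounded in a Hilbert space, converges weakly to it, a point of $S$.

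The step I expect to be the crux is the passage, in $\ref{p:fejer_a}\Rightarrow\ref{p:fejer_b}$, from the easy observation that the renormalized quadratic forms $\scal{x}{\widetilde V_kx}$ decrease (hence converge) to genuine \emph{strong} operator convergence $V_kx\to Vx$: this is what the monotone convergence theorem for self-adjoint operators buys us, and it is also what makes the Opial argument in \ref{p:fejerii} go through, since there one needs $(V_k-V)z\to 0$ in norm --- not merely weakly --- to kill the cross term $\scal{x_k}{(V_k-V)(u_1-u_2)}$ against the bounded but not norm-convergent sequence $(x_k)_{k\in\N}$.
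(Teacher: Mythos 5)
The paper does not prove this Fact---it is quoted verbatim from \cite{Comb2013}---so the only comparison available is with the cited source, and your argument is correct and essentially reproduces the one given there: renormalize by the partial products $\prod_{j<k}(1+\eta_j)$, invoke the monotone (Vigier) convergence theorem for self-adjoint operators to obtain strong convergence $V_k\to V$, use a telescoping/summability argument for \ref{p:fejeri}, and a variable-metric Opial argument for \ref{p:fejerii}; your first paragraph also supplies exactly the justification of the paper's footnote that $\sup_{k\in\N}\norm{V_k}<+\infty$ is automatic under \ref{p:fejer_a}. The only cosmetic point is that in \ref{p:fejeri}--\ref{p:fejerii} only \ref{p:fejer_b} is assumed, so the lower bound $\scal{x}{Vx}\geq\nu\norm{x}^2$ used at the end of \ref{p:fejerii} should be rederived by letting $k\to\infty$ in $\nu\norm{x}^2\leq\scal{x}{V_kx}$ rather than imported from the proof of \ref{p:fejer_a}$\Rightarrow$\ref{p:fejer_b}---which is immediate.
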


%\smallskip
%Let $h\colon \hh \to \RX$ be a proper function.
%%The set $\dom h = \{x \in \hh \,\vert\, h(x)<+\infty\}$ is the
%%\emph{domain} of $h$, and
%%$h$ is said to be \emph{proper} if $\dom h \neq \varnothing$.
%We set $\argmin_\hh h = \{x \in \hh~\vert~ h(x) = \inf_\hh h\}$,
%and when it is a singleton, its unique element, by an abuse of notation,
%is still denoted by $\argmin_\hh h$. 
%We denote by $\Gamma_0(\hh)$
%the set of functions $h\colon \hh \to \RX$ that are  proper, convex, and lower semicontinuous. 
%%If $h \in \Gamma_0(\hh)$
%%and $x \in \hh$
%%the \emph{subdifferential of $h$ at $x$} is the set
%%$\partial h(x) = \{u \in \hh \,\vert\, (\forall\, y \in \hh)\ h(y) \geq h(x) + \scal{y - x}{u}\}$.

\begin{definition}
Let $\genf\colon \hh \to \RX$
be  proper and convex and let $x \in \dom \genf$.
The function $\genf$ is said to be \emph{G\^ateaux differentiable} at $x \in \dom \genf$
if there exists $u \in \hh$, such that, for every
 $d \in \hh$, $\lim_{t\to 0} (\genf(x+td) - \genf(x))/t = \scal{d}{u}$
(see \cite[p.~243]{Livre1}); 
 in this case $\partial \genf(x) = \{u\}$ 
and the unique
 element of $\partial \genf(x)$ is denoted by $\nabla \genf(x)$.
Moreover $\genf$ is \emph{G\^ateaux differentiable on
$A \subset \hh$} if it is G\^ateaux differentiable at every point of $A$.
\end{definition}

\begin{remark}
\normalfont
\label{rmk:20160105a}
If  $h \in \Gamma_0(\hh)$ and it is G\^ateaux differentiable at $x \in \dom \genf$, then 
 $x \in  \mathrm{int} \dom \genf$ 
and $h$ is continuous on $\mathrm{int} \dom \genf \subset \dom \partial \genf$ 
\cite{Livre1}.
\end{remark}
\smallskip
\begin{fact}[Descent Lemma]
\label{fact_desclem}
Let $\genf\colon \hh \to \RX$
be  proper and convex. Suppose that
$\genf$ is G\^ateaux differentiable with $L$-Lipschitz continuous gradient 
on the segment $[x,y] \subset \mathrm{int} \dom \genf$.
%Let $\Omega$ 
%be a nonempty open subset of $\hh$ and let $\genf\colon \Omega \to \R$.
%Let $x, y \in \Omega$ be such that $[x,y] \subset \Omega$ and suppose that
%$\genf$ is G\^ateaux differentiable on $[x,y]$ and that $\nabla \genf$ is $L$-Lipschitz
%continuous on $[x,y]$ for some $L \in \RP$. 
Then, we have
$% \begin{equation*}
%\lvert 
\genf(y) - \genf(x) - \scal{y - x}{\nabla \genf(x)} 
%\rvert 
\leq  (L/2) \norm{x - y}^2.
$%\end{equation*}
\end{fact}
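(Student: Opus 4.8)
The plan is to reduce the inequality to a one-dimensional statement via the auxiliary function $\varphi\colon [0,1] \to \R$ defined by $\varphi(t) = \genf(x + t(y-x))$, which is well defined because $[x,y] \subset \inter \dom \genf$. Since $\genf$ is G\^ateaux differentiable at each point of the segment, $\varphi$ is differentiable on $[0,1]$ with $\varphi'(t) = \scal{y-x}{\nabla \genf(x + t(y-x))}$. Because $\nabla \genf$ is $L$-Lipschitz on $[x,y]$, for all $s,t \in [0,1]$ we have $\abs{\varphi'(t) - \varphi'(s)} \leq \norm{y-x}\,\norm{\nabla \genf(x+t(y-x)) - \nabla \genf(x+s(y-x))} \leq L\abs{t-s}\,\norm{y-x}^2$, so $\varphi'$ is (Lipschitz) continuous and $\varphi \in C^1([0,1])$; hence the fundamental theorem of calculus is available.

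Next I would write
\[
\genf(y) - \genf(x) - \scal{y-x}{\nabla \genf(x)} = \varphi(1) - \varphi(0) - \varphi'(0) = \int_0^1 \big(\varphi'(t) - \varphi'(0)\big)\,dt = \int_0^1 \scal{y-x}{\nabla\genf(x+t(y-x)) - \nabla \genf(x)}\,dt,
\]
and then bound the integrand using the Cauchy--Schwarz inequality followed by the Lipschitz estimate: $\scal{y-x}{\nabla\genf(x+t(y-x)) - \nabla\genf(x)} \leq \norm{y-x}\,\norm{\nabla\genf(x+t(y-x)) - \nabla\genf(x)} \leq L t \norm{y-x}^2$. Integrating this over $[0,1]$ produces $(L/2)\norm{y-x}^2$, which is the asserted bound.

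The computation itself is entirely routine; the only point requiring care is the passage from G\^ateaux differentiability along the segment to the integral representation of $\varphi(1)-\varphi(0)$, i.e.\ the legitimacy of the fundamental theorem of calculus for $\varphi$. This is precisely why I first record that $\varphi'$ is continuous (indeed Lipschitz), a fact that follows solely from the hypothesis that $\nabla\genf$ is $L$-Lipschitz on $[x,y]$ together with Cauchy--Schwarz; since $\varphi$ is real-valued, no further regularity discussion is needed and the argument is valid verbatim in the infinite-dimensional setting.
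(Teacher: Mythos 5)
Your proof is correct and is exactly the standard argument: reduce to the scalar function $t\mapsto \genf(x+t(y-x))$, justify the fundamental theorem of calculus via the (Lipschitz) continuity of its derivative, and bound the integrand by Cauchy--Schwarz plus the Lipschitz estimate. The paper states this as a Fact without proof, but the identical integral computation appears verbatim in its proof of Lemma~\ref{l:20160103a}\ref{l:20160103aii}, so there is nothing further to reconcile.
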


Suppose that {\bf H3} holds. Let $k \in \N$ and let
$W_k$ be the positive operator associated to 
 $\scal{\cdot}{\cdot}_k$.
We set 
 $\mu_k = \sup_{\norm{x}= 1} \scal{x}{W_k x}=\norm{W_k} $ and 
$\nu_k = \inf_{\norm{x} = 1} \scal{x}{W_k x} = \lVert W_k^{-1}\rVert^{-1}$, being respectively
the maximum and minimum eigenvalue of $W_k$. Then
 \begin{equation}
\label{eq:norms}
\nu \norm{\cdot}^2\leq \nu_k \norm{\cdot}^2 \leq \norm{\cdot}^2_k 
\leq \mu_k\norm{\cdot}^2 \leq \mu\norm{\cdot}^2.
\end{equation}

Let $\genf \in \Gamma_0(\hh)$ and let $x \in \dom h$. 
We denote by $\partial^k h$ the subdifferential of $h$ at $x$ 
in the metric $\scal{\cdot}{\cdot}_k$
and we have, for every $u \in \hh$,
\begin{equation}
\label{eq:metric_subdiff}
u \in \partial^k \genf(x)\ \Leftrightarrow\ (\forall\, y \in \hh) 
\ \genf(y) \geq \genf(x) + \scal{y - x}{W_k u} 
\ \Leftrightarrow\ W_k u \in \partial \genf(x).
\end{equation}
Moreover, if $\genf$ is G\^ateaux differentiable at $x$ 
we denote by $\nabla^k h$ the gradient of $h$ at $x$ 
in the metric $\scal{\cdot}{\cdot}_k$, and we have
$\nabla \genf(x) = W_k \nabla^k \genf(x)$ and
\begin{equation}
\label{eq:20160204b}
(\forall\, d \in \hh)\quad 
\scal{d}{\nabla \genf (x)} = \lim_{t \to 0}\frac{\genf(x+t d) - \genf(x)}{t} 
= \big\langle d \,|\, \nabla^k \genf(x)\big\rangle_k.
\end{equation}
\smallskip
\begin{fact}
\label{p:20160125f}
Assume that {\bf H3} holds. Let $k \in \N$ and 
let $\nu_k$ and $\mu_k$ be the minimum and maximum eigenvalue
of $W_k$.
Let $\genf \in \Gamma_0(\hh)$ and suppose that
$\genf$ is G\^ateaux differentiable on a set $C \subset \hh$.
Let $x,y \in C$. Then
\begin{equation}
\label{eq:20160125e}
\frac{1}{\sqrt{\mu_k}} \norm{\nabla \genf(x) - \nabla \genf(y)}
\leq \big\lVert \nabla^k \genf(x) - \nabla^k \genf(y)\big\rVert_k
\leq \frac{1}{\sqrt{\nu_k}} \norm{\nabla \genf(x) - \nabla \genf(y)}.
\end{equation}
\end{fact}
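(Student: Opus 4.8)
The plan is to reduce \eqref{eq:20160125e} to a single spectral estimate for the positive operator $W_k$, exploiting the identity $\nabla h = W_k\nabla^k h$ recorded just above \eqref{eq:20160204b}. Concretely, I would set
$u = \nabla^k h(x) - \nabla^k h(y) \in \hh$, so that by linearity of $W_k$ one has $W_k u = \nabla h(x) - \nabla h(y)$. Since $\norm{u}_k^2 = \scal{u}{W_k u}$ by {\bf H3}, the claimed chain becomes
\begin{equation*}
\frac{1}{\sqrt{\mu_k}}\,\norm{W_k u} \;\leq\; \norm{u}_k \;\leq\; \frac{1}{\sqrt{\nu_k}}\,\norm{W_k u},
\end{equation*}
which I will prove for an arbitrary $u \in \hh$, recalling that $W_k$, hence its square root $W_k^{1/2}$ and its inverse $W_k^{-1}$, is bounded, self-adjoint and positive on $\hh$, with $\mu_k = \norm{W_k}$ and $\nu_k = \lVert W_k^{-1}\rVert^{-1}>0$.

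The key step is to pass through $v := W_k^{1/2}u$. Then $\norm{v}^2 = \scal{u}{W_k u} = \norm{u}_k^2$, while $W_k u = W_k^{1/2}v$ gives $\norm{W_k u}^2 = \scal{W_k^{1/2}v}{W_k^{1/2}v} = \scal{v}{W_k v}$. Applying the two-sided eigenvalue bound for $W_k$ (as in \eqref{eq:norms}) to the vector $v$ yields $\nu_k\norm{v}^2 \leq \scal{v}{W_k v} \leq \mu_k\norm{v}^2$, i.e. $\nu_k\norm{u}_k^2 \leq \norm{W_k u}^2 \leq \mu_k\norm{u}_k^2$. Taking square roots and rearranging gives exactly the displayed inequality; substituting back $u = \nabla^k h(x) - \nabla^k h(y)$ and $W_k u = \nabla h(x) - \nabla h(y)$ produces \eqref{eq:20160125e}. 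If one prefers to avoid the spectral square root, the same two bounds follow elementarily: $\norm{u}_k^2 = \scal{W_k^{-1}(W_k u)}{W_k u} \leq \lVert W_k^{-1}\rVert\,\norm{W_k u}^2 = \nu_k^{-1}\norm{W_k u}^2$ by Cauchy--Schwarz, and $\norm{W_k u}^4 = \scal{u}{W_k^2 u}^2 \leq \scal{u}{W_k u}\,\scal{W_k u}{W_k^2 u} \leq \mu_k\,\scal{u}{W_k u}\,\norm{W_k u}^2$ by the Cauchy--Schwarz inequality attached to the positive semidefinite form $(p,q)\mapsto\scal{p}{W_k q}$ (the case $W_k u = 0$ being trivial).

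I do not expect any real obstacle here: this is a soft argument in the spectral calculus of a bounded positive operator. The only point worth keeping in view is that everything goes through verbatim in the possibly infinite-dimensional Hilbert space $\hh$ precisely because {\bf H3} forces $W_k$ to be boundedly invertible ($\nu_k>0$), so that $W_k^{-1}$ and $W_k^{\pm 1/2}$ are everywhere defined and bounded; no compactness or finite-dimensionality is used, consistently with \eqref{eq:norms}.
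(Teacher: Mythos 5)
Your proof is correct: reducing \eqref{eq:20160125e} to the spectral inequality $\nu_k\norm{u}_k^2 \le \norm{W_k u}^2 \le \mu_k\norm{u}_k^2$ for $u=\nabla^k \genf(x)-\nabla^k \genf(y)$, using $W_k u = \nabla \genf(x)-\nabla \genf(y)$ and the bounded invertibility of $W_k$ guaranteed by {\bf H3}, is exactly the right argument, and both your square-root route and the elementary Cauchy--Schwarz variants check out. The paper states this as a Fact without supplying a proof, so there is nothing to compare against; your argument correctly fills in the omitted details.
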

%\begin{proof}
%It follows from \eqref{eq:norms} and \eqref{eq:20160204b} that
%\begin{equation*}
%\begin{aligned}
%\norm{\nabla \genf(x) - \nabla \genf(y)} &= \sup_{\norm{v} \leq 1} 
%\abs{\scal{v}{\nabla \genf(x) - \nabla \genf(y)}}
%= \sup_{\norm{v} \leq 1} 
%\abs{\scal{v}{\nabla^k \genf(x) - \nabla^k \genf(y)}_k}  \\
%&\leq \sup_{\norm{v} \leq 1} 
%\norm{v}_k \norm{\nabla^k \genf(x) - \nabla^k \genf(y)}_k
%\leq \sqrt{\mu_k} \norm{\nabla^k \genf(x) - \nabla^k \genf(y)}_k\\
%&= \sqrt{\mu_k}  \sup_{\norm{v}_k \leq 1} 
%\abs{\scal{v}{\nabla \genf(x) - \nabla \genf(y)}}  
%\leq \sqrt{\mu_k}  \sup_{\norm{v}_k \leq 1} 
%\norm{v} \norm{\nabla \genf(x) - \nabla \genf(y)}  \\
%&\leq \frac{\sqrt{\mu_k}}{\sqrt{\nu_k}} \norm{\nabla \genf(x) - \nabla \genf(y)}.
%\end{aligned}
%\end{equation*}
%\end{proof}

Let $\genf \in \Gamma_0(\hh)$. The \emph{proximity operator of $\genf$} is the map
$\prox_\genf\colon\hh\to \hh$ such that, for every $x \in \hh$,
$\prox_\genf(x) = \argmin_{y \in \hh}
 \genf(y) + (1/2) \norm{y - x}^2$. Moreover
\begin{equation}
\label{eq:20160114a}
(\forall\, x \in \hh)(\forall\, z \in \hh)\qquad z = \prox_\genf x \iff x - z \in \partial \genf(z).
\end{equation}
The following result 
can be partially derived
 from the asymptotic behavior of the resolvent 
of maximal monotone operators 
\cite[Theorem~23.47]{Livre1}. 
We also provide the bound \eqref{eq:20160213e},
by slightly modifying the proof of \cite[Proposition~4.1.5, Chap.~XV]{Livre2}.

\begin{fact}
\label{lem:prox}
Let $\genf \in \Gamma_0(\hh)$ and let $\gamma >0$. 
Then, for every $u \in \dom h^*$, we have
\begin{equation}
\label{eq:20160213e}
(\forall\, x \in \dom \genf)\ \ 
\lVert \prox_{\gamma \genf}(x) - x\rVert^2 
\leq 2 \gamma \Big( \genf(x) + \genf^*(u) - \scal{x}{u} + \gamma \frac{\norm{u}^2}{2} \Big).
\end{equation}
In particular, for every $x \in \dom \genf$, $\prox_{\gamma \genf}(x) \to x$ as $\gamma \to 0^+$.
\end{fact}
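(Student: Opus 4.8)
The plan is to exploit the strong convexity of the proximal objective together with the Fenchel--Young inequality. First I would set $z = \prox_{\gamma \genf}(x)$. Recalling that, by definition, $z$ is the (unique) minimizer of the $(1/\gamma)$-strongly convex function $\psi\colon y \mapsto \genf(y) + (1/2\gamma)\norm{y-x}^2$, comparing $\psi$ at $y$ and at its minimizer $z$ gives $\psi(y) \geq \psi(z) + (1/2\gamma)\norm{y-z}^2$ for every $y$; evaluating this at $y = x$ yields
\[
\genf(x) \geq \genf(z) + \frac{1}{2\gamma}\norm{z-x}^2 + \frac{1}{2\gamma}\norm{x-z}^2 = \genf(z) + \frac{1}{\gamma}\norm{z-x}^2 ,
\]
so in particular $\genf(z)$ is finite, $z \in \dom \genf$, and $\tfrac{1}{\gamma}\norm{z-x}^2 \leq \genf(x) - \genf(z)$. (Equivalently, the same inequality follows from the subgradient characterization in \eqref{eq:20160114a}, which gives $(x-z)/\gamma \in \partial \genf(z)$, tested against $y = x$.)

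Next, fix $u \in \dom \genf^*$ and bound $-\genf(z)$ from above by the Fenchel--Young inequality $\genf(z) \geq \scal{z}{u} - \genf^*(u)$, so that
\[
\frac{1}{\gamma}\norm{z-x}^2 \leq \genf(x) + \genf^*(u) - \scal{z}{u}
= \genf(x) + \genf^*(u) - \scal{x}{u} - \scal{z-x}{u}.
\]
Then I would control the cross term by Cauchy--Schwarz followed by the weighted Young inequality $\abs{\scal{z-x}{u}} \leq (1/2\gamma)\norm{z-x}^2 + (\gamma/2)\norm{u}^2$; absorbing the term $(1/2\gamma)\norm{z-x}^2$ into the left-hand side gives $(1/2\gamma)\norm{z-x}^2 \leq \genf(x) + \genf^*(u) - \scal{x}{u} + (\gamma/2)\norm{u}^2$, and multiplying through by $2\gamma$ is exactly \eqref{eq:20160213e}.

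For the concluding assertion, note that $\genf \in \Gamma_0(\hh)$ forces $\genf^* \in \Gamma_0(\hh)$, so $\dom \genf^* \neq \varnothing$; fixing any $u \in \dom \genf^*$ and any $x \in \dom \genf$, the right-hand side of \eqref{eq:20160213e} equals $2\gamma\big(\genf(x) + \genf^*(u) - \scal{x}{u}\big) + \gamma^2\norm{u}^2 \to 0$ as $\gamma \to 0^+$, whence $\prox_{\gamma \genf}(x) \to x$. The only delicate point is the bookkeeping of constants: starting merely from the minimality of $\psi$ at $y = x$ (which yields only $(1/2\gamma)\norm{z-x}^2 \leq \genf(x) - \genf(z)$) together with a symmetric Young split would lose a factor of $2$ and fail to reproduce the stated bound — one has to use the strong-convexity estimate with the factor $1/\gamma$ and the asymmetric split $(1/2\gamma)\norm{z-x}^2 + (\gamma/2)\norm{u}^2$.
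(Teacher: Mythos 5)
Your proof is correct and follows essentially the same route as the paper: the same first inequality $\frac{1}{\gamma}\norm{z-x}^2 \leq \genf(x)-\genf(z)$ from the subgradient characterization \eqref{eq:20160114a}, then the Fenchel--Young inequality at $u$. The only cosmetic difference is that the paper handles the cross term by completing the square (dropping the nonnegative term $\frac{1}{2\gamma}\norm{z-x+\gamma u}^2$), which is exactly your Cauchy--Schwarz plus weighted Young step in disguise.
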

\begin{proof}
Let $x \in \dom \genf$ and set, for the sake of brevity, $p_\gamma = \prox_{\gamma h}(x)$. 
It follows from \eqref{eq:20160114a} that 
$(x - p_\gamma)/\gamma \in \partial h(p_\gamma)$, hence
\begin{equation}
\label{eq:20151218a}
h(p_\gamma) + \frac{1}{\gamma} \norm{x - p_\gamma}^2
=  \Big\langle x - p_\gamma, \frac{x - p_\gamma}{\gamma} \Big\rangle + h(p_\gamma)
\leq h(x).
\end{equation}
Let $u \in \dom h^*$. Then, since
 $\scalarp{p_\gamma, u} - h^*(u) \leq h(p_\gamma)$, we have
\begin{align*}
&\frac{1}{2 \gamma} \norm{p_\gamma - x + \gamma u}^2 - \frac{\gamma}{2} \norm{u}^2
+\scalarp{x,u} - \genf^*(u) + \frac{1}{2 \gamma} \norm{p_\gamma - x}^2\\
&\quad =\scalarp{p_\gamma, u} - h^*(u) + \frac{1}{\gamma} \norm{p_\gamma - x}^2 \leq h(x).
\end{align*}
Hence 
%we obtain
%$ \norm{p_\gamma - x + \gamma u}^2 + \norm{p_\gamma - x}^2 \leq
%2 \gamma \big( \genf(x) + \genf^*(u) - \scalarp{x,u} + \gamma \norm{u}^2/2 \big)
%$ and
%the statement 
\eqref{eq:20160213e} follows.
\end{proof}

%Suppose that  $\scal{\cdot}{\cdot}_1$ is another scalar product on $\hh$.
%Then its \emph{associated positive operator}
%is $W_1\colon \hh \to \hh$ such that 
% for every $(x,y) \in \hh^2$, $\scal{x}{y}_1 = \scal{x}{W_1 y}$.

%\vspace{-1ex}
\section{Convergence analysis}
\label{sec:convanal}

In this section 
%we provide a detailed analysis of algorithm (VM-FBS)
% coupled with the 
%line search rules . 
we first 
discuss the hypotheses and the well-posedness of the 
 procedures \ref{LS1}--\ref{LS3}.
Then, we give a general convergence
principle for abstract variable metric descent algorithm (Theorem~\ref{thm:convergence02}).
Finally, we study the role and relationships among the proposed 
line search rules (Proposition~\ref{lem:20160129a}) and prove the convergence
properties of algorithm (VM-FBS) (Theorem~\ref{thm:convergence}).

\subsection{Preliminary results}

We examine assumption {\bf H2} and 
its consequences. 

\begin{remark}
\normalfont
\label{rmk:20151230a}
\begin{enumerate}[(i)]
\item\label{rmk:20151230ai}
If $\dom g = \hh$, {\bf H2}
is equivalent to requiring that $f$ is Fr\'echet differentiable on $\hh$ 
and that $\nabla f$ is uniformly continuous
on bounded sets (see Corollary~\ref{l:20160111w}\ref{l:20160111wii}). 
\item {\bf H2} is satisfied if $\nabla f$
is Lipschitz continuous on the weak compacts of $\dom g$.
\item By Remark~\ref{rmk:20160105a},
 {\bf H2} implies  
 $\dom g \subset \mathrm{int} \dom f$ and 
  $f$ is continuous on $\dom g$. 
\item\label{rmk:20151230aiii} {\bf H2} implies that the function 
$(\nabla f)_{\lvert \dom g} \colon \dom g \to \hh$
is continuous ---  in the relative topology of $\dom g$ 
(see Corollary~\ref{l:20160111w}\ref{l:20160111wi} below).
\item\label{rmk:20151230aibis}
Since
continuity on compact sets yields uniform continuity 
(Heine-Cantor theorem), 
if $\hh$ is finite dimensional,  hypothesis {\bf H2} turns to
require that $f$ is G\^ateaux differentiable on $\dom g$
and that ${(\nabla f)}_{\lvert \dom g} \colon \dom g \to \hh$
is continuous (with respect to the relative topology of $\dom g$).
%\item\label{rmk:20151230aiv} In general one cannot 
%infer from \ref{rmk:20151230aiii}
%that $f$ is Fr\'echet differentiable on $\dom g$. Indeed this would 
%require the continuity of $\nabla f$ at any point
%of $\dom g$ with respect to the topology of $\hh$ \cite[Corolary~17.33]{Livre1}, 
%whereas \ref{rmk:20151230aiii} only ensures that 
%$\nabla f$ is continuous
%on $\dom g$ in the relative topology of $\dom g$ ---
%e.g., if $\mathrm{int} \dom g = \varnothing$, 
%there are no neighborhoods in $\hh$ contained in $\dom g$.
%However, Corollary~\ref{l:20160111w}\ref{l:20160111wii}
%shows that a sort of partial Fr\'echet differentiability holds.
\end{enumerate}
\end{remark}

The following lemmas are at the basis of our convergence analysis. 
%under the mild differentiability hypothesis {\bf H2}.

\begin{lemma}
\label{l:20160103b}
Let $(x_k)_{k \in \N}$ and $(\tilde{x}_k)_{k \in \N}$ be two sequences in $\hh$,
let $\bar{x}\in \hh$ and suppose that $x_k \rightharpoonup \bar{x}$ 
and $\tilde{x}_k \rightharpoonup \bar{x}$.
Then $\{\bar{x}\} \cup \bigcup_{k \in \N} [x_k, \tilde{x}_k]$ is weakly compact.
In particular $\bigcup_{k \in \N} [x_k, \bar{x}]$ is weakly compact.
\end{lemma}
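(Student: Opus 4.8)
The plan is to show weak compactness of the set $K = \{\bar{x}\} \cup \bigcup_{k \in \N} [x_k, \tilde{x}_k]$ by combining two ingredients: boundedness (so that we can invoke the weak sequential compactness of bounded sets in a Hilbert space, i.e. the Eberlein--\v{S}mulian theorem) and weak sequential closedness of $K$. First I would note that since $(x_k)_{k \in \N}$ and $(\tilde{x}_k)_{k \in \N}$ are weakly convergent, they are bounded; hence there is $R>0$ with $\norm{x_k}\leq R$ and $\norm{\tilde x_k}\leq R$ for all $k$, and also $\norm{\bar x}\leq R$ (weak limits do not increase the norm bound). Every point of $[x_k,\tilde x_k]$ is a convex combination of $x_k$ and $\tilde x_k$, so it has norm at most $R$; therefore $K \subset \{z : \norm{z}\leq R\}$ is bounded.

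Next, the core of the argument is that $K$ is weakly sequentially closed. So I would take an arbitrary sequence $(z_n)_{n \in \N}$ in $K$ with $z_n \rightharpoonup z$ and argue that $z \in K$. Write $z_n = x_{k_n} + t_n(\tilde x_{k_n} - x_{k_n})$ with $t_n \in [0,1]$ (the case $z_n = \bar x$ being trivial; if it occurs infinitely often then $z = \bar x \in K$). There are two cases depending on the behavior of the index sequence $(k_n)_n$. If $(k_n)_n$ has a bounded subsequence, then along a further subsequence $k_n$ is constant, equal to some fixed $k$, so $z_n \in [x_k, \tilde x_k]$ for those $n$; since $[x_k,\tilde x_k]$ is a closed bounded convex set it is weakly closed, hence $z \in [x_k,\tilde x_k] \subset K$. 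If instead $k_n \to +\infty$, then $x_{k_n} \rightharpoonup \bar x$ and $\tilde x_{k_n} \rightharpoonup \bar x$; passing to a subsequence we may assume $t_n \to t \in [0,1]$, and then for every $w \in \hh$,
\begin{equation*}
\scal{z_n}{w} = (1-t_n)\scal{x_{k_n}}{w} + t_n \scal{\tilde x_{k_n}}{w} \to (1-t)\scal{\bar x}{w} + t\scal{\bar x}{w} = \scal{\bar x}{w},
\end{equation*}
using that the bracketed quantities are bounded so products of (bounded)$\times$(convergent-to-same-limit) converge correctly. Hence $z_n \rightharpoonup \bar x$, so $z = \bar x \in K$. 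In all cases $z \in K$, proving weak sequential closedness.

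Finally I would assemble these: $K$ is bounded, hence by Eberlein--\v{S}mulian its weak closure is weakly compact and every point of it is a weak limit of a sequence from $K$; weak sequential closedness then gives that $K$ equals its own weak closure, so $K$ is weakly compact. The last sentence of the statement, that $\bigcup_{k \in \N}[x_k,\bar x]$ is weakly compact, is the special case obtained by taking $\tilde x_k \equiv \bar x$ (which indeed converges weakly to $\bar x$), so it follows immediately.

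The main obstacle is the case $k_n \to +\infty$: one must be careful that a subsequence $z_n$ drawn from ``deep'' segments cannot escape $K$, and the resolution is precisely that such subsequences are forced to converge weakly back to $\bar x$, which was deliberately included in $K$. The rest is routine: boundedness of weakly convergent sequences, weak closedness of closed convex sets, and the Eberlein--\v{S}mulian characterization of weak compactness in terms of weak sequential compactness. One should also double-check the bookkeeping with nested subsequences — but since the target $z$ is fixed (it is the weak limit of the original $(z_n)$), showing that some subsequence of $(z_n)$ converges weakly to a point of $K$ suffices, because weak limits are unique.
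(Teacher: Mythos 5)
Your proof is correct, but it takes a genuinely different route from the paper's. The paper argues directly with open covers in the weak topology: given a weak open cover of $A=\{\bar{x}\}\cup\bigcup_{k}[x_k,\tilde{x}_k]$, it picks a member $U_{i_*}$ containing $\bar{x}$, uses local convexity of the weak topology to find a convex neighborhood $V$ of $0$ with $\bar{x}+V+V\subset U_{i_*}$, and observes that $x_k\rightharpoonup\bar{x}$ and $\tilde{x}_k-x_k\rightharpoonup 0$ force all but finitely many segments $[x_k,\tilde{x}_k]$ into $U_{i_*}$; the finitely many remaining segments are individually weakly compact, so a finite subcover exists. You instead prove boundedness plus weak sequential closedness (via the dichotomy: indices $k_n$ bounded, hence eventually constant along a subsequence and the limit lies in a single weakly closed segment; or $k_n\to\infty$, in which case $z_n\rightharpoonup\bar{x}$ by the convex-combination computation) and then invoke Eberlein--\v{S}mulian to pass from weak sequential compactness to weak compactness. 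Both arguments are sound, and your case analysis makes especially transparent why $\bar{x}$ must be adjoined to the union of segments. The trade-off: the paper's proof is self-contained and elementary, using only local convexity and the weak compactness of individual segments, and so delivers topological compactness directly without appealing to any deep theorem; yours is arguably more intuitive for readers used to sequential arguments, but it leans on Eberlein--\v{S}mulian (or the angelicity of bounded sets in the weak topology) to bridge sequential compactness and compactness --- a step that is essential here, since weak sequential closedness alone does not imply weak closedness in general, and which you do correctly flag and use.
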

\begin{proof}
Let us denote by $\hh_w$ the space $\hh$ endowed with the
weak topology. We recall that $\hh_w$ is a locally convex space \cite{Ree80}.
Set $A= \{\bar{x}\} \cup \bigcup_{k \in \N} [x_k, \tilde{x}_k]$ and
let $(U_i)_{i \in I}$ be an open covering of $A$ in $\hh_{w}$. Then there exists
$i_* \in I$ such that $\bar{x} \in U_{i_*}$. Thus, 
since $U_{i_*}$ is a weak neighborhood of $\bar{x}$,
%and  $\hh_w$ is a locally convex space,
there exists a convex neighborhood $V$ 
of the origin in $\hh_w$ such that 
$\bar{x} + V + V \subset U_{i_*}$.
Since $\bar{x} + V$ is a weak neighborhood of
$\bar{x}$, $x_k \rightharpoonup \bar{x}$ and $\tilde{x}_k - x_k \rightharpoonup 0$, 
there exists $\nu \in \N$ such that for every 
integer $k >\nu$,
we have $x_k \in \bar{x} + V$ and $\tilde{x}_k - x_k \in V$;
hence, for every $t \in [0,1]$, 
$x_k + t(\tilde{x}_k - x_k) \in \bar{x} + V + V \subset U_{i_*}$.
Moreover, for every integer $k \leq \nu$, since $[x_k, \tilde{x}_k]$ 
is weakly compact, 
there exists a finite $I_k \subset I$ such that
$[x_{k},\tilde{x}_k] \subset \bigcup_{i \in I_k} U_{i}$. Eventually,
setting $\tilde{I} = \bigcup_{k=0}^\nu I_k$ (which is finite),
we have $A \subset U_{i_*} \cup \bigcup_{i\in \tilde{I}} U_{i}$. The second
part of the statement follows from the first part by just
taking $\tilde{x}_k = \bar{x}$, for every $k \in \N$.
\end{proof}

\begin{lemma}
\label{l:20160103a}
Let $\Omega$ 
be an open subset of $\hh$ and let $f\colon \Omega \to \R$.
Suppose that 
$f$ is G\^ateaux differentiable on a convex set $C\subset \Omega$.
Let $(x_k)_{k \in \N}$ and $(\tilde{x}_k)_{k \in \N}$ be
sequences in $C$ and let $\bar{x} \in C$  be
such that $x_k \rightharpoonup \bar{x}$
and $\tilde{x}_k - x_k \to 0$.
%Set $A = \{\bar{x}\} \cup \{x_k \,\vert\, k \in \N\} \cup 
%\{\tilde{x}_k \,\vert\, k \in \N\}$.
Then the following hold:
\begin{enumerate}[{\rm(i)}]
\item
\label{l:20160103ai} 
Suppose that $\nabla f$ is uniformly continuous on any weak compact of 
$C$.
Then 
\begin{enumerate}[$(a)$]
\item\label{l:20160103aia}
 $\norm{\nabla f(\tilde{x}_k) - \nabla f(x_k)} \to 0$;
\item\label{l:20160103aib}
 $(\forall\, \varepsilon>0)(\exists\, \delta>0)(\forall\, k \in \N)$\\
$\norm{\tilde{x}_k - x_k} \leq \delta\ \Rightarrow\ 
\lvert f(\tilde{x}_k) - f(x_k) - \scal{\tilde{x}_k - x_k}{\nabla f(x_k)} \rvert 
\leq \varepsilon \norm{\tilde{x}_k - x_k}$.
\end{enumerate}
\item
\label{l:20160103aii} 
Suppose that $\nabla f$ is Lipschitz continuous on any weakly compact subset of $C$.
Then there exists $L>0$ such that, for every $k \in \N$,
$\norm{\nabla f(\tilde{x}_k) - \nabla f(x_k)} 
\leq L \norm{\tilde{x}_k - x_k}$
and 
$\lvert f(\tilde{x}_k) - f(x_k) - \scal{\tilde{x}_k - x_k}{\nabla f(x_k)} \rvert 
\leq L \norm{\tilde{x}_k - x_k}^2/2$.
\end{enumerate}
\end{lemma}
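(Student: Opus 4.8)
The plan is to reduce everything to a compactness argument built on Lemma \ref{l:20160103b}, and then exploit uniform continuity (resp. Lipschitz continuity) of $\nabla f$ on the resulting weak compact. First I would invoke Lemma \ref{l:20160103b}: since $x_k \rightharpoonup \bar x$ and $\tilde x_k = x_k + (\tilde x_k - x_k) \rightharpoonup \bar x$ as well, the set $K := \{\bar x\} \cup \bigcup_{k\in\N}[x_k,\tilde x_k]$ is weakly compact, and because all the $x_k$, $\tilde x_k$, $\bar x$ lie in the convex set $C$, in fact $K \subset C$, so $\nabla f$ is defined (and, by hypothesis, uniformly continuous or Lipschitz) on all of $K$.

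For part (i)(a): apply uniform continuity of $\nabla f$ on $K$. Given $\varepsilon>0$, pick $\delta>0$ from uniform continuity; since $\norm{\tilde x_k - x_k}\to 0$, for large $k$ we have $\norm{\tilde x_k - x_k}\le\delta$ with $x_k,\tilde x_k\in K$, hence $\norm{\nabla f(\tilde x_k)-\nabla f(x_k)}\le\varepsilon$. This is exactly (a). For part (i)(b): I would use the mean value inequality along the segment $[x_k,\tilde x_k]\subset K$. Writing $\phi(t) = f\big(x_k + t(\tilde x_k - x_k)\big)$ for $t\in[0,1]$, which is differentiable with $\phi'(t) = \scal{\tilde x_k - x_k}{\nabla f(x_k + t(\tilde x_k - x_k))}$ (using that $f$ is G\^ateaux differentiable on the convex set $C$), the fundamental theorem of calculus gives
\[
f(\tilde x_k) - f(x_k) - \scal{\tilde x_k - x_k}{\nabla f(x_k)}
= \int_0^1 \scal{\tilde x_k - x_k}{\nabla f(x_k + t(\tilde x_k - x_k)) - \nabla f(x_k)}\,dt,
\]
so by Cauchy--Schwarz the left side is bounded by $\norm{\tilde x_k - x_k}\cdot \sup_{t\in[0,1]}\norm{\nabla f(x_k + t(\tilde x_k - x_k)) - \nabla f(x_k)}$. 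Since every point $x_k + t(\tilde x_k - x_k)$ lies in $K$ and its distance to $x_k$ is at most $\norm{\tilde x_k - x_k}$, uniform continuity of $\nabla f$ on $K$ bounds this supremum by $\varepsilon$ as soon as $\norm{\tilde x_k - x_k}\le\delta$, which is the claimed implication (note the $\delta$ here is uniform in $k$, as required). Part (ii) follows by the identical computation, replacing the uniform-continuity modulus by the Lipschitz constant $L$ of $\nabla f$ on $K$: the integral is bounded by $\int_0^1 L t\,\norm{\tilde x_k - x_k}^2\,dt = (L/2)\norm{\tilde x_k - x_k}^2$, and the gradient bound is immediate.

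The main obstacle I anticipate is purely the justification that $\phi$ is differentiable with the stated derivative and that the fundamental theorem of calculus applies in this infinite-dimensional G\^ateaux setting: one must check that $t\mapsto \nabla f(x_k + t(\tilde x_k - x_k))$ is continuous on $[0,1]$ (so that the integrand is genuinely Riemann integrable), which again follows from uniform continuity of $\nabla f$ on the compact segment. A minor subtlety is that (a) and (b) are asserted for \emph{every} $k\in\N$, not just for large $k$; but since $\norm{\tilde x_k - x_k}\to 0$ only finitely many indices violate $\norm{\tilde x_k - x_k}\le\delta$, and the implication in (b) is vacuously true for those $k$, while for (a) one uses that a finite modification does not affect convergence to $0$. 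Everything else is routine estimation.
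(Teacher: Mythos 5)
Your proposal is correct and follows essentially the same route as the paper: weak compactness of $\{\bar{x}\}\cup\bigcup_{k\in\N}[x_k,\tilde{x}_k]$ via Lemma~\ref{l:20160103b}, uniform continuity (resp.\ Lipschitz continuity) of $\nabla f$ on that set, and the integral representation of $f(\tilde{x}_k)-f(x_k)-\scal{\tilde{x}_k-x_k}{\nabla f(x_k)}$ along the segment. Your remarks on the continuity of the integrand and on the quantifier ``for every $k$'' in (i)(b) match the care taken in the paper's argument.
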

\begin{proof}
It follows from Lemma~\ref{l:20160103b} that 
$A =\{\bar{x}\} \cup \bigcup_{k \in \N} [x_k, \tilde{x}_k]$ is weakly compact,
and moreover $A\subset C$.

\ref{l:20160103ai}:
That $\norm{\nabla f(\tilde{x}_k) - \nabla f(x_k)} \to 0$ follows from the
fact that $\nabla f$ is uniformly continuous on $A$, 
that, for every $k \in \N$, $\tilde{x}_k, x_k \in A$,
and that $\tilde{x}_k - x_k \to 0$.
Let $\varepsilon>0$. Then,
since $\nabla f$ is uniformly continuous on $A$,
there exists $\delta>0$ such that, for every $x,y \in A$,
$ \norm{x - y} \leq \delta\ \Rightarrow\ \norm{\nabla f(y) - \nabla f(x)} \leq \varepsilon$.
Let $k \in \N$ be such that $\norm{\tilde{x}_k - x_k} \leq \delta$. Then
 $[x_k,\tilde{x}_k] \subset A$ and
 the function $t \mapsto f(x_k + t(\tilde{x}_k - x_k))$
is differentiable with derivative $t \mapsto \scal{\tilde{x}_k-x_k}{\nabla f(x_k + t(\tilde{x}_k-x_k))}$,
which is continuous (for $\nabla f$ is uniformly continuous on $A$).
 Therefore, we have
%$\norm{\nabla f(\tilde{x}_k) - \nabla f(x_k)} \leq \varepsilon$ and
\begin{align*}
\lvert f(\tilde{x}_k) &- f(x_k) - \scal{\tilde{x}_k - x_k}{\nabla f(x_k)} \rvert \\
%& = \lvert \varphi(1) - \varphi(0) - \scal{\tilde{x}_k - x_k}{\nabla f(x_k)} \rvert\\
&= \Big\lvert \int_0^1 
\scal{\tilde{x}_k - x_k}{\nabla f(x_k + t(\tilde{x}_k-x_k)) - \nabla f(x_k)} d t \Big\rvert
%& \leq \int_0^1 \norm{\tilde{x}_k - x_k} \norm{\nabla f(x_k + t(\tilde{x}_k-x_k)) - \nabla f(x_k)} d t
 \leq \varepsilon \norm{\tilde{x}_k - x_k}.
\end{align*}

\ref{l:20160103aii}: Since $\nabla f$
is Lipschitz continuous on $A$, there exists $L>0$ such that,
for every $x, y \in A$, $\norm{\nabla f(x) - \nabla f(y)} \leq L \norm{x - y}$.
Let  $k \in \N$. Then
$\norm{\nabla f(\tilde{x}_k) - \nabla f(x_k)} 
\leq L \norm{\tilde{x}_k - x_k}$. Moreover,
since $[x_k,\tilde{x}_k] \subset A$, arguing as before, we have
\begin{equation*}
\lvert f(\tilde{x}_k) - f(x_k) - \scal{\tilde{x}_k - x_k}{\nabla f(x_k)} \rvert 
%&= \Big\lvert \int_0^1 
%\scal{\tilde{x}_k - x_k}{\nabla f(x_k + t(\tilde{x}_k-x_k)) - \nabla f(x_k)} d t \Big\rvert\\
%& \leq \int_0^1 \norm{\tilde{x}_k - x_k} \norm{\nabla f(x_k + t(\tilde{x}_k-x_k)) - \nabla f(x_k)} d t\\
 \leq \int_0^1 L t \norm{\tilde{x}_k - x_k}^2 d t = \frac{L}{2} \norm{\tilde{x}_k - x_k}^2.
\end{equation*}
\end{proof}

\begin{corollary}
\label{l:20160111w}
Let $\Omega$ 
be a nonempty open subset of $\hh$ and let $f\colon \Omega \to \R$.
Suppose that 
$f$ is G\^ateaux differentiable on a nonempty convex set $C \subset \Omega$ and that
$\nabla f$ is uniformly continuous on any weakly compact subset of $C$.
Then
\begin{enumerate}[{\rm(i)}]
\item\label{l:20160111wi}  
${(\nabla f)}_{\lvert C}\colon C \to \hh$ is continuous (in the relative topology of $C$).
\item\label{l:20160111wii} 
for every $\bar{x} \in C$
\begin{equation*}
\lim_{\substack{x \to \bar{x}\\x \in C, x\neq \bar{x}}} 
\frac{\lvert f(x) - f(\bar{x}) - \scal{x-\bar{x}}{\nabla f(\bar{x})}\rvert}{\norm{x-\bar{x}}} =0.
\end{equation*}
\end{enumerate}
\end{corollary}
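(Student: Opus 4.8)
The plan is to deduce both assertions from Lemma~\ref{l:20160103a}\ref{l:20160103ai} by a sequential argument, the point being that $C$, equipped with the topology it inherits from the strong topology of $\hh$, is a metric space, so that continuity of $(\nabla f)_{\lvert C}$ in \ref{l:20160111wi} and the limit in \ref{l:20160111wii} may each be tested along sequences. The only device required is to take one of the two sequences appearing in Lemma~\ref{l:20160103a} to be \emph{constant}.

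For \ref{l:20160111wi} I would fix $\bar x \in C$, let $(x_k)_{k \in \N}$ be an arbitrary sequence in $C$ with $x_k \to \bar x$ strongly (hence also $x_k \rightharpoonup \bar x$), and apply Lemma~\ref{l:20160103a}\ref{l:20160103ai}(a) to the pair formed by $(x_k)_{k \in \N}$ and the constant sequence $\tilde x_k \equiv \bar x$; here $\bar x \in C$, $\bar x \rightharpoonup \bar x$ trivially, and $\tilde x_k - x_k = \bar x - x_k \to 0$, so the hypotheses are met and the conclusion reads $\norm{\nabla f(\bar x) - \nabla f(x_k)} \to 0$. Thus $(\nabla f)_{\lvert C}$ is sequentially continuous, hence continuous, at $\bar x$; since $\bar x$ was arbitrary, \ref{l:20160111wi} follows.

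For \ref{l:20160111wii} I would fix $\bar x \in C$, take an arbitrary sequence $(x_k)_{k \in \N}$ in $C \setminus \{\bar x\}$ with $x_k \to \bar x$, and now apply Lemma~\ref{l:20160103a}\ref{l:20160103ai}(b) with the roles reversed: the constant sequence $\bar x$ plays the part of ``$x_k$'' (supplying $\bar x \rightharpoonup \bar x$) and $(x_k)_{k \in \N}$ plays the part of ``$\tilde x_k$'' (supplying $x_k - \bar x \to 0$). For each $\varepsilon > 0$ this produces a $\delta > 0$ such that $\norm{x_k - \bar x} \le \delta$ implies $\abs{f(x_k) - f(\bar x) - \scal{x_k - \bar x}{\nabla f(\bar x)}} \le \varepsilon \norm{x_k - \bar x}$; since $x_k \to \bar x$ this inequality holds for all large $k$, so the ratio in \ref{l:20160111wii} has $\limsup$ at most $\varepsilon$ along $(x_k)_{k \in \N}$, and letting $\varepsilon \downarrow 0$ shows it tends to $0$. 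As the sequence was arbitrary, the asserted limit holds.

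I do not anticipate a genuine obstacle here, since all the analytic content is already carried by Lemma~\ref{l:20160103a} and the steps above are merely its repackaging. The only points calling for minor care are the bookkeeping of which of the two sequences to take constant — the constant one furnishes the (trivial) weak convergence, the moving one the vanishing increment — the elementary remark that strong convergence entails weak convergence, and the standard metrizability observation that licenses reducing both continuity and the limit to statements about sequences.
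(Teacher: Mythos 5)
Your proposal is correct and follows essentially the same route as the paper: both assertions are obtained from Lemma~\ref{l:20160103a}\ref{l:20160103ai} by taking one of the two sequences constant equal to $\bar{x}$ and testing continuity and the limit along sequences. The only (immaterial) difference is that in part \ref{l:20160111wi} you place the moving sequence in the role of $(x_k)_{k\in\N}$ while the paper puts the constant sequence there; since only the difference of gradients appears, both choices work.
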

\begin{proof}
\ref{l:20160111wi}:
For every $\bar{x} \in C$ and every $(\tilde{x}_k)_{k \in \N}$ in $C$ such that 
$\tilde{x}_k \to \bar{x}$, it follows from 
Lemma~\ref{l:20160103a}\ref{l:20160103ai}\ref{l:20160103aia} 
(with $x_k = \bar{x}$, for every $k \in \N$) that 
$\norm{\nabla f(\tilde{x}_k) - \nabla f(\bar{x})} \to 0$.

\ref{l:20160111wii}:
Let $\bar{x} \in C$. Then for every $(\tilde{x}_k)_{k \in \N}$ in 
$C\setminus\!\{\bar{x}\}$ such that 
$\tilde{x}_k \to \bar{x}$, 
Lemma~\ref{l:20160103a}\ref{l:20160103ai}\ref{l:20160103aib} 
(with $x_k \equiv \bar{x}$) yields
$\lvert f(\tilde{x}_k) - f(\bar{x}) - \scal{\tilde{x}_k-\bar{x}}{\nabla f(\bar{x})}\rvert/
\norm{\tilde{x}_k-\bar{x}} \to 0$.
\qquad\end{proof}

\begin{lemma}
\label{lem:Jproperties}
Assume that {\bf H1} holds
and that $f$ is G\^ateaux differentiable on $\dom g$.
 Let $x \in \dom g$ and set, for every $\gamma \in \RPP$ and $\lambda \in \left]0,1\right]$,
 \begin{equation}
\label{eq:Jdef}
J (x,\gamma,\lambda) = x 
 + \lambda \big( \prox_{\gamma g} (x - \gamma \nabla f(x)) -x \big).
\end{equation}
Then the following hold.
\begin{enumerate}[{\rm(i)}]
\item 
\label{lem:Jpropertiesi}
Let $\lambda\in \left]0,1\right]$. Then for every $(\gamma_1,\gamma_2) \in \RPP^2$,
\begin{equation*}
\gamma_1 \leq \gamma_2\ \Rightarrow\ \norm{J(x,\gamma_1,\lambda) - x} \leq \norm{J(x,\gamma_2, \lambda) - x} 
\leq \frac{\gamma_2}{\gamma_1} \norm{J(x,\gamma_1, \lambda) - x}.
\end{equation*}
%i.e.,
% $\gamma \mapsto \norm{J(x,\gamma,\lambda) - x}$ is increasing and 
%$\gamma \mapsto \norm{J(x,\gamma,\lambda) - x}/\gamma$ is decreasing.
\item 
\label{lem:Jpropertiesi2}
Let $\gamma\in \RPP$. Then for every $(\lambda_1,\lambda_2) \in \left]0,1\right]^2$, 
\begin{equation*}
\lambda_1 \leq \lambda_2\ \Rightarrow\ \norm{J(x,\gamma,\lambda_1) - x} \leq \norm{J(x,\gamma, \lambda_2) - x} 
= \frac{\lambda_2}{\lambda_1} \norm{J(x,\gamma, \lambda_1) - x}.
\end{equation*}
\item 
\label{lem:Jpropertiesii}
$\forall\, \lambda \in \left]0,1\right]$ 
$\lim_{\gamma\to 0+} J(x,\gamma,\lambda) = x$ 
and $\forall\,\gamma \in \RPP$
$\lim_{\lambda\to 0^+}J(x,\gamma,\lambda) = x$.
\item\label{lem:Jpropertiesiib}
Let $u \in \dom g^*$. Then
for every $x \in \dom g$ and every $\gamma>0$,
\begin{equation}
\label{eq:20160213c}
\norm{J(x,\gamma,1) - x} \leq \gamma\! \norm{\nabla f(x)} 
+  \sqrt{2 \gamma \big(g(x) \!+\! g^*(u) \!-\! \scal{x}{u} \!+\! \gamma \norm{u}^2/2\big)}.
\end{equation}
\end{enumerate}
\end{lemma}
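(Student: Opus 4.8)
The whole lemma is about the single-step map $J(x,\gamma,\lambda)$, and the key observation is that $J(x,\gamma,\lambda) - x = \lambda\,(p_\gamma - x)$ where $p_\gamma = \prox_{\gamma g}(x - \gamma\nabla f(x))$; so parts (ii)--(iv) are essentially statements about $p_\gamma - x$ that get rescaled by $\lambda$. I would therefore isolate the behavior of $\gamma \mapsto p_\gamma - x$ first and then read off everything else.

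For part (i), fix $\lambda$ and write $z = x - \gamma_1\nabla f(x)$, $z' = x - \gamma_2\nabla f(x)$; I would invoke the standard fact that for a fixed point $w$, the map $\gamma \mapsto \prox_{\gamma g}(w)$ moves monotonically — more precisely, I would use that $p_\gamma$ solves $\min_y g(y) + \tfrac1{2\gamma}\norm{y-(x-\gamma\nabla f(x))}^2$, equivalently $\gamma^{-1}(x - p_\gamma) - \nabla f(x) \in \partial g(p_\gamma)$, i.e. $v_\gamma := \gamma^{-1}(p_\gamma - x) + \nabla f(x) \in -\partial g(p_\gamma)$. The cleanest route is the known ``proximal path'' monotonicity: the function $\gamma \mapsto \norm{\prox_{\gamma g}(x - \gamma\nabla f(x)) - x}$ is nondecreasing and $\gamma \mapsto \gamma^{-1}\norm{\prox_{\gamma g}(x - \gamma\nabla f(x)) - x}$ is nonincreasing. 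This is exactly the classical estimate on the forward-backward operator (it is, e.g., the convexity/monotonicity of the Moreau envelope gradient applied to $g$ at the shifted point); I would prove it directly by a two-point subgradient inequality argument: writing the optimality conditions for $\gamma_1$ and $\gamma_2$, subtracting, and using monotonicity of $\partial g$ together with $\langle \nabla f(x), \cdot\rangle$ cancelling appropriately, one gets $\langle p_{\gamma_1} - p_{\gamma_2}, (p_{\gamma_1}-x)/\gamma_1 - (p_{\gamma_2}-x)/\gamma_2\rangle \le 0$, and expanding this inner product yields both inequalities after elementary manipulation. Multiplying through by $\lambda$ gives the stated chain. I expect this monotonicity step to be the main obstacle, mostly in getting the algebra of the expansion clean; everything downstream is routine.

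Part (ii) is immediate from part (i): for $\gamma \le \bar\gamma$ the chain gives $\norm{J(x,\gamma,\lambda) - x} = \lambda\norm{p_\gamma - x} \le \lambda(\gamma/\bar\gamma)\norm{p_{\bar\gamma}-x} \to 0$ as $\gamma \to 0^+$. Part (iii) is trivial since $J(x,\gamma,\lambda) - x = \lambda(p_\gamma - x)$ depends linearly on $\lambda$: $\norm{J(x,\gamma,\lambda_1)-x} = \lambda_1\norm{p_\gamma - x} \le \lambda_2\norm{p_\gamma-x} = \norm{J(x,\gamma,\lambda_2)-x}$ and the ratio is exactly $\lambda_2/\lambda_1$; the $\lambda\to 0^+$ limit in (ii) is then obvious. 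For part (iv), with $\lambda = 1$ I would write $J(x,\gamma,1) - x = p_\gamma - x = \big(\prox_{\gamma g}(x-\gamma\nabla f(x)) - (x - \gamma\nabla f(x))\big) + \big((x-\gamma\nabla f(x)) - x\big)$, so $\norm{J(x,\gamma,1)-x} \le \norm{\prox_{\gamma g}(w) - w} + \gamma\norm{\nabla f(x)}$ with $w = x - \gamma\nabla f(x)$; but I cannot apply Fact~\ref{lem:prox} at $w$ directly since $w$ need not lie in $\dom g$. Instead I apply Fact~\ref{lem:prox} at the point $x \in \dom g$ after noting that $\prox_{\gamma g}(x - \gamma\nabla f(x))$ is the forward-backward point, for which the bound $\norm{\prox_{\gamma g}(x-\gamma\nabla f(x)) - x}^2 \le 2\gamma\big(g(x) + g^*(u) - \langle x,u\rangle + \gamma\norm{u}^2/2\big) + (\text{gradient term})$ can be derived by reworking the proof of Fact~\ref{lem:prox}: from $\gamma^{-1}(x - p_\gamma) - \nabla f(x) \in \partial g(p_\gamma)$ and $g(p_\gamma) \ge \langle p_\gamma, u\rangle - g^*(u)$, one gets, paralleling \eqref{eq:20151218a}, an inequality of the form $g(p_\gamma) + \gamma^{-1}\norm{x - p_\gamma}^2 \le g(x) + \langle x - p_\gamma, \nabla f(x)\rangle$, and then completing the square exactly as in the proof of Fact~\ref{lem:prox} yields $\norm{p_\gamma - x}^2 \le 2\gamma(g(x)+g^*(u)-\langle x,u\rangle + \gamma\norm u^2/2) + 2\gamma\langle x - p_\gamma,\nabla f(x)\rangle$; bounding the last term by $2\gamma\norm{x-p_\gamma}\,\norm{\nabla f(x)}$ and using $2ab \le a^2 + b^2$ (with $a = \norm{x-p_\gamma}$, $b = \gamma\norm{\nabla f(x)}$) absorbs the $\norm{x-p_\gamma}^2$ on the right and leads, after taking square roots and the elementary inequality $\sqrt{s+t} \le \sqrt s + \sqrt t$, to \eqref{eq:20160213c}. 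The ``$\prox_{\gamma g}(x) \to x$'' tail of Fact~\ref{lem:prox} is not needed here; only its quantitative bound and its proof technique are reused.
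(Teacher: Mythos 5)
Your direct monotonicity argument for item (i) is correct --- it is essentially the two-point subgradient computation behind the references the paper cites, so it is more self-contained than the paper's proof of that item --- and the $\lambda$-monotonicity and the $\lambda\to 0^+$ limit are indeed trivial from the linear dependence on $\lambda$. However, there are two genuine gaps. First, your derivation of $\lim_{\gamma\to 0^+}J(x,\gamma,\lambda)=x$ rests on the inequality $\norm{J(x,\gamma,\lambda)-x}\le \lambda(\gamma/\bar\gamma)\norm{p_{\bar\gamma}-x}$, which is the \emph{reverse} of what item (i) provides: since $\gamma\mapsto\norm{p_\gamma-x}/\gamma$ is \emph{decreasing}, for $\gamma\le\bar\gamma$ the chain only gives the lower bound $\norm{J(x,\gamma,\lambda)-x}\ge(\gamma/\bar\gamma)\norm{J(x,\bar\gamma,\lambda)-x}$ together with monotone boundedness, and monotone decrease of $\norm{p_\gamma-x}$ as $\gamma\downarrow 0$ does not by itself force the limit to be $0$. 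Second, in item (iv) the step ``$2ab\le a^2+b^2$ with $a=\norm{x-p_\gamma}$'' reintroduces $\norm{x-p_\gamma}^2$ on the right-hand side with coefficient one, so it cancels exactly against the left-hand side and the resulting inequality is vacuous; even if you repair this by solving the quadratic $t^2\le 2\gamma C+2\gamma\norm{\nabla f(x)}\,t$ in $t=\norm{p_\gamma-x}$, you only obtain $t\le 2\gamma\norm{\nabla f(x)}+\sqrt{2\gamma C}$, which misses the constant in \eqref{eq:20160213c}.

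Both gaps are closed by the same device, which is what the paper actually uses (see \eqref{eq:20160213a}): split at $\prox_{\gamma g}(x)$ rather than at $x-\gamma\nabla f(x)$, i.e.\ write $p_\gamma-x=\big(\prox_{\gamma g}(x-\gamma\nabla f(x))-\prox_{\gamma g}(x)\big)+\big(\prox_{\gamma g}(x)-x\big)$. Nonexpansiveness of $\prox_{\gamma g}$ bounds the first summand by $\gamma\norm{\nabla f(x)}$, and since $x\in\dom g$ you may apply Fact~\ref{lem:prox} \emph{at $x$}: its quantitative bound \eqref{eq:20160213e} gives $\norm{\prox_{\gamma g}(x)-x}\le\sqrt{2\gamma\big(g(x)+g^*(u)-\scal{x}{u}+\gamma\norm{u}^2/2\big)}$, which yields \eqref{eq:20160213c} with the stated constants, while its qualitative part $\prox_{\gamma g}(x)\to x$ supplies the missing $\gamma\to 0^+$ limit. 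You correctly noticed the obstruction that $w=x-\gamma\nabla f(x)$ need not lie in $\dom g$, but the fix is to move the base point of the splitting, not to rework the proof of Fact~\ref{lem:prox} at the forward--backward point.
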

\begin{proof}
\ref{lem:Jpropertiesi}: It is a consequence of the fact that 
$\gamma\mapsto \norm{J(x,\gamma,1) - x}$ is increasing and
that $\gamma\mapsto \norm{J(x,\gamma,1) - x}/\gamma$ is decreasing:
see \cite[Lemma~2.2]{Cal87} for the projection case and \cite[Lemma~3]{Tseng09} 
and \cite[Lemma~2.4]{Cruz15} for the general case. 

\ref{lem:Jpropertiesi2}:
It is trivial.

\ref{lem:Jpropertiesii}: 
We prove the first part. Let $\gamma \in \RPP$. Then, since 
 $\prox_{\gamma g}$ is non-expansive
\begin{equation}
\label{eq:20160213a}
\begin{aligned}
\frac{\norm{J(x,\gamma,\lambda) - x}}{\lambda} &\leq \norm{\prox_{\gamma g}(x - \gamma \nabla f(x)) 
- \prox_{\gamma g}(x)} + \norm{\prox_{\gamma g} (x)- x}\\
& \leq \gamma \norm{\nabla f(x)} + \norm{\prox_{\gamma g} (x)- x}.
\end{aligned}
\end{equation}
Therefore, we derive from Fact~\ref{lem:prox} that $\norm{J(x,\gamma,\lambda) - x} \to 0$
as $\gamma \to 0^+$. 

\ref{lem:Jpropertiesiib}:
It follows from \eqref{eq:20160213a} and Fact~\ref{lem:prox}. 
\qquad\end{proof}

Finally the following lemma addresses the well-posedness of 
the definitions of the various proposed line search procedures.

\begin{lemma}
\label{lem:linesearch}
Assume that {\bf H1} holds
and that $f$ is G\^ateaux differentiable on $\dom g$.
 Let $J$ as in \eqref{eq:Jdef}
and let $x \in \dom g$. Suppose that $x \notin \argmin (f+g)$. Then 
\begin{enumerate}[{\rm(i)}]
\item
\label{lem:Jpropertiesiii}
If $(\nabla f)_{\lvert \dom g}\colon \dom g \to \hh$ 
is continuous at $x$, then
\begin{equation}
\label{eq:20151220aii}
(\forall\, \lambda>0)\ \lim_{\gamma \to 0^+} 
\frac{\gamma  \norm{ \nabla f(J(x,\gamma,\lambda)) - \nabla f(x)}}
{\norm{J(x,\gamma) - x}} = 0.
\end{equation}
\item
\label{lem:Jpropertiesiv} 
If $\nabla f$ is uniformly continuous on any weakly compact 
subsets of $\dom g$, then
\begin{equation}
\label{eq:20151220a}
(\forall\, \lambda>0)\ \lim_{\gamma \to 0^+} \frac{\gamma  
\lvert f(J(x,\gamma, \lambda)) - f(x) - \scal{J(x,\gamma,\lambda) -x}{\nabla f(x)}\rvert}
{\norm{J(x,\gamma, \lambda) - x}^2} = 0
\end{equation}
and
\begin{equation}
\label{eq:20160125m}
(\forall\, \gamma>0)\ \lim_{\lambda \to 0^+} \frac{\lambda  
\lvert f(J(x,\gamma, \lambda)) - f(x) - \scal{J(x,\gamma, \lambda) -x}{\nabla f(x)}\rvert}
{\norm{J(x,\gamma, \lambda) - x}^2} = 0.
\end{equation}
\item 
\label{lem:Jpropertiesv}
Let  $\gamma>0$
and $y \in \dom g$. Suppose that $y - x$ is a descent direction
for $f+g$ at $x$, that is $(f+g)^\prime(x,y-x)<0$. Then,
for every $\delta \in \left]0,1\right[$ there exists $\lambda_0 \in \left]0,1\right]$
such that, for every $\lambda \in \left]0,\lambda_0\right]$,
\begin{equation*}
(f+g)(x + \lambda(y - x)) - (f+g)(x)
\leq (1 - \delta) \lambda
\big(g (y) - g(x) + \scal{y - x}{\nabla f(x)} \big).
\end{equation*}
\end{enumerate}
\end{lemma}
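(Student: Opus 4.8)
The plan is to handle the three items essentially separately, in each case relying on the preliminary Lemmas~\ref{lem:Jproperties} and \ref{l:20160103a} and on Fact~\ref{lem:prox}. The recurring mechanism is the same: as $\gamma \to 0^+$ (resp.\ $\lambda \to 0^+$) we have $J(x,\gamma,\lambda) \to x$ by Lemma~\ref{lem:Jproperties}\ref{lem:Jpropertiesii}, so the sequence $\tilde x_k := J(x,\gamma_k,\lambda)$ (for any null sequence $\gamma_k$) converges \emph{strongly}, hence weakly, to $\bar x := x$ with $\tilde x_k - x \to 0$; we are thus exactly in the situation of Lemma~\ref{l:20160103a} with the constant sequence $x_k \equiv x$. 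The one structural point that must be checked first is that $x \notin \argmin(f+g)$ guarantees $J(x,\gamma,\lambda) \neq x$ for all small enough $\gamma$ (equivalently $\|J(x,\gamma,\lambda)-x\|>0$), so that the quotients in \eqref{eq:20151220aii}, \eqref{eq:20151220a}, \eqref{eq:20160125m} are well defined for small parameters. This follows because if $\prox_{\gamma g}(x-\gamma\nabla f(x)) = x$ for some $\gamma>0$, then by \eqref{eq:20160114a} we get $-\nabla f(x) \in \partial g(x)$, i.e.\ $0 \in \partial(f+g)(x)$, contradicting $x \notin \argmin(f+g)$; and then $\|J(x,\gamma,\lambda)-x\| = \lambda\|\prox_{\gamma g}(x-\gamma\nabla f(x))-x\| > 0$.

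For \ref{lem:Jpropertiesiii}, I write $\gamma\|\nabla f(J(x,\gamma,\lambda)) - \nabla f(x)\| / \|J(x,\gamma,\lambda)-x\|$ and use Lemma~\ref{lem:Jproperties}\ref{lem:Jpropertiesi2} to replace $\|J(x,\gamma,\lambda)-x\|$ by $\lambda\|J(x,\gamma,1)-x\|$, reducing to the case $\lambda=1$; then Lemma~\ref{lem:Jproperties}\ref{lem:Jpropertiesi} gives $\|J(x,\gamma,1)-x\| \geq \|J(x,\bar\gamma,1)-x\|\cdot(\gamma/\bar\gamma)$ for any fixed reference $\bar\gamma \geq \gamma$ — actually the cleaner route is: from \eqref{eq:20160213a} with $\lambda=1$, $\|J(x,\gamma,1)-x\|/\gamma$ is bounded, but more importantly it is \emph{positive and decreasing} in $\gamma$, so $\gamma/\|J(x,\gamma,1)-x\|$ stays bounded as $\gamma\to 0^+$ (it increases to a possibly infinite limit only if $\|J(x,\gamma,1)-x\|$ vanishes faster than $\gamma$, which cannot happen since $\|J(x,\gamma,1)-x\|/\gamma$ is decreasing hence bounded below near $0$ by its value at any fixed point). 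Meanwhile $\|\nabla f(J(x,\gamma,\lambda)) - \nabla f(x)\| \to 0$ by continuity of $(\nabla f)_{|\dom g}$ at $x$ since $J(x,\gamma,\lambda) \to x$; the product of a bounded quantity and a vanishing one is vanishing.

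For \ref{lem:Jpropertiesiv}, both \eqref{eq:20151220a} and \eqref{eq:20160125m} follow from Lemma~\ref{l:20160103a}\ref{l:20160103ai}\ref{l:20160103aib}: given $\varepsilon>0$ there is $\delta>0$ with $\|\tilde x - x\| \leq \delta \Rightarrow |f(\tilde x)-f(x)-\scal{\tilde x-x}{\nabla f(x)}| \leq \varepsilon\|\tilde x - x\|$ for $\tilde x$ on the relevant weakly compact segment set; dividing by $\|\tilde x - x\|^2$ gives a bound $\varepsilon/\|\tilde x-x\|$, and then multiplying by $\gamma$ (resp.\ $\lambda$) I use that $\gamma/\|J(x,\gamma,\lambda)-x\|$ stays bounded as $\gamma\to 0^+$ (same argument as above, via Lemma~\ref{lem:Jproperties}\ref{lem:Jpropertiesi}–\ref{lem:Jpropertiesi2}), resp.\ that $\lambda/\|J(x,\gamma,\lambda)-x\| = 1/\|J(x,\gamma,1)-x\|$ is constant in $\lambda$ by Lemma~\ref{lem:Jproperties}\ref{lem:Jpropertiesi2}; since $\varepsilon$ was arbitrary, the limits are $0$. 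For \ref{lem:Jpropertiesv}, the standard Armijo-type argument: set $\varphi(\lambda) = (f+g)(x+\lambda(y-x))$, a convex function on $[0,1]$ with right derivative $\varphi'_+(0) = (f+g)'(x,y-x) < 0$; convexity gives $\varphi(\lambda) - \varphi(0) \leq \lambda \cdot \frac{\varphi(\lambda)-\varphi(0)}{\lambda}$... more usefully, $g(x+\lambda(y-x)) - g(x) \leq \lambda(g(y)-g(x))$ by convexity of $g$, and $f(x+\lambda(y-x)) - f(x) = \lambda\scal{y-x}{\nabla f(x)} + o(\lambda)$ by Gâteaux differentiability of $f$ at $x$ along $y-x$; adding, $(f+g)(x+\lambda(y-x)) - (f+g)(x) \leq \lambda(g(y)-g(x)+\scal{y-x}{\nabla f(x)}) + o(\lambda)$, and since the bracket equals $(f+g)'(x,y-x) < 0$, the $o(\lambda)$ term is eventually dominated by $\delta\lambda$ times the (negative) bracket, giving the claim for $\lambda$ small.

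The main obstacle is the bookkeeping in \ref{lem:Jpropertiesiii} and \ref{lem:Jpropertiesiv}: one must be careful that $\gamma/\|J(x,\gamma,\lambda)-x\|$ does not blow up as $\gamma\to 0^+$. This is exactly where $x \notin \argmin(f+g)$ and the monotonicity properties of $\gamma \mapsto \|J(x,\gamma,1)-x\|$ and $\gamma\mapsto \|J(x,\gamma,1)-x\|/\gamma$ from Lemma~\ref{lem:Jproperties}\ref{lem:Jpropertiesi} are essential — the ratio $\|J(x,\gamma,1)-x\|/\gamma$ being decreasing in $\gamma$ means it is bounded \emph{below} on $(0,\bar\gamma]$ by its value at $\bar\gamma$, which is strictly positive precisely because $x$ is not a minimizer, so $\gamma/\|J(x,\gamma,1)-x\|$ is bounded above by a constant independent of small $\gamma$.
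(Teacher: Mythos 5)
Your treatment of items \ref{lem:Jpropertiesiii} and \ref{lem:Jpropertiesiv} is correct and is essentially the paper's own argument: the well-definedness of the quotients via $x\notin\argmin(f+g)\Leftrightarrow x\neq\prox_{\gamma g}(x-\gamma\nabla f(x))$, the bound on $\gamma/\norm{J(x,\gamma,\lambda)-x}$ from the monotonicity of $\gamma\mapsto\norm{J(x,\gamma,1)-x}/\gamma$, and the uniform-continuity estimate of Lemma~\ref{l:20160103a}\ref{l:20160103ai}\ref{l:20160103aib} applied with the constant sequence $x_k\equiv x$ (the paper packages this as Corollary~\ref{l:20160111w}\ref{l:20160111wii}, but it is the same step).

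There is, however, a genuine gap in your argument for item \ref{lem:Jpropertiesv}. You assert that the bracket $B:=g(y)-g(x)+\scal{y-x}{\nabla f(x)}$ \emph{equals} $(f+g)^\prime(x,y-x)$ and is therefore negative. In fact one only has $(f+g)^\prime(x,y-x)=g^\prime(x,y-x)+\scal{y-x}{\nabla f(x)}\leq B$, and the inequality can be strict, so $B$ may be nonnegative even though $(f+g)^\prime(x,y-x)<0$ (take $\hh=\R$, $x=0$, $y=1$, $f=0$, $g(t)=t^2-t/2$: then $(f+g)^\prime(0,1)=-1/2$ but $B=1/2$). In that situation your chain of estimates gives only $(f+g)(x+\lambda(y-x))-(f+g)(x)\leq\lambda B+o(\lambda)$, and absorbing the $o(\lambda)$ term into $-\delta\lambda B$ is impossible when $B\geq 0$; indeed in the example above your final inequality would read $\lambda/2\leq(1-\delta)\lambda/2$, which is false. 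The conclusion itself is still true, and the repair is the one the paper uses: since $(f+g)^\prime(x,y-x)<0$ and $1-\delta\in\left]0,1\right[$, one has the \emph{strict} inequality $(f+g)^\prime(x,y-x)<(1-\delta)(f+g)^\prime(x,y-x)\leq(1-\delta)B$ (trivially also when the directional derivative is $-\infty$), and then the convergence of the difference quotient $\big((f+g)(x+\lambda(y-x))-(f+g)(x)\big)/\lambda$ to $(f+g)^\prime(x,y-x)$ yields the claim for all small $\lambda$. Note that in the only place the lemma is invoked (line search \ref{LS4}), Lemma~\ref{lem:FBS_step}\ref{lem:FBS_stepi} guarantees $B\leq-\norm{y_k-x_k}^2/\gamma_k<0$, so your argument would happen to suffice there; but as a proof of the lemma as stated it does not go through.
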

\begin{proof}
\ref{lem:Jpropertiesiii}-\ref{lem:Jpropertiesiv}:
We first note that, using \eqref{eq:20160114a}, for every $\gamma>0$, we have
\begin{equation*}
x \in \argmin (f+g) \iff - \nabla f(x) \in \partial g(x) \iff x = \prox_{\gamma g}(x - \gamma \nabla f(x)).
\end{equation*}
Let $\lambda>0$. Then, since $x \notin \argmin (f+g)$, we have
 $\norm{J(x,\gamma,\lambda) - x} \neq 0$,
for every $\gamma>0$.
Next, since 
$\gamma \mapsto \norm{J(x,\gamma,\lambda) - x}/\gamma$ is decreasing,
$
\lim_{\gamma \to 0^+} \norm{J(x,\gamma,\lambda) - x}/\gamma
= \sup_{\gamma>0} \norm{J(x,\gamma,\lambda) - x}/\gamma>0,
$
and hence there exists $(\gamma_0,M) \in \RPP^2$ such that
\begin{equation}
\label{eq:20160102b}
\frac{\gamma}{\norm{J(x,\gamma,\lambda) - x}} \leq M \quad \forall\, \gamma \in \left]0, \gamma_0\right].
\end{equation}
Moreover $\lim_{\gamma \to 0^+}J(x,\gamma,\lambda) = x$.
Thus, if $(\nabla f)_{\lvert \dom g}$ is continuous at $x$ 
(case \ref{lem:Jpropertiesiii}), 
 \begin{equation}
\label{eq:20160102aii}
\lim_{\gamma \to 0^+} \norm{ \nabla f(J(x,\gamma,\lambda)) - \nabla f(x)} = 0,
\end{equation}
otherwise, if 
$\nabla f$ is uniformly continuous on any compact subsets of $\dom g$
(case \ref{lem:Jpropertiesiv}),
 we derive from Corollary~\ref{l:20160111w}\ref{l:20160111wii} that 
\begin{equation}
\label{eq:20160102a}
\lim_{\gamma \to 0^+} \frac{\lvert f(J(x,\gamma,\lambda)) - f(x) 
- \scal{J(x,\gamma,\lambda) -x}{\nabla f(x)}\rvert}
{\norm{J(x,\gamma,\lambda) - x}} = 0.
\end{equation}
Then, \eqref{eq:20151220aii} follows from \eqref{eq:20160102aii} and \eqref{eq:20160102b},
whereas \eqref{eq:20151220a} follows from \eqref{eq:20160102a} and \eqref{eq:20160102b}.\\
Since $\lim_{\lambda\to0^+}J(x,\gamma,\lambda) = x$ and
$(J(x,\gamma,\lambda) - x )/\lambda = 
\lVert \prox_{\gamma g} (x - \gamma \nabla f(x)) - x \rVert \neq 0$,
equation \eqref{eq:20160125m} follows from Corollary~\ref{l:20160111w}\ref{l:20160111wii},
as done before.

\ref{lem:Jpropertiesv}:
We have 
\begin{equation}
\label{eq:20160201d}
\frac{(f+g)(x + \lambda(y - x)) - (f+g)(x)}{\lambda} \to 
(f+g)^\prime(x, y - x)
%g^\prime(x,y-x) + \scal{y - x}{\nabla f(x)}
\ \text{as}\ \lambda \to 0
\end{equation}
and, since for every $z \in \dom g$, 
$g^\prime(x,z - x) \leq g(z) - g(x)$ \cite[Proposition~17.2]{Livre1}, 
\begin{equation}
\label{eq:20160201b}
(f+g)^\prime(x, y - x) = g^\prime(x,y-x) + \scal{y-x}{\nabla f(x)}
\leq g(y) - g(x) + \scal{y-x}{\nabla f(x)}.
\end{equation}
Thus, if $-\infty < (f+g)^\prime(x, y - x)<0$, we have
$(f+g)^\prime(x, y - x) < (1-\delta) (f+g)^\prime(x, y - x)$ and hence,
by \eqref{eq:20160201b},
\begin{equation}
\label{eq:20160201c}
(f+g)^\prime(x, y - x) < 
(1-\delta) \big( g(y) - g(x) + \scal{y-x}{\nabla f(x)}\big).
\end{equation}
Otherwise, if $(f+g)^\prime(x, y - x)=-\infty$, clearly
\eqref{eq:20160201c} still holds.
Therefore, in any case, 
it follows from \eqref{eq:20160201d} and \eqref{eq:20160201c} that
there exists $\lambda_0>0$ such that, 
$\forall\,\lambda \in \left]0,\lambda_0\right]$, 
\begin{equation*}
\frac{(f+g)(x + \lambda(y - x)) - (f+g)(x)}{\lambda} 
\leq (1 - \delta)\big( g(y)-g(x) + \scal{y - x}{\nabla f(x)} \big).
\end{equation*}
\end{proof}

%In view of Lemma~\ref{lem:Jproperties}, we can consider
%the following version of forward-backward with line search.
%Let $x_0 \in \hh$. Let $\sigma>0$ and let $\theta \in \left]0,1\right[$.
%\begin{equation*}
%\left|
%\begin{aligned}
%\label{eq:FBSlin}
%\gamma_k &= \sup\big\{ \gamma \in \RPP \,\vert\,  (\exists i \in \N)(\gamma = \sigma \theta^i) \\
%&\hspace{20ex}\gamma \norm{\nabla f (J(x_k,\gamma)) - \nabla f(x_k)} 
%\leq \delta \norm{J(x_k,\gamma) - x_k}\big\}.\\
%x_{k+1} &= J(x_k, \gamma_k) = \prox_{\gamma_k g}(x_k  - \gamma_k \nabla f(x_k)).
%\end{aligned}
%\right.
%\end{equation*}

\begin{remark}
\normalfont
\begin{enumerate}[(i)]
\item 
In view of Lemma~\ref{lem:linesearch}\ref{lem:Jpropertiesiii}-\ref{lem:Jpropertiesv}
(applied with respect to each metric $\scal{\cdot}{\cdot}_k$) 
and  \eqref{eq:20160204b}, 
the line searches \ref{LS1}, \ref{LS2}, and \ref{LS3} 
are well-defined for every $\delta, \theta \in \left]0,1\right[$ and every
$\bar{\lambda} \in \left]0,1\right]$ and $\bar{\gamma}>0$; and, since $y_k - x_k$
is a descent direction for $f+g$, when $y_k \neq x_k$ (see the subsequent Lemma~\ref{lem:FBS_step}),  
\ref{LS4} is well-defined too.
\item 
The line search methods we presented have different computational costs.
\ref{LS2} and \ref{LS4} are the cheapest one
since they require just 
one evaluation of $\nabla f$
and  $\prox_g$;  \ref{LS1}  
requires multiple evaluation of the proximity operator of $g$, therefore it is feasible
when computing $\prox_g$ is cheap.
\ref{LS3} is the most costly since it demands also to compute 
$\nabla f$ multiple times.
\end{enumerate}
\end{remark}

\subsection{An abstract convergence principle}

We present an abstract convergence theorem underlying 
the different versions of the variable metric forward-backward
splitting algorithm we will consider. It uses the property \ref{prop:genconv_b}
below that blends the 
concept of quasi-Fej\'er sequence with that of a sufficient decreasing condition.
%which is some how hidden in \cite{Cruz15}.
This result has the same flavor of that
 given in \cite[Section~2.3]{Att13}.

\begin{proposition}
\label{prop:genconv}
Let ${(\abs{\cdot}_k)}_{k \in \N}$ be a sequence of Hilbert norms on $\hh$
such that the sequence of the associated positive operators is strongly 
(that is pointwise) convergent in $(\hh, \norm{\cdot})$.
Suppose that there exists $\nu>0$ such that,
for every $k \in \N$, $\nu \norm{\cdot}^2 \leq \abs{\cdot}_k^2$.
Let $h \in \Gamma_0(\hh)$ and let $(x_k)_{k \in \N}$ be a sequence in $\dom h$.
Set $S_* = \argmin_\hh h$ and $S =\{x \in \hh \,\vert\, h(x) \leq \inf_{k \in \N} h(x_k)\}$.
Consider the following properties
\begin{enumerate}[$(a)$]
\item
\label{prop:genconv_a} 
$(h(x_k))_{k \in \N}$ is decreasing;
\item
\label{prop:genconv_b} 
There exist $(\alpha_k)_{k \in \N}$ in $\RPP^{\N}$, with 
$\sup_{k \in \N} \alpha_k<+\infty$, and 
$(\eta_k)_{k \in \N}$ and ${(\varepsilon_k)}_{k \in \N}$ in $\ell_+^1$,
such that
\begin{equation}
\label{eq:genconv}
(\forall\, x \in \dom h)(\forall\, k \in \N)\  \abs{x_{k+1} - x}_{k+1}^2 
\leq (1+\eta_k) \abs{x_k - x}_k^2 + 
2 \alpha_k \big( h(x) - h(x_{k+1})\big) + \varepsilon_k.
\end{equation}
\item
\label{prop:genconv_c} 
There exist
% two sequences 
 $(y_k)_{k \in \N}$
and $(v_k)_{k\in \N}$ in $\hh^\N$ such that,  $\forall\, k \in \N$, $v_k \in \partial h(y_k)$
and for every weakly convergent subsequence $(x_{n_k})_{k \in \N}$
of $(x_k)_{k \in \N}$, $x_{k_n} - y_{k_n}\rightharpoonup 0$ and $v_{n_k}\to 0$.
\end{enumerate}
Then the following hold. 
\begin{enumerate}
\item Suppose that \ref{prop:genconv_a} is satisfied.
\begin{enumerate}[{\rm(i)}]
%\item\label{prop:genconv_i} 
%${(h(x_k))}_{k \in \N}$ is decreasing.
\item\label{prop:genconv_i} 
If $(x_k)_{k \in \N}$ admits a bounded subsequence,
then $\inf_{k \in \N} h(x_k)>-\infty$ and $S \neq \varnothing$.\footnote{
$S_* \subset S$ and, since $h$ is a proper function, $S\neq \varnothing 
\Rightarrow \inf_{k \in \N} h(x_k)>-\infty$.}
\item\label{prop:genconv_ibis}
If $\inf_{k \in \N} h(x_k)=-\infty$, then $\norm{x_k}\to +\infty$ and  $h(x_k) \to \inf_\hh h$.
\end{enumerate}
\item 
Suppose that \ref{prop:genconv_a} and \ref{prop:genconv_b} are satisfied.
%Then the following hold.
\begin{enumerate}[resume*]
\item\label{prop:genconv_ii} 
If $\inf_{k \in \N} h(x_k)>-\infty$, then
$\sum_{k \in \N} \norm{x_{k+1} - x_k}^2<+\infty$.
\item\label{prop:genconv_iii} 
If $S\neq \varnothing$, then
$(x_k)_{k \in \N}$ is a quasi-Fej\'er sequence with respect to $S$
relative to the sequence of norms ${(\abs{\cdot}_k)}_{k \in \N}$.
\item\label{prop:genconv_vi}
Suppose that $S_*\neq \varnothing$. 
If $\sum_{k \in \N} \alpha_k = +\infty$,
then $h(x_{k}) \to \inf_\hh h$.
If  $\inf_{k \in \N} \alpha_k >0$, then $(h(x_{k}) - \inf_\hh h) = o(1/k)$.
\end{enumerate}
%Suppose that for every weakly convergent subsequence $(x_{n_k})_{k \in \N}$
%of $(x_k)_{k \in \N}$ there exist two sequences
%$(\hat{v}_{k})_{k \in \N}$ and $(\hat{y}_{k})_{k \in \N}$ in $\hh$ such that, for every $k \in \N$,
%$v_{k} \in \partial h(y_{k})$, $\hat{y}_{k} - x_{n_k} \to 0$ and $v_{k} \to 0$.
\item 
Suppose that \ref{prop:genconv_a}, \ref{prop:genconv_b}, 
and \ref{prop:genconv_c} are satisfied.
%Then the following hold.
\begin{enumerate}[resume*]
\item\label{prop:genconv_iv} 
If $\bar{x}\in \hh$ and $(x_{n_k})_{k \in \N}$ is a subsequence of $(x_k)_{k \in \N}$ such that
$x_{n_k} \rightharpoonup \bar{x}$,
then $\bar{x} \in S_*$ 
and $h(y_{n_k})\to \inf_\hh h$.
\item\label{prop:genconv_v}
Suppose that $S_* \neq \varnothing$. Then 
$(x_k)_{k \in \N}$ and $(y_k)_{k \in \N}$ converge weakly to the same point in $S_*$
and $h(y_k)\to \inf_\hh h$.
\item\label{prop:genconv_vii}
Suppose that $S_*=\varnothing$. Then $\norm{x_k} \to +\infty$
and $h(x_k) \to \inf_{\hh}h$.
\end{enumerate}
\end{enumerate}
\end{proposition}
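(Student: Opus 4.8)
The plan is to prove the items in the order stated, each building on the earlier ones, using repeatedly three ingredients: weak lower semicontinuity of $h$ (it lies in $\Gamma_0(\hh)$, hence never takes the value $-\infty$ and is weakly sequentially lower semicontinuous), Fact~\ref{p:fejer} on quasi-Fej\'er sequences, and Fact~\ref{l:20160110b} on summable-weighted nonincreasing sequences. For the items using only \ref{prop:genconv_a}: in \ref{prop:genconv_i} I would extract from a bounded subsequence a weakly convergent sub-subsequence $x_{n_k}\rightharpoonup\bar x$ and combine weak lower semicontinuity with monotonicity of $(h(x_k))_{k\in\N}$ to get $h(\bar x)\le\lim_k h(x_k)=\inf_{k\in\N}h(x_k)$; since $h(\bar x)>-\infty$, this gives $\inf_{k\in\N}h(x_k)>-\infty$ and $\bar x\in S$. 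Item \ref{prop:genconv_ibis} is the contrapositive ($\inf_{k\in\N}h(x_k)=-\infty$ forbids a bounded subsequence, so $\norm{x_k}\to+\infty$) together with monotone convergence of $h(x_k)$ to $-\infty=\inf_\hh h$.

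For the items that also use \ref{prop:genconv_b}, the workhorse is to specialize \eqref{eq:genconv}. Taking $x=x_k$ annihilates the first term and yields $\nu\norm{x_{k+1}-x_k}^2\le\abs{x_{k+1}-x_k}_{k+1}^2\le 2\alpha_k\big(h(x_k)-h(x_{k+1})\big)+\varepsilon_k$; summing, using $\sup_k\alpha_k<+\infty$, telescoping of the nonnegative differences $h(x_k)-h(x_{k+1})$ (total at most $h(x_0)-\inf_{k\in\N}h(x_k)<+\infty$) and $(\varepsilon_k)\in\ell^1_+$, proves \ref{prop:genconv_ii}. Taking instead $x\in S$ makes $h(x)-h(x_{k+1})\le0$, so \eqref{eq:genconv} collapses to the quasi-Fej\'er inequality \eqref{eq:fejerprop} with the same $\ell^1_+$ data, giving \ref{prop:genconv_iii}. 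For \ref{prop:genconv_vi}: since $S_*\subset S$, also $S\ne\varnothing$, so \ref{prop:genconv_iii} applies, and because the associated positive operators converge strongly, Fact~\ref{p:fejer}\ref{p:fejeri} gives that $(x_k)_{k\in\N}$ is bounded and $(\abs{x_k-x^*}_k)_{k\in\N}$ converges for $x^*\in S_*$; rewriting \eqref{eq:genconv} at $x=x^*$ as $2\alpha_k\big(h(x_{k+1})-\inf_\hh h\big)\le\abs{x_k-x^*}_k^2-\abs{x_{k+1}-x^*}_{k+1}^2+\eta_k\abs{x_k-x^*}_k^2+\varepsilon_k$ and summing (the right side telescopes up to an $\ell^1_+$ remainder, and $\eta_k\abs{x_k-x^*}_k^2$ is summable since $(\abs{x_k-x^*}_k)_{k\in\N}$ is bounded) gives $\sum_k\alpha_k\big(h(x_{k+1})-\inf_\hh h\big)<+\infty$. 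Applying Fact~\ref{l:20160110b} to the nonnegative nonincreasing $\rho_k=h(x_{k+1})-\inf_\hh h$ with weights $\alpha_k$ then gives $h(x_{k+1})\to\inf_\hh h$ when $\sum_k\alpha_k=+\infty$ and $\rho_k=o(1/k)$ when $\inf_k\alpha_k>0$ (an index shift is immaterial).

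For the items also using \ref{prop:genconv_c}, the heart is \ref{prop:genconv_iv}. Given $x_{n_k}\rightharpoonup\bar x$, property \ref{prop:genconv_c} supplies $y_{n_k}\rightharpoonup\bar x$ and $v_{n_k}\to0$ with $v_{n_k}\in\partial h(y_{n_k})$; the subgradient inequality $h(z)\ge h(y_{n_k})+\scal{z-y_{n_k}}{v_{n_k}}$, combined with boundedness of $(y_{n_k})_{k\in\N}$ and $v_{n_k}\to0$, forces $\limsup_k h(y_{n_k})\le h(z)$ for every $z\in\hh$, hence $\le\inf_\hh h$; thus $h(y_{n_k})\to\inf_\hh h$, and weak lower semicontinuity gives $h(\bar x)\le\inf_\hh h$, i.e. $\bar x\in S_*$ (in particular $\inf_\hh h>-\infty$). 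For \ref{prop:genconv_v}: $S_*\ne\varnothing$ gives $S\ne\varnothing$, so by \ref{prop:genconv_iii} and Fact~\ref{p:fejer}\ref{p:fejeri} the sequence $(x_k)_{k\in\N}$ is bounded; by \ref{prop:genconv_iv} each of its weak cluster points lies in $S_*\subset S$, so Fact~\ref{p:fejer}\ref{p:fejerii} yields $x_k\rightharpoonup\bar x$ for some $\bar x\in S_*$, after which \ref{prop:genconv_c} and \ref{prop:genconv_iv} applied to the full (weakly convergent) sequence give $y_k\rightharpoonup\bar x$ and $h(y_k)\to\inf_\hh h$. For \ref{prop:genconv_vii}: any bounded subsequence of $(x_k)_{k\in\N}$ would, through \ref{prop:genconv_iv}, produce a point of $S_*=\varnothing$, so $\norm{x_k}\to+\infty$; and if $\ell:=\lim_k h(x_k)$ were finite with $\ell>\inf_\hh h$, then $S=\{x\in\hh : h(x)\le\ell\}\ne\varnothing$ and \ref{prop:genconv_iii} together with Fact~\ref{p:fejer}\ref{p:fejeri} would make $(x_k)_{k\in\N}$ bounded, a contradiction; hence necessarily $\ell=\inf_\hh h$ (the value $-\infty$ included), that is $h(x_k)\to\inf_\hh h$.

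The main obstacle is \ref{prop:genconv_iv}: one has to draw ``$\bar x$ minimizes $h$'' out of the purely weak-limit information packaged in \ref{prop:genconv_c}, and the viable route is to exploit the auxiliary subgradients $v_k$ in the variational inequality, rather than trying to pass gradient-type information to the limit. Once this is in place, the remaining items are just combinations of it with the quasi-Fej\'er/boundedness machinery; the only other delicate point is the three-way case analysis in \ref{prop:genconv_vii}, which must accommodate both $\inf_\hh h=-\infty$ and the degenerate possibility $S=\varnothing$ without any appeal to compactness.
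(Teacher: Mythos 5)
Your proposal is correct and takes essentially the same route as the paper: specialize \eqref{eq:genconv} at $x=x_k$, at points of $S$, and at a minimizer, then combine monotonicity and weak lower semicontinuity with Fact~\ref{p:fejer} and Fact~\ref{l:20160110b}, and handle the unbounded/empty-$S_*$ cases by the same contradiction via quasi-Fej\'er boundedness. The only (immaterial) deviation is in item \ref{prop:genconv_iv}, where the paper first deduces $0\in\partial h(\bar x)$ from the demiclosedness of $\partial h$ and then tests the subgradient inequality at $\bar x$, whereas you test it at arbitrary $z$ to get $\limsup_k h(y_{n_k})\leq\inf_\hh h$ and conclude $\bar x\in S_*$ by weak lower semicontinuity; the two arguments are interchangeable.
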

\begin{proof}
\ref{prop:genconv_i}:
Suppose that $(x_k)_{k \in \N}$ has a bounded subsequence.
Then there exists a subsequence 
$(x_{n_k})_{k \in \N}$ of $(x_k)_{k \in \N}$ and $\bar{x} \in \hh$ such that
$x_{n_k} \rightharpoonup \bar{x}$. Since $h$ is lower semicontinuous and 
$(h(x_k))_{k \in \N}$ is decreasing, 
 $-\infty < h(\bar{x}) \leq \liminf_{k} h(x_k) = \inf_{k\in \N} h(x_k)$.

\ref{prop:genconv_ibis}:
Since $\inf_{k \in \N} h(x_k)=-\infty$ and $(h(x_k))_{k \in \N}$ is decreasing,
then $h(x_k) \to -\infty = \inf_\hh h$. 
Moreover, it follows from \ref{prop:genconv_i} that
$(x_{k})_{k \in \N}$ does not have any bounded subsequence,
hence $\liminf_k \norm{x_k}\to +\infty$.

\ref{prop:genconv_ii}:
%We note that, by \eqref{eq:genconv}, $x_{k+1} \in \dom h$, for every $k \in \N$.
Taking $x = x_k$ in \eqref{eq:genconv}, we have
\begin{equation}
\label{eq:20160109a}
(\forall\, k \in \N)\qquad\nu \norm{x_{k+1}- x_k}^2 \leq \abs{x_{k+1}- x_k}_{k+1}^2 \leq 2 \alpha_k \big( h(x_k) - h(x_{k+1})\big) + \varepsilon_k.
\end{equation}
Now note that, if $\inf_{k \in \N} h(x_k)>-\infty$, the sequence 
${\big(h(x_k) - h(x_{k+1}) \big)}_{k\in \N}$ is summable (and positive).
Therefore, since $(\alpha_k)_{k \in \N}$ is bounded
%$\sup_{k \in \N}\alpha_k<+\infty$ 
and ${(\varepsilon_k)}_{k \in \N}$ is summable,
%$\sum_{k \in \N} \varepsilon_k<+\infty$, 
the right hand side of \eqref{eq:20160109a}
is summable.

\ref{prop:genconv_iii}:
Suppose that $S\neq \varnothing$ and let $x_* \in S$. Then it follows from \eqref{eq:genconv} with $x = x^*$ that,
for every $k \in \N$, 
$\abs{x_{k+1} - x_*}_{k+1}^2 \leq (1+\eta_k) \abs{x_k - x_*}_k^2 + \varepsilon_k$
%Therefore $(x_k)_{k \in \N}$ is a  quasi-Fej\'er sequence with respect to $S$
%relative to ${(\abs{\cdot}_k)}_{k \in \N}$ 
(see Definition~\ref{def:fejer}).

\ref{prop:genconv_vi}:
Let $x_* \in S_*$. 
It follows from \ref{prop:genconv_iii} that
$(x_k)_{k \in \N}$
is a quasi-Fejer sequence with respect to $S_*$ relative to 
 ${(\abs{\cdot}_k)}_{k \in \N}$,  hence Fact~\ref{p:fejer}\ref{p:fejeri}
yields that ${\big( \abs{x_k - x_*}_k^2\big)}_{k \in \N}$ is bounded.
It follows from \eqref{eq:genconv} with $x = x_*$
that, for every $k \in \N$
\begin{equation}
\label{eq:20160125a}
2\alpha_k\big( h(x_{k+1}) - h(x_*) \big)
\leq \abs{x_k - x_*}_k^2 - \abs{x_{k+1} - x_*}_{k+1}^2 
+ \eta_k \abs{x_k - x_*}_k^2
+ \varepsilon_k.
\end{equation}
Thus, $(h(x_{k+1})- \inf_\hh h)_{k \in \N}$ is a 
sequence in $\R_+$ which is decreasing and moreover, since 
 ${\big( \abs{x_k - x_*}_k^2\big)}_{k \in \N}$ is bounded, the right hand side 
 of \eqref{eq:20160125a}
is summable, and hence the sequence
${\big(\alpha_k(h(x_{k+1})- \inf_\hh h)\big)}_{k \in \N}$ is summable. 
The statement follows from 
Fact~\ref{l:20160110b}.

\ref{prop:genconv_iv}: 
%In view of Proposition~\ref{p:fejer} we need to prove that
%the weak cluster points of $(x_k)_{k \in \N}$ are contained in $S_*$.
%Thus, 
Let $\bar{x} \in \hh$ and let $(x_{n_k})_{k \in \N}$ be a subsequence
of $(x_k)_{k \in \N}$ such that $x_{n_k} \rightharpoonup \bar{x}$.
Since $y_{n_k} - x_{n_k} \rightharpoonup 0$, we have
$y_{n_k} \rightharpoonup \bar{x}$.
Then, since, for every $k \in \N$, $v_{n_k} \in \partial h(y_{n_k})$, $v_{n_k} \to 0$,
 and $\partial h$ is demiclosed \cite{Livre1},
then we have $0 \in \partial h(\bar{x})$, hence $\bar{x} \in S_*$. 
Moreover,  
$v_{n_k} \in \partial h(y_{n_k})$ yields, 
\begin{equation*}
(\forall\, k \in \N)\quad h(y_{n_k}) \leq h(\bar{x}) + \scal{ y_{n_k} - \bar{x}}{v_{n_k}}
\end{equation*}
and $\scal{ y_{n_k} - \bar{x}}{v_{n_k}} \to 0$, for 
$y_{n_k} \rightharpoonup \bar{x}$ and $v_{n_k} \to 0$.
Thus, by the lower semicontinuity of $h$,
$h(\bar{x}) \leq \liminf_k h(y_{n_k}) 
\leq \limsup_k h(y_{n_k}) \leq h(\bar{x})$,
that is $h(y_{n_k}) \to h(\bar{x}) = \inf_\hh h$.

\ref{prop:genconv_v}:
Suppose $S_*\neq \varnothing$. Then, by
\ref{prop:genconv_iii},  $(x_k)_{k \in \N}$ is a quasi-Fej\'er sequence
with respect to $S_*$ relative to ${(\abs{\cdot}_k)}_{k \in \N}$.
Moreover, \ref{prop:genconv_iv} yields that
every weak sequential cluster point of $(x_k)_{k \in \N}$
belongs to  $S_*$. Thus, it follows
from Fact~\ref{p:fejer}\ref{p:fejerii} that $(x_{k})_{k \in \N}$ converges
weakly to a point in $S_*$. Then, by applying 
 \ref{prop:genconv_iv} and  property \ref{prop:genconv_c} to 
the entire sequence $(x_{k})_{k \in \N}$
we derive that $h(y_{k}) \to \inf_\hh h$ and $y_k - x_k \rightharpoonup 0$.

\ref{prop:genconv_vii}: It follows from \ref{prop:genconv_iv}
that $(x_k)_{k \in \N}$ does not have any bounded subsequence.
Therefore $\liminf_k \norm{x_k} = +\infty$. If it was $\inf_k h(x_k)>\inf_\hh h$,
then the set $S$ would be nonempty and \ref{prop:genconv_iii}
would yield that $(x_k)_{k \in \N}$ is a quasi-Fej\'er sequence 
and hence bounded. Thus, necessarily $h(x_k) \to \inf_k h(x_k)=\inf_\hh h$.
\end{proof}

\begin{remark}
\normalfont
If in \eqref{eq:genconv} we consider stationary metrics and 
replace $h(x_{k+1})$ by $h(x_k)$,
we obtain the notion of
 \emph{modified Fej\'er sequences} 
 introduced in \cite{Lin15}.
The authors show that that concept
is useful to analyze the convergence in function values of 
 splitting algorithms. 
 However the convergence
of the iterates is not studied.
\end{remark}
%\begin{remark}\
%\normalfont
%\begin{enumerate}[(i)]
%\item Property \ref{prop:genconv_b} above is a generalization of 
%the concept of  quasi-Fej\'er sequence. Indeed if in \eqref{eq:genconv} we take $h = \iota_S$, we obtain \eqref{eq:fejerprop}.
%\item 
%If in \eqref{eq:genconv} we consider stationary metrics and 
%replace $h(x_{k+1})$ by $h(x_k)$,
%we obtain the notion of
% \emph{modified Fej\'er sequences} 
% introduced in \cite{Lin15}.
%The authors show that that concept
%is useful to analyze the convergence in function values of 
% splitting algorithms. 
% However the convergence
%of the iterates is not studied.
%\end{enumerate}
%\end{remark}

Now we  give the general theorem of convergence for 
variable metric algorithms.

\begin{theorem}
\label{thm:convergence02}
Under the assumption of Proposition~\ref{prop:genconv},
 suppose that  \ref{prop:genconv_a}
in Proposition~\ref{prop:genconv} is satisfied and that, 
if $\inf_{k \in \N} h(x_k)>-\infty$, conditions
\ref{prop:genconv_b} and  \ref{prop:genconv_c}
in Proposition~\ref{prop:genconv} are satisfied for some $(\alpha_k)_{k \in \N}$
and $(y_k)_{k \in \N}$. 
Then the following hold.
\begin{enumerate}[{\rm(i)}]
%\item\label{thm:convergence02i} ${(h(x_k))}_{k \in \N}$ is decreasing.
\item\label{thm:convergence02ii}  If $\inf_{k \in \N} h(x_k)>-\infty$, then
$\sum_{k \in \N} \norm{x_{k+1} - x_k}^2<+\infty$.
\item\label{thm:convergence2i} 
Suppose that $S_* \neq \varnothing$. Then 
\begin{enumerate}[$(a)$]
\item  $(x_{k})_{k \in \N}$ and  $(y_{k})_{k \in \N}$ weakly converge 
to the same point in $S_*$.
\item $h(y_k) \to \inf_{\hh}h$.
\item If $\sum_{k \in \N} \alpha_k = +\infty$,
 then $h(x_k) \to \inf_{\hh}h$.
\item  If  $\inf_{k \in \N} \alpha_k> 0$, then
$ h(x_k) - \inf_{\hh}h = o(1/k)$.
\end{enumerate}
\item\label{thm:convergence2ii}  
If $S_* = \varnothing$, then 
 $\norm{x_k}\to +\infty$ and $h(x_k) \to \inf_\hh h$.
\end{enumerate}
\end{theorem}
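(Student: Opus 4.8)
The plan is to deduce Theorem~\ref{thm:convergence02} from Proposition~\ref{prop:genconv} by a case distinction on the value of $\inf_{k \in \N} h(x_k)$. First I would observe that the hypotheses always grant property~\ref{prop:genconv_a}, so the parts of Proposition~\ref{prop:genconv} that require only \ref{prop:genconv_a} --- namely \ref{prop:genconv_i} and \ref{prop:genconv_ibis} --- are available unconditionally. The delicate point is that \ref{prop:genconv_b} and \ref{prop:genconv_c} are only assumed \emph{under the proviso} $\inf_{k \in \N} h(x_k)>-\infty$; so before invoking any conclusion that rests on them I must verify we are in that regime.

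For part \ref{thm:convergence02ii} there is nothing to do beyond citing Proposition~\ref{prop:genconv}\ref{prop:genconv_ii}: its hypothesis is exactly $\inf_{k\in\N}h(x_k)>-\infty$, which is also the proviso under which \ref{prop:genconv_b} holds, so the square-summability of $\norm{x_{k+1}-x_k}$ follows directly. For part \ref{thm:convergence2i}, I would first argue that $S_*\neq\varnothing$ forces $\inf_{k\in\N}h(x_k)\geq\inf_\hh h>-\infty$; hence \ref{prop:genconv_b} and \ref{prop:genconv_c} are in force and Proposition~\ref{prop:genconv}\ref{prop:genconv_v} gives the weak convergence of $(x_k)$ and $(y_k)$ to a common point of $S_*$ together with $h(y_k)\to\inf_\hh h$, i.e.\ items $(a)$ and $(b)$. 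Items $(c)$ and $(d)$ are then immediate from Proposition~\ref{prop:genconv}\ref{prop:genconv_vi} applied with the corresponding condition on $(\alpha_k)$ ($\sum_k\alpha_k=+\infty$, resp.\ $\inf_k\alpha_k>0$).

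Part \ref{thm:convergence2ii} is where the only real argument is needed. Assume $S_*=\varnothing$. I would split into two subcases according to whether $\inf_{k\in\N}h(x_k)=-\infty$ or $>-\infty$. In the first subcase, Proposition~\ref{prop:genconv}\ref{prop:genconv_ibis} directly yields $\norm{x_k}\to+\infty$ and $h(x_k)\to\inf_\hh h=-\infty$. In the second subcase the proviso holds, so \ref{prop:genconv_b} and \ref{prop:genconv_c} are available, and Proposition~\ref{prop:genconv}\ref{prop:genconv_vii} gives exactly $\norm{x_k}\to+\infty$ and $h(x_k)\to\inf_\hh h$. Thus in both subcases the conclusion holds.

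The main obstacle --- really the only subtlety --- is bookkeeping the conditional availability of \ref{prop:genconv_b}/\ref{prop:genconv_c}: one must always first pin down whether $\inf_{k\in\N}h(x_k)$ is finite before reaching for any statement that uses them, and the key enabling fact is the trivial implication $S_*\neq\varnothing\Rightarrow\inf_{k\in\N}h(x_k)\geq\inf_\hh h>-\infty$, which lets part \ref{thm:convergence2i} go through, and the complementary case split $\inf_{k\in\N}h(x_k)=-\infty$ versus $>-\infty$, which handles part \ref{thm:convergence2ii}. Everything else is a direct transcription of the corresponding items of Proposition~\ref{prop:genconv}.
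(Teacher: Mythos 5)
Your proof is correct and follows essentially the same route as the paper: the paper's own (very terse) argument is precisely the case split on whether $\inf_{k\in\N}h(x_k)=-\infty$ (in which case $S_*=\varnothing$ and Proposition~\ref{prop:genconv}\ref{prop:genconv_ibis} applies) or $\inf_{k\in\N}h(x_k)>-\infty$ (in which case \ref{prop:genconv_a}--\ref{prop:genconv_c} all hold and the remaining items of Proposition~\ref{prop:genconv} deliver the conclusions). Your bookkeeping, including the observation that $S_*\neq\varnothing$ forces $\inf_{k\in\N}h(x_k)\geq\inf_\hh h>-\infty$, matches the paper's reasoning exactly.
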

\begin{proof}
If $\inf_{k \in \N} h(x_k) = -\infty$, we are in the case $S_* = \varnothing$
and the statement follows from Proposition~\ref{prop:genconv}\ref{prop:genconv_ibis}.
If $\inf_{k \in \N} h(x_k) > -\infty$, then conditions \ref{prop:genconv_a},
\ref{prop:genconv_b} and \ref{prop:genconv_c} in Proposition~\ref{prop:genconv}
are satisfied and the conclusions follow from Proposition~\ref{prop:genconv}.
\end{proof}

\subsection{Convergence theorems}
\label{sec:convthms}

In this section we finally address the convergence of (VM-FBS) with  
line searches \ref{LS1}--\ref{LS3}. In addition to {\bf H1}--{\bf H3}, we will also consider one of the following assumptions on the metrics.
\begin{description}[itemsep=0mm]
\item [ H4] There exists $(\eta_k)_{k \in \N} \in \ell_+^1$ 
such that, for every $k \in \N$, $\norm{\cdot}^2_{k+1} \leq (1+\eta_k) \norm{\cdot}^2_k$.
\item [ H5] $\sum_{k \in \N} (\mu_k - \nu_k)<+\infty$, where, $\forall\, k \in \N$, 
$\nu_k$ and $\mu_k$
are respectively the minimum and maximum eigenvalue of the positive operator $W_k$
associated to $\scal{\cdot}{\cdot}_k$.
\end{description}

\begin{remark}\
\normalfont
\begin{enumerate}[(i)]
\item
 {\bf H4} can be equivalently written as $W_{k+1} \preccurlyeq (1+\eta_k) W_k$ 
 and it was considered in \cite{Par08} for the proximal point algorithm
and in \cite{Bon15a,Comb2014} for the forward-backward algorithm. 
In view of Fact~\ref{p:fejer},  {\bf H4} implies that $W_k$ strongly converges
to some positive operator.
\item
{\bf H5} encompasses and generalizes the condition assumed in \cite{Bon15}
for the scaled gradient projection method, where the $W_k$'s
are indeed forced to converge to the identity operator
at certain rate. By contrast, we stress that {\bf H5} does not implies that
the $W_k$'s strongly converge: just take 
 $W_k = \mu_k \mathrm{Id}$ with $(\mu_k)_{k \in \N}$
 a non convergent bounded sequence in 
$\left[\varepsilon, +\infty \right[$ for some $\varepsilon>0$.
However, {\bf H5} implies that, as $k \to +\infty$, $W_k$ takes the
form of a multiple of the identity operator, but the multiplicative constant may continue
to vary with $k$.
\end{enumerate}
\end{remark}

The following result is fundamental 
and analyzes just one step of {\rm (VM-FBS)}.
%We state it 
%For that reason, we can avoid to refer to the variable metric 
%and state the result 
So, we can avoid to refer to the variable metric  and state the
result
in the metric $\scal{\cdot}{\cdot}$ of the space $\hh$.
%--- this also makes the argument clearer.
Items \ref{lem:FBS_stepi}--\ref{lem:FBS_stepii} below are standard and appear 
explicitly in \cite[Lemma~1]{Tseng09}.

\begin{lemma}
\label{lem:FBS_step}
Assume  {\bf H1} 
and that $f$ is G\^ateaux differentiable on $\dom g$.
Let $k \in \N$ and let $\gamma_k \in \RPP$
and $\lambda_k \in \left]0,1\right]$.
Let $x_k \in \dom g$ and set
$y_k = \prox_{\gamma_k g} (x_k - \gamma_k \nabla f(x_k))$
and $x_{k+1} =J(x_k, \gamma_k,\lambda_k) =  x_k +  \lambda_k (y_k - x_k)$.
Then the following hold.

\begin{enumerate}[{\rm(i)}]
\item \label{lem:FBS_stepi}
$\displaystyle g(y_k) - g(x_k) + \scal{y_k - x_k}{\nabla f(x_k)} \leq 
- \frac{\norm{y_k - x_k}^2}{\gamma_k}$.
\item \label{lem:FBS_stepii}
$\displaystyle(f+g)^\prime(x_k, y_k-x_k) \leq - \frac{\norm{y_k - x_k}^2}{\gamma_k}$; 
in particular if $y_k \neq x_k$, then $y_k - x_k$ is a descent direction for $f+g$.\\[-1ex]
\item\label{lem:FBS_stepiii}
$\forall\, x \in \dom g$,
\begin{multline*}
\norm{x_{k+1} - x}^2 \leq \norm{x_k - x}^2
+ 2 \gamma_k \lambda_k \big( (f+g)(x) - (f+g)(x_k)\big)\\
- 2 \gamma_k \lambda_k \big(g(y_{k})- g(x_k)  
+ \scal{y_{k} - x_k}{\nabla f(x_k)} \big) - \norm{x_{k+1} - x_k}^2.
\end{multline*}
\end{enumerate}
\end{lemma}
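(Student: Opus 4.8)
\textbf{Proof plan for Lemma~\ref{lem:FBS_step}.}
The plan is to derive all three items from the optimality characterization of the proximity operator together with convexity of $f$ and $g$. For item \ref{lem:FBS_stepi}, I would start from \eqref{eq:20160114a}: since $y_k = \prox_{\gamma_k g}(x_k - \gamma_k\nabla f(x_k))$, we have $(x_k - \gamma_k\nabla f(x_k)) - y_k \in \gamma_k\,\partial g(y_k)$, i.e. $(x_k - y_k)/\gamma_k - \nabla f(x_k) \in \partial g(y_k)$. Testing the subgradient inequality for $g$ at the point $x_k$, namely $g(x_k) \geq g(y_k) + \scal{x_k - y_k}{(x_k - y_k)/\gamma_k - \nabla f(x_k)}$, and rearranging gives exactly $g(y_k) - g(x_k) + \scal{y_k - x_k}{\nabla f(x_k)} \leq -\norm{y_k - x_k}^2/\gamma_k$.

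For item \ref{lem:FBS_stepii}, I would use that $f$ is G\^ateaux differentiable at $x_k$, so $(f+g)'(x_k, y_k - x_k) = \scal{y_k - x_k}{\nabla f(x_k)} + g'(x_k, y_k - x_k)$, and then invoke the standard bound $g'(x_k, y_k - x_k) \leq g(y_k) - g(x_k)$ (as in \cite[Proposition~17.2]{Livre1}, already used in the proof of Lemma~\ref{lem:linesearch}). Combining with item \ref{lem:FBS_stepi} yields $(f+g)'(x_k, y_k - x_k) \leq -\norm{y_k - x_k}^2/\gamma_k$; the descent-direction claim when $y_k \neq x_k$ is then immediate. For item \ref{lem:FBS_stepiii}, I would expand $\norm{x_{k+1} - x}^2 = \norm{x_k - x}^2 + 2\scal{x_{k+1} - x_k}{x_k - x} + \norm{x_{k+1} - x_k}^2$ and rewrite $\scal{x_{k+1} - x_k}{x_k - x} = \scal{x_{k+1} - x_k}{x_{k+1} - x} - \norm{x_{k+1} - x_k}^2$, so that $\norm{x_{k+1} - x}^2 = \norm{x_k - x}^2 + 2\scal{x_{k+1} - x_k}{x_{k+1} - x} - \norm{x_{k+1} - x_k}^2$. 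Since $x_{k+1} - x_k = \lambda_k(y_k - x_k)$, the key is to bound $\scal{y_k - x_k}{x_{k+1} - x}$; using $x_{k+1} - x = \lambda_k(y_k - x) + (1-\lambda_k)(x_k - x)$ reduces this to bounding $\scal{y_k - x_k}{y_k - x}$ and $\scal{y_k - x_k}{x_k - x}$. For the first, apply the subgradient inequality for $g$ at $x$ with the subgradient $(x_k - y_k)/\gamma_k - \nabla f(x_k) \in \partial g(y_k)$, giving $\scal{y_k - x}{(x_k - y_k)/\gamma_k - \nabla f(x_k)} \leq g(x) - g(y_k)$; for the cross terms involving $\nabla f(x_k)$, use convexity of $f$: $\scal{y_k - x}{\nabla f(x_k)} = \scal{y_k - x_k}{\nabla f(x_k)} + \scal{x_k - x}{\nabla f(x_k)} \geq \scal{y_k - x_k}{\nabla f(x_k)} + f(x_k) - f(x)$.

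Assembling these inequalities and tracking the $\gamma_k$ and $\lambda_k$ factors should produce the claimed estimate, with the term $-2\gamma_k\lambda_k\big(g(y_k) - g(x_k) + \scal{y_k - x_k}{\nabla f(x_k)}\big)$ arising from the gap between the subgradient bound at $x$ (which references $y_k$) and the desired expression (which references $x_k$), and the $\lambda_k(1-\lambda_k)$-type contributions collapsing favorably because $\lambda_k \in \left]0,1\right]$. I expect the main obstacle to be purely bookkeeping: carefully choosing which convexity inequality to test at which point and matching the $1/\gamma_k$ weights so that the telescoping Fej\'er-type structure emerges cleanly, rather than any conceptual difficulty — this is why the authors flag items \ref{lem:FBS_stepi}--\ref{lem:FBS_stepii} as standard and cite \cite[Lemma~1]{Tseng09} for them.
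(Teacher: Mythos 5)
Your proposal is correct and takes essentially the same route as the paper: (i) and (ii) are exactly the paper's argument, and for (iii) the paper likewise combines the subgradient inequality for $g$ at $y_k$ with the prox-induced subgradient $(x_k - y_k)/\gamma_k - \nabla f(x_k)$, the convexity bound $f(x) - f(x_k) \geq \scal{x - x_k}{\nabla f(x_k)}$, the polarization identity $2\scal{x - x_k}{x_k - x_{k+1}} = \norm{x_{k+1} - x}^2 - \norm{x_k - x}^2 - \norm{x_{k+1} - x_k}^2$, and $2/\lambda_k \geq 2$, your convex-combination decomposition of $x_{k+1} - x$ being only a reorganization of the same bookkeeping. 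Note one harmless sign slip: the subgradient inequality gives $\scal{x - y_k}{(x_k - y_k)/\gamma_k - \nabla f(x_k)} \leq g(x) - g(y_k)$, not the inequality with $y_k - x$ on the left as written, but since this subgradient enters your bounds multiplied by $-\gamma_k$ (because $y_k - x_k = -\gamma_k\big((x_k - y_k)/\gamma_k - \nabla f(x_k)\big) - \gamma_k \nabla f(x_k)$), the direction you actually need is the correct one and the assembly yields the claimed estimate.
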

\begin{proof}
\ref{lem:FBS_stepi}:
By the definition of $y_k$ and \eqref{eq:20160114a} we derive that
$(x_k - y_k)/\gamma_k - \nabla f(x_k) \in \partial g(y_k)$, and hence
that, for every $x \in \dom g$,
%\begin{equation}
%\label{eq:20160129a}
%\frac{x_k - y_k}{\gamma_k} - \nabla f(x_k) \in \partial g(y_k).
%\end{equation}
%Then, by \eqref{eq:20160129a} and the definition 
%of the subdifferential,
%we have
\begin{align}
\nonumber
g(x)  
%& \geq g(y_k) 
%+ \Big\langle x - y_k~\Big|~\frac{x_k - y_k}{\gamma_k} - \nabla f(x_k) \Big\rangle\\
\nonumber&\geq g(y_k)  +  \Big\langle x - x_k~\Big|~\frac{x_k - y_k}{\gamma_k} 
- \nabla f(x_k) \Big\rangle
+ \Big\langle x_k - y_k~\Big|~\frac{x_k - y_k}{\gamma_k} - \nabla f(x_k) \Big\rangle\\
\label{eq:20160402b} & =  g(y_k)  +  \frac{1}{\gamma_k}\scal{x - x_k}{x_k - y_k} 
- \scal{x - y_k}{\nabla f(x_k)} +  \frac{\norm{x_k - y_k}^2}{\gamma_k}.
\end{align}
Taking $x = x_k$ in the above inequality, \ref{lem:FBS_stepi} follows.

\ref{lem:FBS_stepii}:
It follows from \ref{lem:FBS_stepi} and the fact that 
 $g^\prime(x_k,y_k - x_k) \leq g(y_k) - g(x_k)$.

\ref{lem:FBS_stepiii}:
Let $x \in \dom g$. Since $f$ is convex,
$f(x) - f(x_k) \geq  \scal{x - x_k}{\nabla f(x_k)}$.
Thus, it follows from \eqref{eq:20160402b}  
that
\begin{align*}
 (f+g)(x) - (f+g)(x_k) \geq g(y_{k}) - g(x_k) 
&+ \scal{y_{k} - x_k}{\nabla f(x_k)}\\
&+\frac{1}{\gamma_k}\scal{x - x_k}{x_k - y_{k}}
+  \frac{\norm{y_{k} - x_k}^2}{\gamma_k} .
\end{align*}
Now, multiplying the above inequality by $2 \gamma_k\lambda_k$
we obtain
\begin{equation*}
\begin{aligned}
2 \gamma_k \lambda_k \big( (f+g)(x) - (f+g)(x_k)\big) &\geq
2 \gamma_k \lambda_k \big(g(y_{k})- g(x_k) + \scal{y_{k}-x_k}{\nabla f(x_k)} \big)\\
&\qquad + 2 \scal{x - x_k}{x_k - x_{k+1}}
+2 \frac{\norm{x_k - x_{k+1}}^2}{\lambda_k} .
\end{aligned}
\end{equation*}
Finally, since 
$2 \scal{x - x_k}{x_k - x_{k+1}} = \norm{x_{k+1} - x}^2 
- \norm{x_k - x}^2 - \norm{x_{k+1} - x_k}^2$, 
\begin{align}
\nonumber 2 \gamma_k \lambda_k \big( (f+g)(x) &- (f+g)(x_k)\big) \geq
2 \gamma_k \lambda_k \big(g(y_{k})- g(x_k) + \scal{y_{k} - x_k}{\nabla f(x_k)}\big)\\ 
\label{eq:20160202e} &\qquad+ \norm{x_{k+1} - x}^2 
- \norm{x_k - x}^2 - \norm{x_{k+1} - x_k}^2
+2\frac{\norm{x_{k+1} - x_k}^2}{\lambda_k}
\end{align}
and hence, since $2/\lambda_k \geq 2$, the statement follows.
\qquad\end{proof}

In view of  Proposition~\ref{prop:genconv}\ref{prop:genconv_b} and 
Lemma~\ref{lem:FBS_step}\ref{lem:FBS_stepiii}, it is clear that
to obtain convergence of algorithm (VM-FBS), we need
to ensure that the positive quantity
\begin{equation}
\label{eq:20160203g}
- \big(g(y_{k})- g(x_k) + \scal{y_{k} - x_k}{\nabla f(x_k)} \big)
\end{equation}
is summable. 
Now we show how the various
line search methods are related to each other
and the role that they play in making 
\eqref{eq:20160203g} summable.

\begin{proposition}
\label{lem:20160129a}
Under the same hypotheses of Lemma~\ref{lem:FBS_step}, 
let $\delta \in \left]0,1\right[$
and consider the following statements
\begin{enumerate}[$(a)$]
\item\label{lem:20160129a_a} 
$\displaystyle
\norm{ \nabla f(x_{k+1}) - \nabla f(x_k)} 
 \leq \frac{\delta}{ \gamma_k \lambda_k} \norm{x_{k+1} - x_k}$.
\item\label{lem:20160129a_b}
$\displaystyle f(x_{k+1}) - f(x_k) - \scal{x_{k+1} -x_k}{\nabla f(x_k)}
 \leq \frac{\delta}{ \gamma_k \lambda_k} \norm{x_{k+1} - x_k}^2$.\\[-0.5ex]
 \item\label{lem:20160129a_c}
 $\displaystyle (f+g)(x_{k+1}) - (f+g)(x_k) \leq (1-\delta)\lambda_k \big(g(y_{k}) - g(x_k) 
 + \scal{y_{k} -x_k}{\nabla f(x_k)} \big)$.
\end{enumerate}

Then the following hold.
\begin{enumerate}[{\rm(i)}]
\item\label{lem:20160129ai} 
\ref{lem:20160129a_a} $\Rightarrow$ \ref{lem:20160129a_b}
 $\Rightarrow$ \ref{lem:20160129a_c}.
\item \label{lem:20160129aiii}
If \ref{lem:20160129a_c} holds, then,
for every $x \in \dom g$,
\begin{equation}
\begin{aligned}
\label{eq:20160202d}
\norm{x_{k+1} - x}^2 \leq \norm{x_k - x}^2
&+ 2 \gamma_k \lambda_k \big( (f+g)(x) - (f+g)(x_{k})\big)\\[0.5ex]
&+ \frac{2 \gamma_k}{1 - \delta}\big( (f+g)(x_{k}) - (f+g)(x_{k+1}) \big)
- \norm{x_{k+1} - x_k}^2.
\end{aligned}
\end{equation}
\item \label{lem:20160129aiv}
If  \ref{lem:20160129a_c} holds, then
\begin{equation*}
(1-\delta) \norm{x_{k+1} - x_k}^2 \leq \gamma_k \big( (f+g)(x_{k}) - (f+g)(x_{k+1}) \big);
\end{equation*}
in particular $(f+g)(x_{k+1}) \leq (f+g)(x_{k})$.
\end{enumerate}
\end{proposition}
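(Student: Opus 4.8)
The plan is to prove the three implications in \ref{lem:20160129ai} in turn, then to deduce \ref{lem:20160129aiii} by inserting \ref{lem:20160129a_c} into the estimate of Lemma~\ref{lem:FBS_step}\ref{lem:FBS_stepiii}, and finally to read off \ref{lem:20160129aiv} as the instance $x=x_k$ of \eqref{eq:20160202d}. Throughout I first observe that, since $x_k,y_k\in\dom g$ and $\dom g$ is convex, the point $x_{k+1}=x_k+\lambda_k(y_k-x_k)$ lies in $[x_k,y_k]\subset\dom g$, so $\nabla f(x_{k+1})$ is well defined. For \ref{lem:20160129a_a} $\Rightarrow$ \ref{lem:20160129a_b} I would not integrate: since $f$ is convex and G\^ateaux differentiable at $x_{k+1}$, one has $\nabla f(x_{k+1})\in\partial f(x_{k+1})$, whence $f(x_k)\ge f(x_{k+1})+\scal{x_k-x_{k+1}}{\nabla f(x_{k+1})}$, i.e.\ $f(x_{k+1})-f(x_k)\le\scal{x_{k+1}-x_k}{\nabla f(x_{k+1})}$; subtracting $\scal{x_{k+1}-x_k}{\nabla f(x_k)}$ and using Cauchy--Schwarz together with \ref{lem:20160129a_a} gives $f(x_{k+1})-f(x_k)-\scal{x_{k+1}-x_k}{\nabla f(x_k)}\le\scal{x_{k+1}-x_k}{\nabla f(x_{k+1})-\nabla f(x_k)}\le(\delta/(\gamma_k\lambda_k))\norm{x_{k+1}-x_k}^2$, which is \ref{lem:20160129a_b}.

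For \ref{lem:20160129a_b} $\Rightarrow$ \ref{lem:20160129a_c} I would use $x_{k+1}-x_k=\lambda_k(y_k-x_k)$, hence $\norm{x_{k+1}-x_k}^2=\lambda_k^2\norm{y_k-x_k}^2$, to rewrite \ref{lem:20160129a_b} as $f(x_{k+1})-f(x_k)\le\lambda_k\scal{y_k-x_k}{\nabla f(x_k)}+(\delta\lambda_k/\gamma_k)\norm{y_k-x_k}^2$, while convexity of $g$ gives $g(x_{k+1})-g(x_k)\le\lambda_k\big(g(y_k)-g(x_k)\big)$. Adding these and then bounding the quadratic term by means of Lemma~\ref{lem:FBS_step}\ref{lem:FBS_stepi}, written as $\norm{y_k-x_k}^2/\gamma_k\le-\big(g(y_k)-g(x_k)+\scal{y_k-x_k}{\nabla f(x_k)}\big)$, converts the coefficient $1$ in front of $\lambda_k\big(g(y_k)-g(x_k)+\scal{y_k-x_k}{\nabla f(x_k)}\big)$ into $1-\delta$, which is exactly \ref{lem:20160129a_c}.

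For \ref{lem:20160129aiii}: Lemma~\ref{lem:FBS_step}\ref{lem:FBS_stepi} also shows that the quantity $g(y_k)-g(x_k)+\scal{y_k-x_k}{\nabla f(x_k)}$ is $\le0$; since $1-\delta>0$, \ref{lem:20160129a_c} is equivalent to $-2\gamma_k\lambda_k\big(g(y_k)-g(x_k)+\scal{y_k-x_k}{\nabla f(x_k)}\big)\le\frac{2\gamma_k}{1-\delta}\big((f+g)(x_k)-(f+g)(x_{k+1})\big)$, and substituting this into Lemma~\ref{lem:FBS_step}\ref{lem:FBS_stepiii} yields \eqref{eq:20160202d}. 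For \ref{lem:20160129aiv} I would put $x=x_k$ in \eqref{eq:20160202d}: the term $2\gamma_k\lambda_k\big((f+g)(x_k)-(f+g)(x_k)\big)$ vanishes, leaving $2\norm{x_{k+1}-x_k}^2\le\frac{2\gamma_k}{1-\delta}\big((f+g)(x_k)-(f+g)(x_{k+1})\big)$, which is the claimed inequality; as its left-hand side is $\ge0$ and $\gamma_k>0$, this also forces $(f+g)(x_{k+1})\le(f+g)(x_k)$.

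The argument is essentially mechanical once Lemma~\ref{lem:FBS_step} is available, so there is no serious obstacle; the one point that needs attention is sign bookkeeping -- consistently exploiting the nonpositivity of $g(y_k)-g(x_k)+\scal{y_k-x_k}{\nabla f(x_k)}$ (from Lemma~\ref{lem:FBS_step}\ref{lem:FBS_stepi}) when dividing by $1-\delta$ and reversing inequalities -- which is exactly what makes the directions of the estimates in \ref{lem:20160129aiii}--\ref{lem:20160129aiv} come out correctly.
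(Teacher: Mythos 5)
Your proposal is correct and follows essentially the same route as the paper: convexity of $f$ plus Cauchy--Schwarz for \ref{lem:20160129a_a} $\Rightarrow$ \ref{lem:20160129a_b}, convexity of $g$ and Lemma~\ref{lem:FBS_step}\ref{lem:FBS_stepi} for \ref{lem:20160129a_b} $\Rightarrow$ \ref{lem:20160129a_c}, then substitution of \ref{lem:20160129a_c} into Lemma~\ref{lem:FBS_step}\ref{lem:FBS_stepiii} for \eqref{eq:20160202d}, and $x=x_k$ for \ref{lem:20160129aiv}. The only cosmetic difference is that the paper obtains \ref{lem:20160129a_c} by splitting the bound into a $(1-\delta)$ part and a $\delta$ part before invoking Lemma~\ref{lem:FBS_step}\ref{lem:FBS_stepi}, which is the same computation as yours.
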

\begin{proof}
\ref{lem:20160129ai}:
Suppose that \ref{lem:20160129a_a} holds. Then, using the convexity of $f$, we derive that
\begin{equation*}
\begin{aligned}
f(x_k) &
%\geq f(x_{k+1}) + \scal{x_k - x_{k+1}}{\nabla f(x_{k+1})}\\
\geq f(x_{k+1}) + \scal{x_k - x_{k+1}}{\nabla f(x_{k+1}) - \nabla f(x_k)}
+ \scal{x_k - x_{k+1}}{\nabla f(x_k)}\\
&\geq f(x_{k+1}) - \frac{\delta}{\gamma_k\lambda_k}\norm{x_k - x_{k+1}}^2
+ \scal{x_k - x_{k+1}}{\nabla f(x_k)}.
\end{aligned}
\end{equation*}
hence \ref{lem:20160129a_b} holds. Now, suppose that \ref{lem:20160129a_b} holds.
Then, since $g(x_{k+1}) - g(x_k) \leq \lambda_k (g(y_k) - g(x_k))$, we have
\begin{align}
\label{eq:20160409a}
\nonumber(f+g)(x_{k+1}) - (f+g)(x_k)
&\leq g(x_{k+1}) - g(x_k)
+ \scal{x_{k+1} -x_k}{\nabla f(x_k)}
 + \frac{\delta}{ \gamma_k \lambda_k} \norm{x_{k+1} - x_k}^2\\
 & \leq \lambda_k \Big(g(y_{k}) - g(x_k)
+ \scal{y_{k} -x_k}{\nabla f(x_k)}
 + \frac{\delta}{ \gamma_k} \norm{y_{k} - x_k}^2\Big)\\
\nonumber & = (1-\delta)\lambda_k\big(g(y_{k}) - g(x_k)
+ \scal{y_{k} -x_k}{\nabla f(x_k)} \big)\\
\nonumber &\quad + \delta \lambda_k \bigg(g(y_{k}) - g(x_k)
+ \scal{y_{k} -x_k}{\nabla f(x_k)} 
 + \frac{1}{ \gamma_k} \norm{y_{k} - x_k}^2\bigg).
\end{align}
Therefore, recalling Lemma~\ref{lem:FBS_step}\ref{lem:FBS_stepi}, 
\ref{lem:20160129a_c} follows. 

\ref{lem:20160129aiii}:
It follows from   \ref{lem:20160129a_c} that
\begin{equation*}
- \lambda_k \big(g(y_{k}) - g(x_k) + \scal{y_{k} -x_k}{\nabla f(x_k)} \big)
\leq \frac{(f+g)(x_{k}) - (f+g)(x_{k+1})}{1 - \delta}
\end{equation*}
and hence, multiplying by $2 \gamma_k$,
the statement follows from Lemma~\ref{lem:FBS_step}\ref{lem:FBS_stepiii}.

\ref{lem:20160129aiv}:
It follows from \eqref{eq:20160202d},
by taking $x = x_k$.
\qquad\end{proof}

%\smallskip
%\begin{remark}
%\normalfont
%If \ref{lem:20160129a_b} in Proposition~\ref{lem:20160129a} holds, then one can prove, more directly, 
%that for every $x \in \dom g$,
%\begin{multline}
%\label{eq:20160202f}
%\norm{x_{k+1} - x}^2 \leq \norm{x_k - x}^2 
%+ 2 \gamma_k \lambda_k \big( (f+g)(x) - (f+g)(x_{k}) \big)\\
%+ 2 \gamma_k \big( (f+g)(x_k) - (f+g)(x_{k+1})\big)
%+ (2\delta - 1) \norm{x_{k+1} - x_k}^2.
%\end{multline}
%Indeed, by adding to both sides of  \ref{lem:20160129a_b},
%$g(x_{k}) - g(x_{k+1})$ and taking into account that
%$g(x_{k+1}) - g(x_k) \leq \lambda_k(g(y_k) - g(x_k))$, we have
%\begin{align}
%\label{eq:20160201a}
%\nonumber - \lambda_k\big( g(y_{k}) - g(x_k) +
%&\scal{y_{k}- x_k}{\nabla f(x_k)} \big)\\
%\nonumber& \leq -\big( g(x_{k+1}) - g(x_k) +
%\scal{x_{k+1}- x_k}{\nabla f(x_k)} \big)\\
%&\leq (f+g)(x_{k} ) - (f+g)(x_{k+1}) 
% + \frac{\delta}{\gamma_k \lambda_k} \norm{x_{k+1} - x_k}^2.
%\end{align}
%Hence, \eqref{eq:20160202f} follows from \eqref{eq:20160202e},
%\eqref{eq:20160201a}, and the fact that 
%$(\delta - 1)/\lambda_k \leq (\delta- 1)$.
%\end{remark}

\begin{remark}
\normalfont
\label{eq:rmk20160409}
Another condition that is in between \ref{lem:20160129a_b} and
 \ref{lem:20160129a_c} of Proposition~\ref{lem:20160129a}  is the following
\begin{equation}
\label{eq:20160409b}
(f+g)(x_{k+1}) - (f+g)(x_k) \leq \sigma \lambda_k \Big( g(y_k) - g(x_k) 
+ \scal{y_k - x_k}{\nabla f(x_k)} + \frac{\beta}{\gamma_k} \norm{y_k - x_k}^2\Big),
\end{equation}
where $(\sigma,\beta) \in [0,1]^2$, $0 <  (1-\beta)\sigma<1$, and $1- \delta = (1-\beta)\sigma$. 
More precisely, 
since
\begin{align*}
g(y_k) - g(x_k) &+ \scal{y_k - x_k}{\nabla f(x_k)} 
+ \frac{1 - (1-\beta)\sigma}{\gamma_k} \norm{y_k - x_k}^2 \\
&= \sigma \Big( g(y_k) - g(x_k) + \scal{y_k - x_k}{\nabla f(x_k)}  
+ \frac{\beta}{\gamma_k} \norm{y_k - x_k}^2\Big)\\
&\qquad+ (1-\sigma)\Big( g(y_k) - g(x_k) + \scal{y_k - x_k}{\nabla f(x_k)}  
+ \frac{1}{\gamma_k} \norm{y_k - x_k}^2\Big)\\
&\leq \sigma \Big( g(y_k) - g(x_k) + \scal{y_k - x_k}{\nabla f(x_k)}  
+ \frac{\beta}{\gamma_k} \norm{y_k - x_k}^2\Big),
\end{align*}
if \ref{lem:20160129a_b} holds with $\delta = 1 - (1-\beta)\sigma$, then, 
it follows from \eqref{eq:20160409a} that \eqref{eq:20160409b} holds.
Moreover, it follows from  \eqref{eq:20160409a} and 
Lemma~\ref{lem:FBS_step}\ref{lem:FBS_stepi} that 
\eqref{eq:20160409b} $\Rightarrow$ \ref{lem:20160129a_c}
with $\delta = 1 - (1-\beta)\sigma$.
Note that  \eqref{eq:20160409b} includes \ref{lem:20160129a_c}
by choosing $\beta=0$ and $\sigma= \delta$.
Condition \eqref{eq:20160409b} 
(with $0<\sigma<1$ and $0 \leq \beta <1$) 
is at the basis of the Armijio line search proposed by Tseng and Yun in \cite{Tseng09},
which is also adopted in \cite{Bon15a} with $\beta\leq 1/2$. 
Finally, the second line search method in \cite{Cruz15}
 corresponds to \eqref{eq:20160409b} with $\sigma=1$, $\beta=\delta=1/2$, and $\lambda_k\equiv 1$, and hence it leads to values of $\lambda_k$ smaller than necessary:
by choosing $\delta$ close to $1$, a larger step 
along $y_k - x_k$ is obtained.
In view of Proposition~\ref{lem:20160129a}, 
Proposition~\ref{prop:20151220a}, Corollary~\ref{cor:20160202a} below, 
and the discussion above
we can claim that our convergence results hold also for the Armjio-type rule 
considered in \cite{Bon15a,Tseng09}.
\end{remark}

The following result treats the four line search methods in a unifying manner.

\begin{corollary}
\label{cor:20160202a}
Assume that {\bf H1} and {\bf H3} hold
and that $f$ is G\^ateaux differentiable on $\dom g$.
Let ${(x_k)}_{k \in \N}$ and ${(y_k)}_{k \in \N}$ be defined according to {\rm (VM-FBS)}
for some sequences $(\gamma_k)_{k \in \N}$ and $(\lambda_k)_{k \in \N}$.
 Suppose that
one of  
\ref{lem:20160129a_a}, \ref{lem:20160129a_b},
or \ref{lem:20160129a_c} in Proposition~\ref{lem:20160129a} is satisfied
in the metric $\scal{\cdot}{\cdot}_k$, for every $k \in \N$.
Then, the following hold.
\begin{enumerate}[{\rm(i)}]
\item\label{cor:20160202ai}
The sequence $\big{((f+g)(x_k)\big)}_{k \in \N}$ 
is decreasing.
\item \label{cor:20160202aii}
If $\sup_{k \in \N} \gamma_k<+\infty$ and 
$\inf_{k \in \N} (f+g)(x_k)>-\infty$, then
\begin{equation*}
\frac{1}{\sup_{k \in \N} \gamma_k}
\sum_{k \in \N} \norm{x_{k+1} - x_k}_k^2 \leq \sum_{k \in \N} 
\frac{\norm{x_{k+1} - x_{k}}_k^2}{\gamma_k} < +\infty.
\end{equation*}
\item\label{cor:20160202aiii}
If {\bf H4} holds,
then, for every $x \in \dom g$ and for every $k \in \N$,
\begin{multline*}
\norm{x_{k+1} - x}_{k+1}^2 \leq (1 + \eta_k) \norm{x_k - x}_k^2
+ 2 \gamma_k \lambda_k(1 + \eta_k) \big( (f+g)(x) - (f+g)(x_{k})\big)\\[0.5ex]
+ \frac{2 \gamma_k (1 + \eta_k)}{1 - \delta}\big( (f+g)(x_{k}) - (f+g)(x_{k+1}) \big).
\end{multline*}
\item\label{cor:20160202aiv}
If {\bf H5} holds,
then,
for every $x \in \dom g$ and for every $k \in \N$,
\begin{multline*}
\norm{x_{k+1} - x}^2 \leq\bigg(1+ \frac{\mu_k - \nu_k}{\nu}\bigg) \norm{x_k - x}^2
+ 2 \frac{\gamma_k \lambda_k}{\nu_k} \big( (f+g)(x) - (f+g)(x_{k})\big)\\[0.5ex]
+ \frac{2 \gamma_k }{\nu(1 - \delta)}\big( (f+g)(x_{k}) - (f+g)(x_{k+1}) \big).
\end{multline*}
\end{enumerate}
\end{corollary}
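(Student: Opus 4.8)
The plan is to deduce the whole statement from Proposition~\ref{lem:20160129a}, applied for each index $k$ in the metric $\scal{\cdot}{\cdot}_k$ rather than in the ambient metric $\scal{\cdot}{\cdot}$. First I would check that this is legitimate: under {\bf H3} the norm $\norm{\cdot}_k$ is an equivalent Hilbert norm on $\hh$, the function $f$ stays convex and G\^ateaux differentiable on $\dom g$ (with gradient $\nabla^k f$), the iterates of (VM-FBS) are exactly the $J_k$-step $y_k = \prox^k_{\gamma_k g}(x_k - \gamma_k\nabla^k f(x_k))$, $x_{k+1} = x_k + \lambda_k(y_k-x_k)$, and, crucially, \eqref{eq:20160204b} gives $\scal{d}{\nabla^k f(x)}_k = \scal{d}{\nabla f(x)}$ for every $d\in\hh$, so that conditions \ref{lem:20160129a_b} and \ref{lem:20160129a_c} of Proposition~\ref{lem:20160129a} written in the metric $\scal{\cdot}{\cdot}_k$ are precisely the inequalities assumed in the corollary (and \ref{lem:20160129a_a} in that metric is the gradient inequality of type LS3). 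By the chain of implications in Proposition~\ref{lem:20160129a}\ref{lem:20160129ai}, in all three cases \ref{lem:20160129a_c} holds in the metric $\scal{\cdot}{\cdot}_k$ for all $k$, so the remaining three arguments all start from there.

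For (i) and (ii) I would invoke Proposition~\ref{lem:20160129a}\ref{lem:20160129aiv} in the metric $\scal{\cdot}{\cdot}_k$, which gives $(1-\delta)\norm{x_{k+1}-x_k}_k^2 \leq \gamma_k\big((f+g)(x_k) - (f+g)(x_{k+1})\big)$ and, in particular, $(f+g)(x_{k+1}) \leq (f+g)(x_k)$ --- this is (i). For (ii), dividing by $\gamma_k$ and summing, the right-hand side telescopes and is bounded by $\big((f+g)(x_0) - \lim_k(f+g)(x_k)\big)/(1-\delta) < +\infty$ (the limit exists and is finite because the sequence is decreasing and bounded below by hypothesis); the left inequality in (ii) is just the termwise estimate $1/\gamma_k \geq 1/\sup_{k}\gamma_k$.

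For (iii) I would take Proposition~\ref{lem:20160129a}\ref{lem:20160129aiii} in the metric $\scal{\cdot}{\cdot}_k$, i.e.\ \eqref{eq:20160202d} with every $\norm{\cdot}$ replaced by $\norm{\cdot}_k$, multiply it by $1+\eta_k\geq 1$, use {\bf H4} in the form $\norm{x_{k+1}-x}_{k+1}^2 \leq (1+\eta_k)\norm{x_{k+1}-x}_k^2$ on the left, and discard the nonpositive term $-(1+\eta_k)\norm{x_{k+1}-x_k}_k^2$; what remains is exactly the asserted inequality. For (iv) I would again start from \eqref{eq:20160202d} in the metric $\scal{\cdot}{\cdot}_k$, bound the left side from below by $\nu_k\norm{x_{k+1}-x}^2$ and the first right-hand term from above by $\mu_k\norm{x_k-x}^2$ via \eqref{eq:norms}, drop $-\norm{x_{k+1}-x_k}_k^2$, and divide through by $\nu_k$; the coefficients produced are $\mu_k/\nu_k$, $2\gamma_k\lambda_k/\nu_k$, and $2\gamma_k/(\nu_k(1-\delta))$, and one finishes with $\mu_k/\nu_k = 1 + (\mu_k-\nu_k)/\nu_k \leq 1 + (\mu_k-\nu_k)/\nu$ and --- using $(f+g)(x_k) - (f+g)(x_{k+1}) \geq 0$ from (i) together with $1/\nu_k \leq 1/\nu$ --- by enlarging the last coefficient to $2\gamma_k/(\nu(1-\delta))$.

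The hard part is essentially only bookkeeping: being careful that Proposition~\ref{lem:20160129a} transfers verbatim to each metric $\scal{\cdot}{\cdot}_k$ (this rests on \eqref{eq:20160204b} and on $\nabla f(x) = W_k\nabla^k f(x)$ for the gradient terms), and that every quantity which is rescaled by $1+\eta_k$ or by $1/\nu_k$ --- or simply discarded --- carries the sign needed to keep the inequality pointing the right way, in particular the function-value gap $(f+g)(x_k)-(f+g)(x_{k+1})$, whose nonnegativity is exactly part (i) proved above.
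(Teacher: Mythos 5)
Your proposal is correct and follows essentially the same route as the paper: (i)--(ii) from Proposition~\ref{lem:20160129a}\ref{lem:20160129aiv} applied in each metric $\scal{\cdot}{\cdot}_k$ with telescoping, and (iii)--(iv) from \eqref{eq:20160202d} in the metric $\scal{\cdot}{\cdot}_k$ combined with {\bf H4}, respectively \eqref{eq:norms} and {\bf H5}, discarding the nonpositive term $-\norm{x_{k+1}-x_k}_k^2$. The only (harmless) difference is that the paper first weakens the middle term via $-(f+g)(x_k)\leq -(f+g)(x_{k+1})$ (its \eqref{eq:20160204e}, the form actually needed later for condition \ref{prop:genconv_b} of Proposition~\ref{prop:genconv}), whereas you work directly with \eqref{eq:20160202d} and obtain the inequalities exactly as stated.
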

\begin{proof}
\ref{cor:20160202ai}:
Invoking Proposition~\ref{lem:20160129a}\ref{lem:20160129aiv}
for each metric $\scal{\cdot}{\cdot}_k$, we derive that,
for every $k \in \N$, $(f+g)(x_{k+1}) \leq (f+g)(x_k)$.

\ref{cor:20160202aii}:
Since 
 $\bar{\gamma} := \sup_{k \in \N} \gamma_k<+\infty$, we derive from 
%\eqref{eq:norms} and
Proposition~\ref{lem:20160129a}\ref{lem:20160129aiv}, applied 
with respect to  each metric
$\scal{\cdot}{\cdot}_k$, that, for every $k \in \N$
\begin{align*}
%\nu\frac{1-\delta}{\bar{\gamma}}\sum_{i=0}^k \norm{x_{i+1} - x_{i}}^2 
\frac{1-\delta}{\bar{\gamma}}\sum_{i=0}^k \norm{x_{i+1} - x_{i}}_i^2
&\leq (1-\delta)\sum_{i=0}^k \frac{\norm{x_{i+1} - x_{i}}_i^2}{\gamma_i }
\leq \sum_{i=0}^k \big( (f+g)(x_{i}) - (f+g)(x_{i+1}) \big) \\
%&= (f+g)(x_0) - (f+g)(x_{k+1})\\[1ex]
&\leq (f+g)(x_0) - \inf_{k \in \N} (f+g)(x_k)<+\infty.
\end{align*}

\ref{cor:20160202aiii}:
Let $k \in \N$ and let $x \in \dom g$.
It is enough to note that, since $- (f+g)(x_k) \leq - (f+g)(x_{k+1})$, Proposition~\ref{lem:20160129a}\ref{lem:20160129aiii}, applied with respect to the metric $\scal{\cdot}{\cdot}_k$,  yields
\begin{equation}
\label{eq:20160204e}
\begin{aligned}
\norm{x_{k+1} - x}_k^2 \leq \norm{x_k - x}_k^2
&+ 2 \gamma_k \lambda_k \big( (f+g)(x) - (f+g)(x_{k+1})\big)\\[0.5ex]
&+ \frac{2 \gamma_k}{1 - \delta}\big( (f+g)(x_{k}) - (f+g)(x_{k+1}) \big).
\end{aligned}
\end{equation}
Hence, multiplying by $(1+\eta_k)$ and taking into account that
$\norm{\cdot}_{k+1}^2 \leq (1 + \eta_k) \norm{\cdot}^2_k$ the statement follows.

\ref{cor:20160202aiv}:
It follows from \eqref{eq:20160204e} and \eqref{eq:norms} that
\begin{equation}
\label{eq:20160204l}
\begin{aligned}
\nu_k \norm{x_{k+1} - x}^2 \leq \mu_k \norm{x_k - x}^2
&+ 2 \gamma_k \lambda_k \big( (f+g)(x) - (f+g)(x_{k+1})\big)\\[0.5ex]
&+ \frac{2 \gamma_k}{1 - \delta}\big( (f+g)(x_{k}) - (f+g)(x_{k+1}) \big).
\end{aligned}
\end{equation}
Hence, dividing \eqref{eq:20160204l} by $\nu_k$, and noting
that $(f+g)(x_{k}) - (f+g)(x_{k+1})\geq 0$, that $\nu \leq \nu_k$, and $\mu_k/\nu_k = 1 + (\mu_k - \nu_k)/\nu_k \leq 1 + (\mu_k - \nu_k)/\nu$,
the statement follows.
\qquad\end{proof}

\begin{remark}
\normalfont
\label{rmk:20160202a}
Under the hypotheses of Corollary~\ref{cor:20160202a}, 
if $(\gamma_k)_{k \in \N}$ is bounded and  $\inf_{k \in \N} (f+g)(x_k)>-\infty$,
then the following hold:
\begin{enumerate}[(i)]
\item 
If {\bf H4} is satisfied, then setting $\bar{\gamma} = \sup_{k \in \N} \gamma_k$ and $\bar{\eta} = \sup_{k \in \N} \eta_k$, condition 
  \ref{prop:genconv_b} 
in Proposition~\ref{prop:genconv}
is fulfilled with $h = f+g$,
$(\alpha_k)_{k \in \N} = (\gamma_k \lambda_k (1+ \eta_k))_{k \in \N}$, 
${(\abs{\cdot}_k)}_{k \in \N} = {(\norm{\cdot}_k)}_{k \in \N}$, and
\begin{equation*}
(\varepsilon_k)_{k \in \N} = 2 \bar{\gamma}(1 + \bar{\eta})/(1-\delta) 
{ \big( (f+g)(x_k) - (f+g)(x_{k+1}) \big)}_{k \in \N}.
\end{equation*}
\item\label{rmk:20160202aii}
If {\bf H5} satisfied, then setting $\bar{\gamma} = \sup_{k \in \N} \gamma_k$, condition 
  \ref{prop:genconv_b} 
in Proposition~\ref{prop:genconv}
is fulfilled with $h = f+g$,
$(\alpha_k)_{k \in \N} = (\gamma_k \lambda_k/\nu_k)_{k \in \N}$, 
${(\eta_k)}_{k \in \N} = \nu^{-1}{(\mu_k - \nu_k)}_{k \in \N}$, 
${(\abs{\cdot}_k)}_{k \in \N} \equiv \norm{\cdot}$, and
\begin{equation*}
(\varepsilon_k)_{k \in \N} = 2 \bar{\gamma}/(\nu(1-\delta))
{ \big( (f+g)(x_k) - (f+g)(x_{k+1}) \big)}_{k \in \N}.
\end{equation*}
\end{enumerate}
\end{remark}

To finish our convergence analysis it remains to
verify \ref{prop:genconv_c} in Proposition~\ref{prop:genconv}.

\begin{proposition}
\label{prop:20151220a}
Assume that {\bf H1}--{\bf H3} hold
and define ${(x_k)}_{k \in \N}$ and ${(y_k)}_{k \in \N}$ as in  {\rm (VM-FBS)}
with some $(\gamma_k)_{k \in \N}$ and $(\lambda_k)_{k \in \N}$.
Let $\delta>0$ and
consider the following properties:

\begin{enumerate}[$(a^\prime)$]
\item
\label{prop:20151220a_b} 
$\displaystyle (\exists\, \theta \in \left]0,1\right[)(\exists\, \sigma \in \RPP) (\forall\, k \in \N)\quad  
\gamma_k <\sigma\ \Rightarrow\\[1ex]
\phantom{aa}
\big\lVert \nabla^k f\big(J_k(x_k,\gamma_k/\theta, \lambda_k)\big) - \nabla^k f(x_k) \big\rVert_k
> \frac{\delta \theta}{\gamma_k \lambda_k} 
\norm{J_k(x_k,\gamma_k/\theta,\lambda_k) - x_k}_k$.\\[-0.5ex]
\item
\label{prop:20151220b_b} 
$\displaystyle (\exists\, \theta \in \left]0,1\right[)(\exists\, \sigma \in \RPP) (\forall\, k \in \N)\quad  
\gamma_k <\sigma\ \Rightarrow\\[1ex]
f\big(J_k(x_k, \gamma_k/\theta,\lambda_k)\big) - f(x_k) 
- \big\langle J_k(x_k, \gamma_k/\theta,\lambda_k) -x_k 
\,\big\vert\, \nabla f(x_k)\big\rangle
\!>\! \frac{\delta \theta}{\gamma_k \lambda_k} 
\norm{J_k(x_k,\gamma_k/\theta,\lambda_k) - x_k}_k^2$.
%\end{multline}
\end{enumerate}
\begin{enumerate}[$(b^{\prime\prime})$]
\item
\label{prop:20151220b_b2}
$\displaystyle (\exists\, \theta \in \left]0,1\right[)(\exists\, \sigma \in \left]0,1\right])(\forall\, k \in \N)
\ \lambda_k\leq \sigma\theta\ \Rightarrow\\[1ex] 
f\big(J_k(x_k,\gamma_k,\lambda_k/\theta)\big) - f(x_k)
- \big\langle J_k (x_k, \gamma_k, \lambda_k/\theta) -x_k 
\,\big\vert\, \nabla f(x_k)\big\rangle
> \frac{\delta \theta}{\gamma_k \lambda_k} 
\norm{J_k(x_k,\gamma_k,\lambda_k/\theta) - x_k}_k^2$.
\end{enumerate}
\begin{enumerate}[$(c^{\prime})$]
\item 
\label{prop:20151220b_c}
$\displaystyle (\exists\, \theta \in \left]0,1\right[)(\exists\, \sigma \in \left]0,1\right])
(\forall\, k \in \N)
\ \gamma_k<\sigma \Rightarrow \\[1.5ex]
(f+g)\big(J_k (x_k, \gamma_k/\theta, \lambda_k)\big) - (f+g)(x_k) \\[1.5ex] 
\phantom{xxxxxxxxxxxxxxx}> (1-\delta) \lambda_k\big( g\big(J_k(x_k, \gamma_k/\theta, 1)\big) - g(x_k)
+ \big\langle J_k(x_k, \gamma_k/\theta, 1) -x_k 
\,\big\vert\, \nabla f(x_k)\big\rangle \big)$.
\end{enumerate}
\begin{enumerate}[$(c^{\prime\prime})$]
\item 
\label{prop:20151220b_c2}
$\displaystyle (\exists\, \theta \in \left]0,1\right[)(\exists\, \sigma \in \left]0,1\right])(\forall\, k \in \N)
\ \lambda_k\leq \sigma\theta\ \Rightarrow\\[1.5ex]
(f+g)\big(J_k(x_k,\gamma_k, \lambda_k/\theta)\big) - (f+g)(x_k) 
> (1-\delta)(\lambda_k/\delta)
\big( g(y_k) - g(x_k)
+ \big\langle y_k -x_k 
\,\big\vert\, \nabla f(x_k)\big\rangle \big)$.
\end{enumerate}
Then the following hold.
\begin{enumerate}[{\rm(i)}]
\item\label{prop:20151220aibis} 
\ref{prop:20151220b_c} $\Rightarrow$ \ref{prop:20151220b_b}
 $\Rightarrow$ \ref{prop:20151220a_b}
and \ref{prop:20151220b_c2} $\Rightarrow$ \ref{prop:20151220b_b2}.
\item\label{prop:20151220aii}
Suppose that $\big((f+g)(x_k)\big)_{k \in \N}$ is decreasing, $\norm{x_{k+1} - x_k}_k \to 0$,  $\sup_{k \in \N} \gamma_k<+\infty$, and that
either  $\inf_{k \in \N} \lambda_k>0$ and 
\ref{prop:20151220a_b} hold, or that
$\inf_{k \in \N} \gamma_k>0$ and \ref{prop:20151220b_b2}  hold.
Then, there exists  
$(v_k)_{k\in \N}\in \hh^{\N}$ such that,  $\forall\, k \in \N$, $v_k \in \partial (f+g)(y_k)$
and for every weakly convergent subsequence $(x_{n_k})_{k \in \N}$
of $(x_k)_{k \in \N}$, $y_{k_n} - x_{k_n}\to 0$ and $v_{n_k}\to 0$.
\end{enumerate}
\end{proposition}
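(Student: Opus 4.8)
\emph{Part (i).} Each of the implications in (i) is the contrapositive of one of the implications of Proposition~\ref{lem:20160129a}\ref{lem:20160129ai}, read in the metric $\scal{\cdot}{\cdot}_k$ for a virtual forward--backward step. For \ref{prop:20151220b_c}$\Rightarrow$\ref{prop:20151220b_b}$\Rightarrow$\ref{prop:20151220a_b}, fix $k$ with $\gamma_k<\sigma$ and apply Proposition~\ref{lem:20160129a} with $\gamma_k$ replaced by $\gamma_k/\theta$ and $\lambda_k$ unchanged: then the role of $y_k$ is played by $J_k(x_k,\gamma_k/\theta,1)=\prox^k_{(\gamma_k/\theta)g}(x_k-(\gamma_k/\theta)\nabla^k f(x_k))$, that of $x_{k+1}$ by $J_k(x_k,\gamma_k/\theta,\lambda_k)$, and, since $\delta/((\gamma_k/\theta)\lambda_k)=\delta\theta/(\gamma_k\lambda_k)$, the statements \ref{lem:20160129a_a}, \ref{lem:20160129a_b}, \ref{lem:20160129a_c} of that proposition become exactly the negations of the inner inequalities of \ref{prop:20151220a_b}, \ref{prop:20151220b_b}, \ref{prop:20151220b_c} at that index; as \ref{lem:20160129a_a}$\Rightarrow$\ref{lem:20160129a_b}$\Rightarrow$\ref{lem:20160129a_c} there, this gives the claim. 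The implication \ref{prop:20151220b_c2}$\Rightarrow$\ref{prop:20151220b_b2} is obtained identically, now replacing $\lambda_k$ by $\lambda_k/\theta$ (so that $y_k$ is unchanged and $x_{k+1}$ becomes $J_k(x_k,\gamma_k,\lambda_k/\theta)$).

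\emph{Part (ii), construction and reduction.} Fix $k$. Since $y_k=\prox^k_{\gamma_k g}(x_k-\gamma_k\nabla^k f(x_k))\in\dom g$ and $f$ is G\^ateaux differentiable on $\dom g$, the metric version of \eqref{eq:20160114a} gives $\gamma_k^{-1}(x_k-y_k)-\nabla^k f(x_k)\in\partial^k g(y_k)$, hence $\gamma_k^{-1}(x_k-y_k)-\nabla^k f(x_k)+\nabla^k f(y_k)\in\partial^k(f+g)(y_k)$; applying $W_k$ and using \eqref{eq:metric_subdiff} together with $\nabla f=W_k\nabla^k f$, the vector
\begin{equation*}
v_k:=\frac{1}{\gamma_k}\,W_k(x_k-y_k)+\nabla f(y_k)-\nabla f(x_k)
\end{equation*}
belongs to $\partial(f+g)(y_k)$, and by \eqref{eq:norms}, $\norm{v_k}\le(\mu/\gamma_k)\norm{y_k-x_k}+\norm{\nabla f(y_k)-\nabla f(x_k)}$. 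Let $(x_{n_k})_{k\in\N}$ be any weakly convergent subsequence, $x_{n_k}\rightharpoonup\bar x$. Since $\big((f+g)(x_k)\big)_{k\in\N}$ is decreasing, $(f+g)(x_{n_k})\le(f+g)(x_0)<+\infty$, so by weak lower semicontinuity of $f+g$ we get $(f+g)(\bar x)<+\infty$, i.e.\ $\bar x\in\dom(f+g)=\dom g$. It therefore suffices to prove that $\norm{y_{n_k}-x_{n_k}}\to0$ and $\norm{y_{n_k}-x_{n_k}}/\gamma_{n_k}\to0$: then $\norm{\nabla f(y_{n_k})-\nabla f(x_{n_k})}\to0$ follows from Lemma~\ref{l:20160103a}\ref{l:20160103ai}\ref{l:20160103aia} applied to $f$ on $\dom g$ (with $\bar x\in\dom g$, $x_{n_k}\rightharpoonup\bar x$, $y_{n_k}-x_{n_k}\to0$), whence $v_{n_k}\to0$ by the displayed bound and $y_{n_k}\rightharpoonup\bar x$, which is property \ref{prop:genconv_c}.

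\emph{Part (ii), case $\inf_k\lambda_k>0$ and \ref{prop:20151220a_b}.} From $x_{k+1}-x_k=\lambda_k(y_k-x_k)$, $\norm{x_{k+1}-x_k}_k\to0$ and $\inf_k\lambda_k>0$ we get $\norm{y_k-x_k}\to0$ for the whole sequence. If $\norm{y_{n_k}-x_{n_k}}/\gamma_{n_k}\not\to0$, pass to a subsequence where it stays $\ge c>0$; this forces $\gamma_{n_k}\to0$, hence eventually $\gamma_{n_k}<\sigma$ and \ref{prop:20151220a_b} applies with $\tilde z_k:=J_{n_k}(x_{n_k},\gamma_{n_k}/\theta,\lambda_{n_k})$. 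By Lemma~\ref{lem:Jproperties}\ref{lem:Jpropertiesi} in the metric $\scal{\cdot}{\cdot}_{n_k}$ (with $\gamma_{n_k}\le\gamma_{n_k}/\theta$), $\norm{x_{n_k+1}-x_{n_k}}_{n_k}\le\norm{\tilde z_k-x_{n_k}}_{n_k}\le\theta^{-1}\norm{x_{n_k+1}-x_{n_k}}_{n_k}$, so $\tilde z_k-x_{n_k}\to0$, $\tilde z_k\in\dom g$ and $\tilde z_k\rightharpoonup\bar x$; then Lemma~\ref{l:20160103a}\ref{l:20160103ai}\ref{l:20160103aia} and Fact~\ref{p:20160125f} give $\norm{\nabla^{n_k}f(\tilde z_k)-\nabla^{n_k}f(x_{n_k})}_{n_k}\le\nu^{-1/2}\norm{\nabla f(\tilde z_k)-\nabla f(x_{n_k})}\to0$, i.e.\ the left-hand side of \ref{prop:20151220a_b} tends to $0$. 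Since $\norm{\tilde z_k-x_{n_k}}_{n_k}\ge\norm{x_{n_k+1}-x_{n_k}}_{n_k}=\lambda_{n_k}\norm{y_{n_k}-x_{n_k}}_{n_k}$, the right-hand side of \ref{prop:20151220a_b} then forces $\gamma_{n_k}^{-1}\norm{y_{n_k}-x_{n_k}}_{n_k}\to0$, hence by \eqref{eq:norms} $\gamma_{n_k}^{-1}\norm{y_{n_k}-x_{n_k}}\to0$, a contradiction. Thus $\norm{y_{n_k}-x_{n_k}}/\gamma_{n_k}\to0$ and the set-up applies.

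\emph{Part (ii), case $\inf_k\gamma_k>0$ and \ref{prop:20151220b_b2}.} Here $1/\gamma_{n_k}$ is bounded, so it suffices to show $\norm{y_{n_k}-x_{n_k}}\to0$. If not, pass to a subsequence with $\norm{y_{n_k}-x_{n_k}}\ge c>0$; then $\norm{x_{n_k+1}-x_{n_k}}_{n_k}=\lambda_{n_k}\norm{y_{n_k}-x_{n_k}}_{n_k}\to0$ forces $\lambda_{n_k}\to0$, so eventually $\lambda_{n_k}\le\sigma\theta$ and \ref{prop:20151220b_b2} holds with $w_k:=J_{n_k}(x_{n_k},\gamma_{n_k},\lambda_{n_k}/\theta)=x_{n_k}+(\lambda_{n_k}/\theta)(y_{n_k}-x_{n_k})\in\dom g$, so $w_k-x_{n_k}=\theta^{-1}(x_{n_k+1}-x_{n_k})\to0$. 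Dividing the inequality in \ref{prop:20151220b_b2} by $\norm{w_k-x_{n_k}}$ and using $\norm{w_k-x_{n_k}}_{n_k}=(\lambda_{n_k}/\theta)\norm{y_{n_k}-x_{n_k}}_{n_k}$, $\gamma_{n_k}\le\bar\gamma$ and \eqref{eq:norms}, one gets
\begin{equation*}
\frac{\bigl|f(w_k)-f(x_{n_k})-\scal{w_k-x_{n_k}}{\nabla f(x_{n_k})}\bigr|}{\norm{w_k-x_{n_k}}}\ >\ \frac{\delta\sqrt{\nu}}{\bar\gamma}\,\norm{y_{n_k}-x_{n_k}}_{n_k}\ \ge\ \frac{\delta\nu\,c}{\bar\gamma}\ >\ 0 ,
\end{equation*}
while the left-hand side tends to $0$ by Lemma~\ref{l:20160103a}\ref{l:20160103ai}\ref{l:20160103aib} (applied on $\dom g$, with $x_{n_k}\rightharpoonup\bar x\in\dom g$ and $w_k-x_{n_k}\to0$), a contradiction. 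Hence $\norm{y_{n_k}-x_{n_k}}\to0$ and, as before, $v_{n_k}\to0$.

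\emph{Main obstacle.} Part (i) and the construction of $v_k$ are routine; the crux is part (ii), namely upgrading the line-search-failure relations \ref{prop:20151220a_b}/\ref{prop:20151220b_b2}, which hold only pointwise in $k$, to estimates that are uniform along the subsequence. This is exactly what the weak-compactness Lemma~\ref{l:20160103b} and its corollaries~\ref{l:20160103a},~\ref{l:20160111w} (under {\bf H2}) provide, and the delicate point is ensuring that the auxiliary points $\tilde z_k$ (resp.\ $w_k$), the iterates, and the weak limit $\bar x$ all lie in $\dom g$, so that the uniform continuity of $\nabla f$ may be invoked — it is the monotonicity of $\big((f+g)(x_k)\big)_{k\in\N}$ that secures $\bar x\in\dom g$.
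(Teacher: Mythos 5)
Your proposal is correct and follows essentially the same route as the paper's proof: the same subgradient $v_k = \gamma_k^{-1}W_k(x_k-y_k)+\nabla f(y_k)-\nabla f(x_k)$, the same auxiliary points $J_k(x_k,\gamma_k/\theta,\lambda_k)$ resp.\ $J_k(x_k,\gamma_k,\lambda_k/\theta)$ compared via Lemma~\ref{lem:Jproperties}, and the same use of Lemma~\ref{l:20160103a}, Fact~\ref{p:20160125f} and \eqref{eq:norms} to pass to the original metric, with part (i) obtained, as in the paper, by contraposition of Proposition~\ref{lem:20160129a}\ref{lem:20160129ai} for the rescaled step. The only (cosmetic) difference is that you run the limit argument by contradiction along a further subsequence where $\gamma_{n_k}\to 0$ (resp.\ $\lambda_{n_k}\to 0$) so that the line-search-failure inequality eventually applies, whereas the paper uses a direct ``max'' estimate valid for all $k$ that covers simultaneously the cases $\gamma_k<\sigma$ and $\gamma_k\geq\sigma$ (resp.\ $\lambda_k\leq\sigma\theta$ and $\lambda_k>\sigma\theta$); both deliver the same conclusion.
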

\begin{proof}
\ref{prop:20151220aibis}:
It follows from Proposition~\ref{lem:20160129a}\ref{lem:20160129ai},
applied for each metric $\scal{\cdot}{\cdot}_k$,
by using a simple contradiction argument.

\ref{prop:20151220aii}:
Let, for every $k \in \N$, 
$y_{k} = \prox^k_{\gamma_{k} g} 
( x_{k} - \gamma_{k} \nabla^k f(x_{k}))$. Then,
%recalling \eqref{eq:20160129a}, 
for every $k \in \N$,
$(x_{k} - y_k)/\gamma_{k}+ \nabla^k f(y_k) - \nabla^k f(x_{k}) 
\in \partial^k (f+g)(y_k)$,
hence, by \eqref{eq:metric_subdiff},
\begin{equation}
\label{eq:20151227f}
(\forall\, k \in \N)\quad v_k: = W_k \frac{x_{k} - y_{k}}{\gamma_{k}} 
+ \nabla f(y_{k}) - \nabla f(x_{k}) 
\in \partial (f+g)(y_{k}).
\end{equation}
Let $(x_{n_k})_{k \in \N}$ be a subsequence of $(x_k)_{k \in \N}$
such that $x_{n_k} \rightharpoonup \bar{x}$ for some $\bar{x} \in \hh$. We note that,
since $\big((f+g)(x_k)\big)_{k \in \N}$ is decreasing  $\lim (f+g)(x_{n_k}) = 
\inf_{k} (f+g)(x_{n_k}) <+\infty$, hence it follows from the lower semicontinuity of $f+g$
 that $\bar{x} \in \dom (f+g) \subset \dom g$.\\
Suppose first that  \ref{prop:20151220a_b} is satisfied
and that $\inf_{k \in \N} \lambda_k>0$.
Let, for every $k \in \N$, $\tilde{\gamma}_k = \gamma_k/\theta$ and 
$\tilde{x}_{k} = J_k(x_k,\tilde{\gamma}_k, \lambda_k)$. Then, for every $k \in \N$,
by Lemma~\ref{lem:Jproperties}\ref{lem:Jpropertiesi} 
%(invoked for each metric 
%$\scal{\cdot}{\cdot}_k$ individually), we have
\begin{equation}
\label{eq:20151227a}
\norm{J_k(x_k,\tilde{\gamma}_k,\lambda_k) - x_k}_k 
\leq \frac{\tilde{\gamma}_k}{\gamma_k} \norm{J_k(x_k, \gamma_k,\lambda_k) - x_k}_k 
= \frac{1}{\theta}\norm{x_{k+1} - x_k}_k
\end{equation}
and  $\norm{x_{k+1} - x_k}_k = \norm{J_k(x_k, \gamma_k,\lambda_k) - x_k}_k 
\leq \norm{J_k(x_k, \tilde{\gamma}_k,\lambda_k ) - x_k}_k$;
hence, recalling also \eqref{eq:norms},
\begin{equation}
\label{eq:20151227b}
\sqrt{\nu} \frac{\norm{y_k - x_k}}{\gamma_k } 
\leq \frac{\norm{y_k - x_k}_k}{\gamma_k } 
=\frac{\norm{x_{k+1} - x_k}_k}{\gamma_k \lambda_k} 
\leq \frac{\norm{\tilde{x}_{k} - x_k}_k}{\gamma_k \lambda_k}.
\end{equation}
Moreover, since \ref{prop:20151220a_b} is satisfied, then
\begin{equation}
\label{eq:20151229e}
(\forall\, k \in \N)\  
\max\Big\{ \big\lVert \nabla^k f(\tilde{x}_{k}) - \nabla^k f(x_k)\big\rVert_k, 
\frac{\delta \theta}{\sigma} \frac{\norm{\tilde{x}_{k} - x_k}_k}{\lambda_k} \Big\}
\geq \delta \theta\frac{\norm{\tilde{x}_{k} - x_k}_k}{\lambda_k \gamma_k}.
\end{equation}
Now, since $\norm{x_{k+1} - x_{k}}_k \to 0$, we derive from \eqref{eq:20151227a}
and \eqref{eq:norms},
that $\tilde{x}_{k} - x_k \to 0$ and since 
$x_{n_k} \rightharpoonup \bar{x}$,
it follows from Lemma~\ref{l:20160103a}\ref{l:20160103ai} that  
$\norm{\nabla f(\tilde{x}_{n_k}) - \nabla f(x_{n_k})} \to 0$,
hence, by Fact~\ref{p:20160125f} and the fact that $\nu \leq \nu_k$, 
we have
\begin{equation}
\label{eq:20160204g}
\big\lVert\nabla^k f(\tilde{x}_{n_k}) - \nabla^k f(x_{n_k})\big\rVert_k \to 0.
\end{equation}
Moreover, 
\begin{equation}
\label{eq:20160125h}
\frac{\norm{\tilde{x}_{k} - x_k}_k}{\lambda_k} \leq 
\frac{\norm{\tilde{x}_{k} - x_k}_k}{\inf_{k \in \N} \lambda_k} \to 0.
\end{equation}
Thus, \eqref{eq:20160204g}, \eqref{eq:20160125h}, 
and \eqref{eq:20151229e} imply
$\norm{\tilde{x}_{n_k} - x_{n_k}}_k/(\gamma_{n_k} \lambda_{n_k}) \to 0$
and so, by 
\eqref{eq:20151227b},
\begin{equation}
\label{eq:20160109g}
\frac{\norm{y_{n_k} - x_{n_k}}}{\sup_{k \in \N}\gamma_{k}}
\leq \frac{\norm{y_{n_k} - x_{n_k}}}{\gamma_{n_k}}  \to 0.
\end{equation}
Finally, since $y_{n_k} - x_{n_k} \to 0$ and $x_{n_k} \rightharpoonup \bar{x}$,
by another application of Lemma~\ref{l:20160103a}\ref{l:20160103ai}, we have that
$\norm{\nabla f(y_{n_k}) - \nabla f(x_{n_k})} \to 0$. This 
 together with \eqref{eq:20160109g}, \eqref{eq:20151227f}
 and the fact that $\sup_{k \in \N} \norm{W_k} \leq \mu$,
gives $v_{n_k} \to 0$.\\
Now suppose that 
 \ref{prop:20151220b_b2} is satisfied and that
$\inf_{k \in \N} \gamma_k>0$.
Set, for every $k \in \N$, $\tilde{\lambda}_k = \lambda_k/\theta$ and 
$\tilde{x}_{k} = J_k(x_k,\gamma_k,\tilde{\lambda}_k)$. 
Then, 
by Lemma~\ref{lem:Jproperties}\ref{lem:Jpropertiesi2},
for every $k \in \N$,
\begin{equation}
\label{eq:20160106q}
\lVert J_k(x_k,\gamma_k,\tilde{\lambda}_k) - x_k \rVert_k
= \frac{\tilde{\lambda}_k}{\lambda_k} \norm{J_k(x_k,\gamma_k, \lambda_k) - x_k}_k 
= \frac{1}{\theta}\norm{x_{k+1} - x_k}_k
\end{equation}
and  $\norm{x_{k+1} - x_k}_k = \norm{J_k(x_k, \gamma_k, \lambda_k) - x_k}_k 
\leq \lVert J_k(x_k, \gamma_k, \tilde{\lambda}_k ) - x_k \rVert_k$;
hence
\begin{equation}
\label{eq:20160106o}
\sqrt{\nu} \frac{\norm{y_k - x_k}}{\gamma_k } 
\leq \frac{\norm{y_k - x_k}_k}{\gamma_k } 
=\frac{\norm{x_{k+1} - x_k}_k}{\gamma_k \lambda_k} 
\leq \frac{\norm{\tilde{x}_{k} - x_k}_k}{\gamma_k \lambda_k}.
\end{equation}
Moreover, since \ref{prop:20151220b_b2} is satisfied,
\begin{equation}
\label{eq:20160106p}
(\forall\, k \in \N)\  
\max\bigg\{\frac{\lvert f(\tilde{x}_{k}) - f(x_k) - \scal{\tilde{x}_{k} -x_k}{\nabla f(x_k)}_k\rvert }
{\norm{\tilde{x}_{k} - x_k}_k}, \frac{ \delta}{ \sigma } 
\frac{\norm{\tilde{x}_{k} - x_k}_k}{\gamma_k} \bigg\}
> \delta \theta\frac{\norm{\tilde{x}_{k} - x_k}_k}{\gamma_k \lambda_k}.
\end{equation}
Now, since $\norm{x_{k+1} - x_{k}}_k \to 0$, we derive from \eqref{eq:20160106q}
and \eqref{eq:norms},
that $\tilde{x}_{k} - x_k \to 0$ and,
since $x_{n_k} \rightharpoonup \bar{x}$,
it follows from Lemma~\ref{l:20160103a}\ref{l:20160103ai} that 
\begin{equation*}
\frac{\lvert f(\tilde{x}_{n_k}) - f(x_{n_k}) - \scal{\tilde{x}_{n_k} -x_{n_k}}{\nabla f(x_{n_k})}\rvert}
{\norm{\tilde{x}_{n_k} - x_{n_k}}} \to 0.
\end{equation*}
Since $\scal{\tilde{x}_{n_k} -x_{n_k}}{\nabla f(x_{n_k})} 
=  \scal{\tilde{x}_{n_k} -x_{n_k}}{\nabla^k f(x_{n_k})}_k$ and 
$\nu^{1/2}/\norm{\tilde{x}_{n_k} -x_{n_k}}_k \leq 1/\norm{\tilde{x}_{n_k} -x_{n_k}}$,
we have
\begin{equation}
\label{eq:20160106r}
\frac{\lvert f(\tilde{x}_{n_k}) - f(x_{n_k}) 
- \scal{\tilde{x}_{n_k} -x_{n_k}}{\nabla^k f(x_{n_k})}_k\rvert}
{\norm{\tilde{x}_{n_k} - x_{n_k}}_k} \to 0.
\end{equation}
Moreover,
\begin{equation}
\label{eq:20160125n}
\frac{\norm{\tilde{x}_{k} - x_k}_k}{\gamma_k} 
\leq \frac{\norm{\tilde{x}_{k} - x_k}_k}{\inf_{k \in \N} \gamma_k} \to 0.
\end{equation}
Now it follows from \eqref{eq:20160106p}, \eqref{eq:20160106r},
\eqref{eq:20160125n}, and 
\eqref{eq:20160106o}
 that
\begin{equation}
\label{eq:20160106t}
\frac{\norm{y_{n_k} - x_{n_k}}}{\sup_{k \in \N} \gamma_k} 
\leq \frac{\norm{y_{n_k} - x_{n_k}}}{\gamma_{n_k}} \to 0
%=\frac{x_{n_k +1 } - x_{n_k}}{\lambda_{n_k}}\to 0
\end{equation}
and, since
$x_{n_k} \rightharpoonup \bar{x}$, we derive again from 
 Lemma~\ref{l:20160103a}\ref{l:20160103ai} that
\begin{equation}
\label{eq:20160109e}
\norm{\nabla f(y_{n_k}) - \nabla f(x_{n_k})} \to 0.
\end{equation}
Therefore, since $\sup_{k \in \N} \norm{W_k} \leq \mu$, 
 \eqref{eq:20151227f}, \eqref{eq:20160106t}, and
\eqref{eq:20160109e}, yields
$v_{n_k} \to 0$.
\qquad\end{proof}

We finally present the main convergence theorem.

\begin{theorem}
\label{thm:convergence}
Assume that {\bf H1}--{\bf H3} hold and that either of the two {\bf H4} or {\bf H5} hold.
Let $(x_{k})_{k \in \N}$ and $(y_{k})_{k \in \N}$ 
be generated by algorithm {\rm (VM-FBS)} using one of the line search procedures
\ref{LS1}--\ref{LS3} for determining the parameters
$(\gamma_k)_{k \in \N}$ and $(\lambda_k)_{k \in \N}$.
Set $S_* = \argmin_\hh (f+g)$.
Then the following hold.
\begin{enumerate}[{\rm(i)}]
\item\label{thm:convergencei0} ${((f+g)(x_k))}_{k \in \N}$ is decreasing.
\item\label{thm:convergenceii01}  If $\inf_{k \in \N}(f+g)(x_k)>-\infty$, then
$\sum_{k \in \N} \norm{x_{k+1} - x_k}^2<+\infty$.
\item\label{thm:convergencei} 
Suppose that $S_* \neq \varnothing$. Then 
\begin{enumerate}[$(a)$]
\item  $(x_{k})_{k \in \N}$ and  $(y_{k})_{k \in \N}$ weakly converge 
to the same point in $S_*$.
\item $(f+g)(y_k) \to \inf_{\hh}(f+g)$.
\item If $\sum_{k \in \N} \gamma_k\lambda_k = +\infty$,
 then $(f+g)(x_k) \to \inf_{\hh}(f+g)$.
\item\label{thm:convergenceid}  If  $\inf_{k \in \N} \gamma_k\lambda_k> 0$, then
$\big( (f+g)(x_k) - \inf_{\hh}(f+g)\big) = o(1/k)$.
\end{enumerate}
\item\label{thm:convergenceii}  
If $S_* = \varnothing$, then 
 $\norm{x_k}\to +\infty$ and $(f+g)(x_k) \to \inf_\hh (f+g)$.
\end{enumerate}
\end{theorem}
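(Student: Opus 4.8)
The plan is to read off the theorem from the abstract principle of Theorem~\ref{thm:convergence02}, applied with $h=f+g$ and a suitable sequence of Hilbert norms ${(\abs{\cdot}_k)}_{k\in\N}$. Under {\bf H4} one takes $\abs{\cdot}_k=\norm{\cdot}_k$: the associated operators are the $W_k$, and since {\bf H4} is exactly hypothesis \ref{p:fejer_a} of Fact~\ref{p:fejer}, they converge strongly. Under {\bf H5} one takes instead the constant sequence $\abs{\cdot}_k\equiv\norm{\cdot}$, whose associated operator is $\mathrm{Id}$ for every $k$, hence trivially convergent; this is where {\bf H5} enters (it does not force $W_k$ to converge). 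In both cases $\nu\norm{\cdot}^2\le\abs{\cdot}_k^2$ by {\bf H3}. So everything reduces to verifying \ref{prop:genconv_a}, \ref{prop:genconv_b}, \ref{prop:genconv_c} of Proposition~\ref{prop:genconv} for $h=f+g$ — the last two only when $\inf_{k\in\N}(f+g)(x_k)>-\infty$, the case $\inf_{k\in\N}(f+g)(x_k)=-\infty$ being a sub-case of $S_*=\varnothing$ already handled by Theorem~\ref{thm:convergence02}.

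The first step is to note that each line search forces one of \ref{lem:20160129a_a}, \ref{lem:20160129a_b}, \ref{lem:20160129a_c} of Proposition~\ref{lem:20160129a} to hold in the metric $\scal{\cdot}{\cdot}_k$ for every $k$: writing $x_{k+1}=J_k(x_k,\gamma_k,\lambda_k)$ and using $\scal{d}{\nabla^k f(x_k)}_k=\scal{d}{\nabla f(x_k)}$ from \eqref{eq:20160204b}, rules \ref{LS1} and \ref{LS2} are precisely \ref{lem:20160129a_b}, rule \ref{LS3} is precisely \ref{lem:20160129a_a}, and \ref{LS4} is \ref{lem:20160129a_c}. Hence Corollary~\ref{cor:20160202a} applies. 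By \ref{cor:20160202ai}, ${((f+g)(x_k))}_{k\in\N}$ is decreasing — this is item~\ref{thm:convergencei0}. Moreover $\sup_{k\in\N}\gamma_k<+\infty$ in every rule ($\gamma_k$ is chosen a priori with bounded supremum in \ref{LS2} and \ref{LS4}, and $\gamma_k\le\bar\gamma$ in \ref{LS1} and \ref{LS3}), so when $\inf_{k\in\N}(f+g)(x_k)>-\infty$, \ref{cor:20160202aii} gives $\sum_k\norm{x_{k+1}-x_k}_k^2/\gamma_k<+\infty$, whence $\sum_k\norm{x_{k+1}-x_k}^2<+\infty$ by \eqref{eq:norms} — item~\ref{thm:convergenceii01} — and also $\norm{x_{k+1}-x_k}_k\to0$. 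Finally, \ref{cor:20160202aiii}--\ref{cor:20160202aiv}, as packaged in Remark~\ref{rmk:20160202a}, show that \ref{prop:genconv_b} holds for $h=f+g$: under {\bf H4} with $\alpha_k=\gamma_k\lambda_k(1+\eta_k)$ and $\varepsilon_k$ a fixed multiple of $(f+g)(x_k)-(f+g)(x_{k+1})$ (summable by telescoping, once $\inf_k(f+g)(x_k)>-\infty$), and under {\bf H5} with $\alpha_k=\gamma_k\lambda_k/\nu_k$, $\eta_k=(\mu_k-\nu_k)/\nu\in\ell_+^1$, and the analogous $\varepsilon_k$.

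The second step is to verify \ref{prop:genconv_c} by means of Proposition~\ref{prop:20151220a}\ref{prop:20151220aii}, whose standing hypotheses — $((f+g)(x_k))_k$ decreasing, $\norm{x_{k+1}-x_k}_k\to0$, $\sup_k\gamma_k<+\infty$ — are now in place. One feeds it the correct alternative through the maximality built into each backtracking rule: if the accepted parameter is not the initial one, then dividing it by $\theta$ violates the defining test, which is exactly the relevant failure inequality in the metric $\scal{\cdot}{\cdot}_k$. Thus \ref{LS1} yields \ref{prop:20151220b_b} — hence \ref{prop:20151220a_b} by Proposition~\ref{prop:20151220a}\ref{prop:20151220aibis} — with $\sigma=\bar\gamma$, while the a priori choice of relaxation in \ref{LS1} gives $\inf_k\lambda_k>0$; \ref{LS3} yields \ref{prop:20151220a_b} with $\sigma=\bar\gamma$, again with $\inf_k\lambda_k>0$; \ref{LS2} yields \ref{prop:20151220b_b2} with $\sigma=\bar\lambda$ and $\inf_k\gamma_k>0$; and \ref{LS4} yields \ref{prop:20151220b_c2} — hence \ref{prop:20151220b_b2} — with $\sigma=\bar\lambda$ and $\inf_k\gamma_k>0$. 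In every case one of the two alternatives of Proposition~\ref{prop:20151220a}\ref{prop:20151220aii} is met, producing $(v_k)_{k\in\N}$ with $v_k\in\partial(f+g)(y_k)$ such that $y_{n_k}-x_{n_k}\to0$ and $v_{n_k}\to0$ along every weakly convergent subsequence of $(x_k)_{k\in\N}$ — which is \ref{prop:genconv_c} with $(y_k)_{k\in\N}$ the sequence generated by {\rm (VM-FBS)}.

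With \ref{prop:genconv_a}, \ref{prop:genconv_b}, \ref{prop:genconv_c} in hand, Theorem~\ref{thm:convergence02} delivers items~\ref{thm:convergenceii01}--\ref{thm:convergenceii} directly. The only translation needed is from the hypotheses on $(\alpha_k)_k$ to those on $(\gamma_k\lambda_k)_k$: since $1+\eta_k\ge1$ (under {\bf H4}) and $\nu_k\le\mu$ (under {\bf H5}), one has $\alpha_k\ge\gamma_k\lambda_k$, respectively $\alpha_k\ge\gamma_k\lambda_k/\mu$, so that $\sum_k\gamma_k\lambda_k=+\infty\Rightarrow\sum_k\alpha_k=+\infty$ and $\inf_k\gamma_k\lambda_k>0\Rightarrow\inf_k\alpha_k>0$; this gives parts $(c)$ and \ref{thm:convergenceid} of item~\ref{thm:convergencei}, and the case $\inf_k(f+g)(x_k)=-\infty$ is subsumed in $S_*=\varnothing$. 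The genuinely delicate point is the second step: recognising, for each of the four rules, the ``failure at $\gamma_k/\theta$'' (or at $\lambda_k/\theta$) inequality produced by maximality as the correct one among \ref{prop:20151220a_b}, \ref{prop:20151220b_b}, \ref{prop:20151220b_b2}, \ref{prop:20151220b_c2}, in the correct metric, and checking that the right alternative of Proposition~\ref{prop:20151220a}\ref{prop:20151220aii} is available. This is, however, bookkeeping rather than a real obstacle, the analytic substance having already been extracted in Propositions~\ref{lem:20160129a} and~\ref{prop:20151220a}.
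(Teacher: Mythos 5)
Your proposal is correct and follows essentially the same route as the paper's own proof: reduce to the abstract principle of Theorem~\ref{thm:convergence02} with $h=f+g$ (using $\norm{\cdot}_k$ under {\bf H4} and the fixed norm $\norm{\cdot}$ under {\bf H5}, exactly as in Remark~\ref{rmk:20160202a}), verify \ref{prop:genconv_a} and \ref{prop:genconv_b} via Proposition~\ref{lem:20160129a}, Corollary~\ref{cor:20160202a} and Remark~\ref{rmk:20160202a}, and verify \ref{prop:genconv_c} by feeding the maximality/failure inequalities of the backtracking rules into Proposition~\ref{prop:20151220a}\ref{prop:20151220aii}. The only difference is that you spell out the bookkeeping (the choice of $\sigma$, the map from each line search to the conditions \ref{prop:20151220a_b}, \ref{prop:20151220b_b}, \ref{prop:20151220b_b2}, \ref{prop:20151220b_c2}, and the passage from $\alpha_k$ to $\gamma_k\lambda_k$) that the paper's proof leaves implicit.
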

\begin{proof}
Let $h=f+g$.
For any proposed line search method,
one of the properties \ref{lem:20160129a_a}, \ref{lem:20160129a_b},
or \ref{lem:20160129a_c}  in Proposition~\ref{lem:20160129a}
is satisfied for every $k \in \N$ with respect to $\scal{\cdot}{\cdot}_k$, and
the corresponding property
\ref{prop:20151220a_b}, \ref{prop:20151220b_b}-\ref{prop:20151220b_b2},
or \ref{prop:20151220b_c2} 
in Proposition~\ref{prop:20151220a} is satisfied too. Therefore, 
by Corollary~\ref{cor:20160202a}\ref{cor:20160202ai}, 
$((f+g)(x_k))_{k \in \N}$ is decreasing and
\ref{prop:genconv_a}
in Proposition~\ref{prop:genconv}
is fulfilled.
Moreover, 
Remark~\ref{rmk:20160202a}
ensures that if $\inf_{k \in \N}(f+g)(x_k)>-\infty$,
then
  \ref{prop:genconv_b} 
in Proposition~\ref{prop:genconv}
is fulfilled for 
$(\alpha_k)_{k \in \N} = {(\beta_k\gamma_k \lambda_k)}_{k \in \N}$
(with $(\beta_k)_{k \in \N} \in \RPP^{\N}$ such that
$0<\inf_{k \in \N} \beta_k \leq \sup_{k \in \N} \beta_k<+\infty$).
Finally, 
Corollary~\ref{cor:20160202a}\ref{cor:20160202aii}
and Proposition~\ref{prop:20151220a}\ref{prop:20151220aii}
implies that if $\inf_{k \in \N}(f+g)(x_k)>-\infty$,
then \ref{prop:genconv_c} 
in Proposition~\ref{prop:genconv}
is  fulfilled too.
Then
the statements follow from Theorem~\ref{thm:convergence02}.
\qquad\end{proof}

\begin{remark}
\normalfont
In the proof of Theorem~\ref{thm:convergence}, we showed that when {\bf H5}
is in force, we apply Proposition~\ref{prop:genconv} 
using the metric of the Hilbert space $\hh$, instead of
 the variable metrics
${(\scal{\cdot}{\cdot}_k)}_{k \in \N}$ of {\bf H3}
(see Remark~\ref{rmk:20160202a}\ref{rmk:20160202aii}). 
This is why if we assume {\bf H5} we do not require
the convergence of the $W_k$'s.
\end{remark}

\begin{corollary}
\label{cor:convergence}
Assume that {\bf H1}--{\bf H3} hold and that either {\bf H4} or {\bf H5} hold.
Let $(x_{k})_{k \in \N}$   be generated by algorithm {\rm (VM-FBS)} using either 
 \ref{LS1} or \ref{LS3}
with ${(\lambda_k)}_{k \in \N} \equiv 1$ (no relaxation).
Set $S_* = \argmin_\hh (f+g)$.
Then the following hold.
\begin{enumerate}[{\rm(i)}]
\item\label{cor:convergence0i} ${((f+g)(x_k))}_{k \in \N}$ is decreasing and
 $(f+g)(x_k) \to \inf_{\hh}(f+g)$.
\item\label{cor:convergence0ii}  If $\inf_\hh(f+g)>-\infty$, then
$\sum_{k \in \N} \norm{x_{k+1} - x_k}^2<+\infty$.
\item\label{cor:convergencei} 
Suppose that $S_* \neq \varnothing$. Then 
\begin{enumerate}[$(a)$]
\item\label{cor:convergenceia}  $(x_{k})_{k \in \N}$ weakly converges to a point in $S_*$.
\item\label{cor:convergenceib}  If  $\inf_{k \in \N} \gamma_k> 0$, then
$\big( (f+g)(x_k) - \inf_{\hh}(f+g)\big) = o(1/k)$.
\end{enumerate}
\item\label{cor:convergenceii}  
If $S_* = \varnothing$, then 
 $\norm{x_k}\to +\infty$.
\end{enumerate}
\end{corollary}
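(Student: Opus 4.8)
The plan is to obtain the whole statement as a consequence of Theorem~\ref{thm:convergence}, exploiting the single observation that with $\lambda_k \equiv 1$ relaxation is trivial, so that $x_{k+1} = x_k + 1\cdot(y_k - x_k) = y_k$ for every $k \in \N$. First I would verify that Theorem~\ref{thm:convergence} is indeed applicable. Both \ref{LS1} and \ref{LS3} belong to the admissible family \ref{LS1}--\ref{LS3}, and the constraint $0 < \inf_{k}\lambda_k \le \sup_{k}\lambda_k \le 1$ imposed by those two rules holds because $\lambda_k \equiv 1$. Moreover, in \ref{LS1} and \ref{LS3} the stepsize has the form $\gamma_k = \bar\gamma\theta^{\,i}$ with $\theta \in \left]0,1\right[$ and $i \in \N$, hence automatically $\sup_{k \in \N}\gamma_k \le \bar\gamma < +\infty$; this is the only boundedness requirement on $(\gamma_k)_{k \in \N}$ used throughout Section~\ref{sec:convthms}. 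Therefore Theorem~\ref{thm:convergence} applies verbatim with $(\lambda_k)_{k \in \N} \equiv 1$.

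With this in hand, several conclusions are immediate. The monotonicity in \ref{cor:convergence0i} is Theorem~\ref{thm:convergence}\ref{thm:convergencei0}; statement \ref{cor:convergence0ii} is Theorem~\ref{thm:convergence}\ref{thm:convergenceii01}, after noting that $\inf_\hh(f+g) > -\infty$ forces $\inf_{k}(f+g)(x_k) > -\infty$; the weak convergence in \ref{cor:convergencei}\ref{cor:convergenceia} is the $(x_k)$-part of Theorem~\ref{thm:convergence}\ref{thm:convergencei}$(a)$; and \ref{cor:convergenceii} is exactly Theorem~\ref{thm:convergence}\ref{thm:convergenceii}. For the rate \ref{cor:convergencei}\ref{cor:convergenceib}, since $\lambda_k \equiv 1$ the hypothesis $\inf_{k}\gamma_k > 0$ coincides with $\inf_{k}\gamma_k\lambda_k > 0$, so Theorem~\ref{thm:convergence}\ref{thm:convergenceid} yields $\big((f+g)(x_k) - \inf_\hh(f+g)\big) = o(1/k)$.

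The only point needing a little care is the \emph{unconditional} convergence $(f+g)(x_k) \to \inf_\hh(f+g)$ asserted in \ref{cor:convergence0i}: one cannot simply invoke Theorem~\ref{thm:convergence}\ref{thm:convergencei}$(c)$, because the series $\sum_k \gamma_k\lambda_k = \sum_k\gamma_k$ need not diverge (\ref{LS1} and \ref{LS3} bound $\gamma_k$ from above but not from below). I would instead argue by cases on $S_*$. If $S_* = \varnothing$, then Theorem~\ref{thm:convergence}\ref{thm:convergenceii} already gives $(f+g)(x_k) \to \inf_\hh(f+g)$. If $S_* \neq \varnothing$, then Theorem~\ref{thm:convergence}\ref{thm:convergencei}$(b)$ gives $(f+g)(y_k) \to \inf_\hh(f+g)$, and since $y_k = x_{k+1}$ this reads $(f+g)(x_{k+1}) \to \inf_\hh(f+g)$, whence $(f+g)(x_k) \to \inf_\hh(f+g)$. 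There is no substantial obstacle here; all the real work is already done in Theorem~\ref{thm:convergence}, and the content of the corollary is precisely that, in the no-relaxation regime, the convergence of the auxiliary sequence $(y_k)$ feeds directly back into the convergence of $(x_k)$, thereby dispensing with the hypothesis $\sum_k \gamma_k\lambda_k = +\infty$.
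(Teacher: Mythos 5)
Your proposal is correct and follows essentially the same route as the paper, which states this corollary as a direct consequence of Theorem~\ref{thm:convergence} using the observation that $\lambda_k\equiv 1$ gives $x_{k+1}=y_k$. Your extra care in deriving the unconditional convergence $(f+g)(x_k)\to\inf_\hh(f+g)$ from part (iii)$(b)$ of the theorem (rather than from the divergence of $\sum_k\gamma_k\lambda_k$, which is not guaranteed) is exactly the right way to fill in the one non-trivial step.
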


\begin{remark}
\normalfont
\label{rmk:20160114b}
In \cite[Theorem~4.2 and Theorem~4.3]{Cruz15} the same line search \ref{LS3}
is studied for the stationary (metric) forward-backward algorithm.
However, even in this setting, the corresponding results given in Theorem~\ref{thm:convergence}
are more general and stronger. More precisely
in \cite[Method 1]{Cruz15}: $a)$
no relaxation is allowed, that is $\lambda_k \equiv 1$;
$b)$  $\delta$ is required to be strictly less than $1/2$ (this halves
the stepsizes compared with those of \ref{LS3} --- see also 
Remark~\ref{rmk:20160110a}\ref{rmk:20160110aii});
$c)$ $\nabla f$ is required to be uniformly continuous on any bounded subsets of $\dom g$
and to map bounded sets into bounded sets; 
%-- which prevents to treat, e.g., function that goes to infinity
%at the boundary.
$d)$ the little-$o$ rate of convergence is provided
only for $\hh$ finite-dimensional.
\end{remark}

Proposition~\ref{prop:20151220a} 
and Theorem~\ref{thm:convergence02}
allow also to obtain new convergence results for the standard variable
metric forward-backward algorithm (without backtracking) \cite{Comb2014} 
by requiring the Lipschitz continuity of the gradient on the domain of $g$ only.

\begin{theorem}
\label{thm:FBS}
Assume that {\bf H1} and {\bf H3} hold and that either {\bf H4} or {\bf H5} hold.
Suppose 
$f$ is G\^ateaux differentiable on $\dom g$ and $\nabla f$
is $L$-Lipschitz continuous on $\dom g$ for some $L \in \RP$.
Define ${(x_k)}_{k \in \N}$ and ${(y_k)}_{k \in \N}$ as in  {\rm (VM-FBS)}
and suppose  ${(\lambda_k)}_{k \in \N}$
and ${(\gamma_k)}_{k\in \N}$ are chosen a priori in a such way that
 $\inf_{k \in \N} \lambda_k>0$, 
$\inf_{k \in \N} \gamma_k>0$, and $\sup_{k \in \N} \gamma_k \lambda_k/\nu_k<2/L$.
Then the conclusions of Theorem~\ref{thm:convergence} hold.
\end{theorem}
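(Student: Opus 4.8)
The plan is to deduce Theorem~\ref{thm:FBS} from the abstract principle Theorem~\ref{thm:convergence02} applied to $h=f+g$, following exactly the route of the proof of Theorem~\ref{thm:convergence}, with the single modification that property \ref{prop:genconv_c} of Proposition~\ref{prop:genconv} is now obtained directly from the global Lipschitz continuity of $\nabla f$ rather than from Proposition~\ref{prop:20151220a} (which is tailored to the backtracking variants and does not literally apply when $\gamma_k,\lambda_k$ are chosen a priori). We may assume $L>0$, the case $L=0$ being easier since then the sufficient-decrease inequality below is trivially satisfied. Note first that $\sup_{k\in\N}\gamma_k\le(\sup_{k\in\N}\gamma_k\lambda_k/\nu_k)\,\mu/\inf_{k\in\N}\lambda_k<+\infty$, using $\nu_k\le\mu$ and $\inf_k\lambda_k>0$.

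\emph{Step 1: sufficient decrease.} Fix $k\in\N$. Since $x_k,y_k\in\dom g$, $\dom g$ is convex and $x_{k+1}=x_k+\lambda_k(y_k-x_k)$ with $\lambda_k\in\left]0,1\right]$, we have $[x_k,x_{k+1}]\subset[x_k,y_k]\subset\dom g\subset\inter\dom f$ (Remark~\ref{rmk:20160105a}). As $\nabla f$ is $L$-Lipschitz on $\dom g$, the Descent Lemma (Fact~\ref{fact_desclem}) gives
\[
f(x_{k+1})-f(x_k)-\scal{x_{k+1}-x_k}{\nabla f(x_k)}\le\frac{L}{2}\norm{x_{k+1}-x_k}^2 .
\]
Pick $\delta\in\left]0,1\right[$ with $\sup_{k\in\N}\gamma_k\lambda_k/\nu_k\le 2\delta/L$; then $\tfrac{L}{2}\norm{x_{k+1}-x_k}^2\le\tfrac{\delta\nu_k}{\gamma_k\lambda_k}\norm{x_{k+1}-x_k}^2\le\tfrac{\delta}{\gamma_k\lambda_k}\norm{x_{k+1}-x_k}_k^2$, and using \eqref{eq:20160204b} to rewrite $\scal{x_{k+1}-x_k}{\nabla f(x_k)}=\big\langle x_{k+1}-x_k\,\big|\,\nabla^k f(x_k)\big\rangle_k$, the displayed inequality is exactly property \ref{lem:20160129a_b} of Proposition~\ref{lem:20160129a} written in the metric $\scal{\cdot}{\cdot}_k$, for every $k\in\N$. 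Consequently, by Corollary~\ref{cor:20160202a}\ref{cor:20160202ai} the sequence $((f+g)(x_k))_{k\in\N}$ is decreasing, i.e. \ref{prop:genconv_a} of Proposition~\ref{prop:genconv} holds for $h=f+g$; and, whenever $\inf_k(f+g)(x_k)>-\infty$, Corollary~\ref{cor:20160202a}\ref{cor:20160202aii} gives $\sum_k\norm{x_{k+1}-x_k}^2<+\infty$ while Remark~\ref{rmk:20160202a} furnishes \ref{prop:genconv_b} with $h=f+g$, $\alpha_k=\beta_k\gamma_k\lambda_k$ for a sequence $(\beta_k)_k$ bounded away from $0$ and $+\infty$ ($\beta_k=1+\eta_k$ under {\bf H4}, $\beta_k=1/\nu_k$ under {\bf H5}), and the relevant sequence of norms ($\norm{\cdot}_k$ under {\bf H4}, $\norm{\cdot}$ under {\bf H5}), whose associated positive operators converge strongly — by Fact~\ref{p:fejer} under {\bf H4}, trivially under {\bf H5}.

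\emph{Step 2: property \ref{prop:genconv_c}.} Assume $\inf_k(f+g)(x_k)>-\infty$ (otherwise $S_*=\varnothing$ and Theorem~\ref{thm:convergence02} already applies through Proposition~\ref{prop:genconv}\ref{prop:genconv_ibis}). From $\sum_k\norm{x_{k+1}-x_k}^2<+\infty$ and $x_{k+1}-x_k=\lambda_k(y_k-x_k)$ with $\inf_k\lambda_k>0$ we get $y_k-x_k\to 0$. Set, exactly as in \eqref{eq:20151227f},
\[
v_k=W_k\frac{x_k-y_k}{\gamma_k}+\nabla f(y_k)-\nabla f(x_k),
\]
which, by the definition of $y_k$ and \eqref{eq:20160114a}, \eqref{eq:metric_subdiff}, satisfies $v_k\in\partial(f+g)(y_k)$. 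Then, using $\norm{W_k}\le\mu$, $\inf_k\gamma_k>0$ and the $L$-Lipschitz continuity of $\nabla f$ on $\dom g$,
\[
\norm{v_k}\le\mu\,\frac{\norm{x_k-y_k}}{\inf_{k\in\N}\gamma_k}+L\,\norm{y_k-x_k}\longrightarrow 0 .
\]
In particular, for every weakly convergent subsequence $(x_{n_k})_k$ of $(x_k)_k$ we have $x_{n_k}-y_{n_k}\rightharpoonup 0$ and $v_{n_k}\to 0$, so \ref{prop:genconv_c} holds. Theorem~\ref{thm:convergence02} then delivers all the conclusions of Theorem~\ref{thm:convergence}.

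\emph{Main difficulty.} None of the steps is hard; the substance is packaged in Proposition~\ref{prop:genconv}, Corollary~\ref{cor:20160202a} and Remark~\ref{rmk:20160202a}. The only genuinely new point is Step~2: since the backtracking-specific Proposition~\ref{prop:20151220a} cannot be invoked here, \ref{prop:genconv_c} must be checked by hand — but global Lipschitz continuity makes this immediate, the two parts of $v_k$ being controlled respectively by $\norm{y_k-x_k}\to 0$ together with $\inf_k\gamma_k>0$, and by the Lipschitz bound. Minor care is also needed for the edge case $L=0$ and to record $\sup_k\gamma_k<+\infty$ (needed in Corollary~\ref{cor:20160202a}\ref{cor:20160202aii}), which follows from the stepsize bound with $\inf_k\lambda_k>0$ and $\nu_k\le\mu$.
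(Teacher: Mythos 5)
Your proof is correct, and its skeleton coincides with the paper's: the Descent Lemma plus the choice of $\delta$ with $\sup_k\gamma_k\lambda_k/\nu_k\le 2\delta/L$ yields property \ref{lem:20160129a_b} of Proposition~\ref{lem:20160129a} in each metric $\scal{\cdot}{\cdot}_k$, and then Corollary~\ref{cor:20160202a}, Remark~\ref{rmk:20160202a} and Theorem~\ref{thm:convergence02} are invoked exactly as in the paper. The only genuine divergence is how property \ref{prop:genconv_c} of Proposition~\ref{prop:genconv} is obtained. The paper does use Proposition~\ref{prop:20151220a}: since $\inf_k\gamma_k>0$, condition \ref{prop:20151220b_b} there holds vacuously (choose $\sigma<\inf_k\gamma_k$ so the premise $\gamma_k<\sigma$ never fires), hence \ref{prop:20151220a_b} holds and part \ref{prop:20151220aii} applies with $\inf_k\lambda_k>0$. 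So your parenthetical claim that this proposition ``does not literally apply'' to a priori stepsizes is inaccurate --- that vacuity is precisely the paper's trick --- but this does not affect your argument, since you instead verify \ref{prop:genconv_c} by hand: $v_k=W_k(x_k-y_k)/\gamma_k+\nabla f(y_k)-\nabla f(x_k)\in\partial(f+g)(y_k)$ as in \eqref{eq:20151227f}, and $\norm{v_k}\le\mu\norm{x_k-y_k}/\inf_k\gamma_k+L\norm{y_k-x_k}\to 0$. This is cleaner, gives $v_k\to 0$ along the whole sequence rather than along subsequences, and avoids the machinery of Proposition~\ref{prop:20151220a} entirely. You also make explicit the boundedness $\sup_k\gamma_k<+\infty$ (needed for Corollary~\ref{cor:20160202a}\ref{cor:20160202aii}, Remark~\ref{rmk:20160202a} and, in the paper's route, Proposition~\ref{prop:20151220a}\ref{prop:20151220aii}), which the paper leaves implicit; note only that your derivation of it uses $L>0$, so your dismissal of the degenerate case $L=0$ (where the constraint $\sup_k\gamma_k\lambda_k/\nu_k<2/L$ is empty and $(\gamma_k)$ need not be bounded) is not fully justified as stated --- though the paper's own proof is silent on this edge case as well.
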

\begin{proof}
Assumption  {\bf H2} is fulfilled too. 
Set $h = f+g$ and $(\alpha_k)_{k \in \N} = (\beta_k\gamma_k \lambda_k)_{k \in \N}$,
where $\beta_k$ is equal to $(1+\eta_k)$ if {\bf H4} holds,
and $1/\nu_k$ if {\bf H5} holds.
Since $\sup_{k \in \N} \gamma_k \lambda_k/\nu_k < 2/L$, 
there exists $\delta \in \left]0,1\right[$ 
such that $\sup_{k \in \N} \gamma_k \lambda_k/\nu_k \leq 2 \delta/L$.
Hence, 
since $\nabla f$ is 
$L$-Lipschitz continuous on $\dom g$ and $\{x_k \,\vert\,k\in \N\} \subset \dom g$, 
Fact~\ref{fact_desclem} 
ensures that 
 \begin{equation*}
(\forall\, k \in \N)\qquad f(x_{k+1}) - f(x_k) 
- \scal{x_{k+1} - x_k}{ \nabla f(x_k)} \leq \frac{L}{2} \norm{x_{k+1} - x_k}^2.
\end{equation*}
Since $\scal{x_{k+1} - x_k}{\nabla f(x_k)} = \scal{x_{k+1} - x_k}{\nabla^k f(x_k)}_k$
and $L/(2 \nu_k) \leq \delta/(\gamma_k \lambda_k)$, 
by \eqref{eq:norms}, we have
\begin{equation*}
(\forall\, k \in \N)\qquad
\frac{L}{2} \norm{x_{k+1} - x_k}^2 \leq \frac{L}{2 \nu_k} \norm{x_{k+1} - x_k}^2_k
\leq \frac{\delta}{\gamma_k \lambda_k} \norm{x_{k+1} - x_k}^2_k.
\end{equation*}
Thus,  \ref{lem:20160129a_b} in Proposition~\ref{lem:20160129a}
is satisfied
for every $k \in \N$ with respect to $\scal{\cdot}{\cdot}_k$.
Moreover, since $\inf_{k \in \N} \gamma_k>0$, condition 
\ref{prop:20151220b_b} in Proposition~\ref{prop:20151220a}
is also trivially satisfied.
Then, by 
Corollary~\ref{cor:20160202a}, Remark~\ref{rmk:20160202a},
and Proposition~\ref{prop:20151220a},
we have that 
 \ref{prop:genconv_a} in Proposition~\ref{prop:genconv} is fulfilled
and that, if $\inf_{k \in \N} h(x_k) > -\infty$, 
conditions  \ref{prop:genconv_b} 
and \ref{prop:genconv_c}
in Proposition~\ref{prop:genconv}
are fulfilled too. Then the statements follow
from Theorem~\ref{thm:convergence02}.
\end{proof}

\begin{remark}\ 
\normalfont
\begin{enumerate}[(i)]
\item Theorem~\ref{thm:FBS} provides a worst case rate of convergence which is new.
\item Theorem~\ref{thm:FBS} shows that the gradient descent stepsize 
parameter $\gamma_k$, the relaxation parameter $\lambda_k$,
and the minimum eigenvalues of the metric $\scal{\cdot}{\cdot}_k$
are linked together by the condition $\sup_{k \in \N} \gamma_k \lambda_k/\nu_k <2/L$.
Thus, if $\lambda_k$ is reduced (or $\nu_k$ is increased), one is allowed to enlarge
the stepsize $\gamma_k$, which may therefore exceed $2/L$.
This result complements that in \cite{Comb15} (for stationary metrics), where 
the parameters $\gamma_k$'s and $\lambda_k$'s 
appear linked too, but in that case,
it is the relaxation parameter that can go beyond the usual bound $1$.
\item In Theorem~\ref{thm:FBS} $\nabla f$ is not required 
to have full domain. Thus, the above result is not covered by the
 convergence theory developed in \cite{Comb2014,Smms05,Comb15}, 
since there a full domain of the gradient is required by
 the application of the Baillon-Haddad  theorem.
This aspect has been also noted in \cite{Cho14}.
\end{enumerate}
\end{remark}

In view of Theorem~\ref{thm:convergence}\ref{thm:convergencei}\ref{thm:convergenceid}
it is important to know conditions that guarantees that 
$(\gamma_k \lambda_k)_{k \in \N}$ remains bounded away from zero,
since in such case  (VM-FBS) has $o(1/k)$ rate of convergence
in function values. We now addresses this issue.

\begin{proposition}
\label{p:20151228a}
Assume that {\bf H3}  and  either {\bf H4} or {\bf H5} hold.
Let $f, g \in \Gamma_0(\hh)$ with $\dom g \subset \dom f$
and suppose that $f$ is G\^ateaux differentiable on $\dom g$.
%$f$ and $g$ satisfy assumption {\bf H1} and {\bf H2}.
%Then the following hold.
\begin{enumerate}[{\rm(i)}]
\item\label{p:20151228ai} 
Suppose that $\nabla f$ is globally Lipschitz continuous on $\dom g$ 
with constant $L$ and that 
$(\gamma_k)_{k \in \N}$, and ${(\lambda_k)}_{k \in \N}$
are generated through algorithm {\rm (VM-FBS)} using any of 
the line seaches \ref{LS1}--\ref{LS3}.
 Then
\begin{itemize}
\item for \ref{LS1}, we have
$\inf_{k \in \N} \gamma_k \geq \min\{\bar{\gamma}, 
2 \delta \theta \nu/(L \sup_{k \in \N} \lambda_k\})\}$;
\item for \ref{LS2} or \ref{LS4}, 
we have
$\inf_{k \in \N} \lambda_k \geq \min\{\bar{\lambda}, 
2\delta \theta \nu/(L \sup_{k \in \N} \gamma_k)\}$.
\item for \ref{LS3}, we have
 $\inf_{k \in \N} \gamma_k \geq \min\{\bar{\gamma}, 
 \delta \theta \nu/(L \sup_{k \in \N} \lambda_k\})\}$.
\end{itemize}
\item\label{p:20151228aii}  
Suppose that $S_*=\argmin_\hh (f+g) \neq \varnothing$ and that
$\nabla f$ is Lipschitz continuous on any weakly compact subset of $\dom g$.
Let $(\gamma_k)_{k \in \N}$, and ${(\lambda_k)}_{k \in \N}$
be defined according to algorithm {\rm (VM-FBS)} using any of
the line search procedures \ref{LS1}--\ref{LS3}.
Then $\inf_{k \in \N}\gamma_k >0$ and $\inf_{k \in \N} \lambda_k>0$.
\end{enumerate}
\end{proposition}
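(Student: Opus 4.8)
The plan is to treat both parts by a common mechanism: the backtracking structure forces that, as soon as $\gamma_k<\bar\gamma$ (resp.\ $\lambda_k<\bar\lambda$), the previously rejected trial value $\gamma_k/\theta$ (resp.\ $\lambda_k/\theta$) \emph{violates} the acceptance test; combining this violation with a descent-type estimate coming from Lipschitz continuity of $\nabla f$ then bounds $\gamma_k\lambda_k$ from below, after which dividing by the complementary parameter gives the stated bounds.

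For \ref{p:20151228ai}, fix $k$ and assume first that one of the stepsize rules \ref{LS1}, \ref{LS3} is used; if $\gamma_k<\bar\gamma$, put $\tilde x_k=J_k(x_k,\gamma_k/\theta,\lambda_k)$, a point of $\dom g$ (a relaxed $\prox^k$-step). The test at $\gamma_k/\theta$ then fails. For \ref{LS1} this reads $f(\tilde x_k)-f(x_k)-\scal{\tilde x_k-x_k}{\nabla f(x_k)}>\frac{\delta\theta}{\gamma_k\lambda_k}\norm{\tilde x_k-x_k}_k^2$; since $\dom g\subset\inter\dom f$ (Remark~\ref{rmk:20160105a}) and $\nabla f$ is $L$-Lipschitz on $\dom g$, Fact~\ref{fact_desclem} together with \eqref{eq:norms} bounds the left side by $\frac L2\norm{\tilde x_k-x_k}^2\le\frac L{2\nu}\norm{\tilde x_k-x_k}_k^2$, whence (we may assume $\tilde x_k\ne x_k$, otherwise $x_k\in\argmin(f+g)$ and the test holds) $\gamma_k\lambda_k>2\delta\theta\nu/L$. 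For \ref{LS3} the failed test is $\norm{\nabla^k f(\tilde x_k)-\nabla^k f(x_k)}_k>\frac{\delta\theta}{\gamma_k\lambda_k}\norm{\tilde x_k-x_k}_k$, and Fact~\ref{p:20160125f} with \eqref{eq:norms} gives $\norm{\nabla^k f(\tilde x_k)-\nabla^k f(x_k)}_k\le\frac L{\nu}\norm{\tilde x_k-x_k}_k$, whence $\gamma_k\lambda_k>\delta\theta\nu/L$. Dividing by $\lambda_k\le\sup_k\lambda_k$ gives the claimed lower bound for $\inf_k\gamma_k$. For the relaxation rules \ref{LS2}, \ref{LS4} one argues symmetrically with $\gamma_k$ fixed and $\tilde x_k=J_k(x_k,\gamma_k,\lambda_k/\theta)\in[x_k,y_k]$: the failed test at $\lambda_k/\theta<\bar\lambda$ yields, for \ref{LS2} directly via Fact~\ref{fact_desclem}, and for \ref{LS4} via the contrapositive of \ref{lem:20160129a_b}$\Rightarrow$\ref{lem:20160129a_c} in Proposition~\ref{lem:20160129a} (read with relaxation $\lambda_k/\theta$ in the metric $\scal{\cdot}{\cdot}_k$) followed by Fact~\ref{fact_desclem}, the inequality $\gamma_k\lambda_k>2\delta\theta\nu/L$; dividing by $\gamma_k\le\sup_k\gamma_k$ completes \ref{p:20151228ai}.

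For \ref{p:20151228aii} the only thing missing is a single Lipschitz constant for $\nabla f$ valid at all the trial points. Since the hypothesis on $\nabla f$ implies {\bf H2} and $S_*\ne\varnothing$, Theorem~\ref{thm:convergence} gives $x_k\rightharpoonup\bar x\in S_*$, $y_k\rightharpoonup\bar x$ and $\sum_k\norm{x_{k+1}-x_k}^2<+\infty$, so $x_{k+1}-x_k=\lambda_k(y_k-x_k)\to0$. If \ref{LS1} or \ref{LS3} is used then $\inf_k\lambda_k>0$, hence $y_k-x_k\to0$; writing $\tilde y_k=\prox^k_{(\gamma_k/\theta)g}(x_k-(\gamma_k/\theta)\nabla^k f(x_k))$, Lemma~\ref{lem:Jproperties}\ref{lem:Jpropertiesi} in the metric $\scal{\cdot}{\cdot}_k$ and \eqref{eq:norms} give $\norm{\tilde y_k-x_k}\le\theta^{-1}\nu^{-1/2}\norm{y_k-x_k}_k\le\theta^{-1}\sqrt{\mu/\nu}\,\norm{y_k-x_k}\to0$, so $\tilde y_k\rightharpoonup\bar x$, and by Lemma~\ref{l:20160103b} the set $K:=\{\bar x\}\cup\bigcup_k[x_k,\tilde y_k]$ is weakly compact, contained in $\dom g$ (convexity), and contains every $x_k$ and every rejected trial point $J_k(x_k,\gamma_k/\theta,\lambda_k)\in[x_k,\tilde y_k]$. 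If instead \ref{LS2} or \ref{LS4} is used then $\inf_k\gamma_k>0$ is assumed and $K:=\{\bar x\}\cup\bigcup_k[x_k,y_k]$ already works: weakly compact (Lemma~\ref{l:20160103b}), contained in $\dom g$, and containing every $x_k$ and every rejected trial point $J_k(x_k,\gamma_k,\lambda_k/\theta)\in[x_k,y_k]$. On the weakly compact set $K\subset\dom g$, $\nabla f$ is $L$-Lipschitz for some $L$; the estimates of \ref{p:20151228ai} now apply verbatim — Fact~\ref{fact_desclem} and Fact~\ref{p:20160125f} being invoked only along segments lying in $K$ — and yield $\inf_k\gamma_k>0$ (rules \ref{LS1}, \ref{LS3}) or $\inf_k\lambda_k>0$ (rules \ref{LS2}, \ref{LS4}), the complementary parameter being bounded below by hypothesis.

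The step I expect to be delicate is the localization in \ref{p:20151228aii}: the rejected trial points are generated from $x_k$ with a \emph{larger} stepsize or relaxation than the accepted one, and one must certify that they all lie in one fixed weakly compact subset of $\dom g$. This is exactly where Lemma~\ref{lem:Jproperties}\ref{lem:Jpropertiesi} — i.e.\ $\norm{J_k(x_k,\gamma_k,\lambda_k)-x_k}_k\le\norm{J_k(x_k,\gamma_k/\theta,\lambda_k)-x_k}_k\le\theta^{-1}\norm{J_k(x_k,\gamma_k,\lambda_k)-x_k}_k$ — and the a priori positivity of the complementary parameter (used to pass from $x_{k+1}-x_k\to0$ to $y_k-x_k\to0$, resp.\ to keep the fixed $\gamma_k$ bounded) enter; without them the trial points need not converge weakly to $\bar x$ and no uniform Lipschitz constant is available.
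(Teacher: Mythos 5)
Your proposal is correct and follows essentially the paper's own route: in (i) the backtracking bound is obtained by playing the descent lemma (for \ref{LS1}, \ref{LS2}, \ref{LS4}, via the implication \ref{lem:20160129a_b}$\Rightarrow$\ref{lem:20160129a_c}) or Fact~\ref{p:20160125f} (for \ref{LS3}) against the acceptance test on the grid, and in (ii) the weak convergence from Theorem~\ref{thm:convergence}, together with $\norm{x_{k+1}-x_k}\to0$ and Lemma~\ref{lem:Jproperties}\ref{lem:Jpropertiesi}, localizes the rejected trial points in a weakly compact subset of $\dom g$ so that a single Lipschitz constant applies --- which is exactly what the paper does through Lemma~\ref{l:20160103a}\ref{l:20160103aii} (your explicit construction via Lemma~\ref{l:20160103b} just unpacks that lemma). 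The only cosmetic difference is that in (i) you argue via the failed trial at $\gamma_k/\theta$ (resp.\ $\lambda_k/\theta$) while the paper compares $\gamma_k$ with the largest grid value satisfying the Lipschitz-based sufficient condition; these are equivalent and yield the same constants.
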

\begin{proof} We first remark that
in both statements, {\bf H1} and {\bf H2} are fulfilled.

\ref{p:20151228ai}:
First we  consider the case that  
$(x_k)_{k \in \N}$, $(\gamma_k)_{k \in \N}$, and ${(\lambda_k)}_{k \in \N}$
are generated using either \ref{LS1}, \ref{LS2} or \ref{LS4}.
Let $k \in \N$. Since $\nabla f$ is $L$-Lipschitz continuous on $\dom g$,
we derive from the descent lemma (Fact~\ref{fact_desclem}) that
for every $\gamma>0$
\begin{equation}
\label{eq:20160121a}
f(J(x_k,\gamma, \lambda_k)) - f(x_k) - \scal{J(x_k,\gamma, \lambda_k) -x_k}{\nabla f(x_k)}
 \!\leq\! \frac{L}{2} \norm{J_k(x_k,\gamma, \lambda_k) \!-\! x_k}^2
\end{equation}
and for every $\lambda \in \left]0,1\right]$,
\begin{equation}
\label{eq:20160126a}
 f(J_k(x_k,\gamma_k, \lambda)) - f(x_k) 
- \scal{J(x_k,\gamma_k, \lambda) \!-\!x_k}{\nabla f(x_k)}
 \!\leq\! \frac{L}{2} \norm{J_k(x_k,\gamma_k, \lambda) \!-\! x_k}^2\!.
\end{equation}
Moreover, again by \eqref{eq:norms}, 
\begin{equation}
\label{eq:20160204h}
(\forall\, (\gamma,\lambda) \in \RPP\times\left]0,1\right])\quad 
\frac{L}{2} \norm{J_k(x_k,\gamma, \lambda) - x_k}^2 
\leq \frac{L}{2 \nu_k} \norm{J_k(x_k,\gamma, \lambda) - x_k}^2_k.
\end{equation}
Define
\vspace{-1ex}
\begin{align}
\label{eq:20151228f}
\bar{\gamma}_k &= \max\big\{ \gamma \in \RPP \,\vert\, 
(\exists\, i \in \N)(\gamma = \bar{\gamma} \theta^i)\  L 
\leq 2\nu_k\delta/(\gamma \lambda_k) \big\},\\
\label{eq:20160126b}
\bar{\lambda}_k &= \max\big\{ \lambda \in \left]0,1\right] \,\vert\, 
(\exists\, i \in \N)(\lambda = \bar{\lambda} \theta^i)\  L 
\leq 2\delta \nu_k/(\gamma_k\lambda) \big\}.
\end{align}
It follows from \eqref{eq:20160121a}, \eqref{eq:20160204h}, and the definition of $\gamma_k$
in \ref{LS1}
that $\gamma_k \geq \bar{\gamma}_k$.
Moreover, by \eqref{eq:20151228f}, we have that if  $\bar{\gamma}_k < \bar{\gamma}$, 
then 
$L> 2 \delta \theta \nu_k/(\bar{\gamma}_k \lambda_k)$, hence
$\bar{\gamma}_k> 2 \delta \theta \nu_k/(L \lambda_k)$. 
Therefore $\bar{\gamma}_k \geq \min\{\bar{\gamma}, 2\delta \theta \nu/(L \sup_{k \in \N} \lambda_k\}$.
Similarly, it follows from \eqref{eq:20160126a}, \eqref{eq:20160204h},
and  the definition of $\lambda_k$
in \ref{LS2} or in \ref{LS4}, and Proposition~\ref{lem:20160129a}\ref{lem:20160129ai}
(invoked for the metric $\scal{\cdot}{\cdot}_k$),
that $\lambda_k \geq \bar{\lambda}_k$. 
Moreover, by \eqref{eq:20160126b}, we have that if  $\bar{\lambda}_k < \bar{\lambda}$, 
then 
$L> 2 \delta \theta \nu_k/(\gamma_k \bar{\lambda}_k)$, hence
$\bar{\lambda}_k> 2 \delta \theta \nu_k/(L \gamma_k)$. 
Therefore, $\bar{\gamma}_k 
\geq \min\{\bar{\lambda}, 2\delta \theta \nu/(L \sup_{k \in \N} \gamma_k)\}$.\\
We now consider the case 
that $(x_k)_{k \in \N}$, $(\gamma_k)_{k \in \N}$, and ${(\lambda_k)}_{k \in \N}$
are defined according to \ref{LS3}.
Let $k \in \N$.
Then, for every $\gamma>0$,
$\norm{ \nabla f(J_k(x_k,\gamma,\lambda_k)) - \nabla f(x_k)} 
\leq L \norm{J_k(x_k,\gamma,\lambda_k) - x_k}$,
and, by \eqref{eq:norms} and Fact~\ref{p:20160125f},
\begin{equation}
\label{eq:20160121b}
\big\lVert \nabla^k f(J_k(x_k,\gamma)) - \nabla^k f(x_k)\big\rVert_k 
\leq \frac{L}{\nu_k} \norm{J_k(x_k,\gamma) - x_k}_k.
\end{equation}
Now, define
\begin{equation}
\label{eq:20160104c}
\bar{\gamma}_k = \max\big\{ \gamma \in \RPP \,\vert\, 
(\exists\, i \in \N)(\gamma = \bar{\gamma} \theta^i)\  L \leq \nu_k\delta/(\gamma \lambda_k) \big\}.
\end{equation}
It follows from \eqref{eq:20160121b} and 
the definition of $\gamma_k$ in \ref{LS3}
that $\gamma_k \geq \bar{\gamma}_k$.
Moreover, by \eqref{eq:20160104c}, 
if $\bar{\gamma}_k<\bar{\gamma}$, then 
$L> \delta \theta \nu_k/(\bar{\gamma}_k \lambda_k)$,
 hence $\bar{\gamma}_k> \delta \theta \nu_k/(L \lambda_k)$.
 Therefore $\bar{\gamma}_k \geq \min\{\bar{\gamma}, \delta \theta \nu/(L \sup_{k \in \N} \lambda_k)\}$.

\ref{p:20151228aii}:
Assume first that 
$(x_k)_{k \in \N}$, $(\gamma_k)_{k \in \N}$, and ${(\lambda_k)}_{k \in \N}$
are defined according to either \ref{LS1} or \ref{LS3}. Since $S_* \neq \varnothing$,
$\inf_\hh (f+g)>-\infty$. Besides,
we derive from Theorem~\ref{thm:convergence}\ref{thm:convergencei}
that $x_{k} \rightharpoonup \bar{x} \in S^*$.
%It follows from Theorem~\ref{prop:20151220a}\ref{prop:20151220aiiibis} 
%that $\inf_{k \in \N} (f+g)(x_k)>-\infty$.
Let, for every $k \in \N$, $\tilde{\gamma}_k = \gamma_k/\theta$ and 
$\tilde{x}_{k} = J(x_k,\tilde{\gamma}_k, \lambda_k)$. Then, 
by Lemma~\ref{lem:Jproperties}\ref{lem:Jpropertiesi} (applied to each metric $\scal{\cdot}{\cdot}_k$), for every $k \in \N$
\begin{equation}
\label{eq:20160126d}
\norm{J_k(x_k,\tilde{\gamma}_k, \lambda_k) - x_k}_k 
\leq \frac{\tilde{\gamma}_k}{\gamma_k} \norm{J_k(x_k, \gamma_k, \lambda_k) - x_k}_k 
= \frac{1}{\theta}\norm{x_{k+1} - x_k}_k.
\end{equation}
%and  $\norm{x_{k+1} - x_k} = \norm{J(x_k, \gamma_k) - x_k} 
%\leq \norm{J(x_k, \tilde{\gamma}_k ) - x_k}$;
%hence
%\begin{equation}
%\label{eq:20151230b}
%\frac{\norm{x_{k+1} - x_k}}{\gamma_k} 
%\leq \frac{\norm{\tilde{x}_{k} - x_k}}{\gamma_k}
%= \frac{\norm{\tilde{x}_{k} - x_k}}{\theta \tilde{\gamma}_k}.
%\end{equation}
Moreover, according to the definition of $\gamma_k$
 in \ref{LS1} and \ref{LS3}, we have 
 respectively
\begin{equation}
\label{eq:20151230eb}
(\forall\, k \in \N)\ \gamma_k < \bar{\gamma} \Rightarrow\  
f(\tilde{x}_{k}) - f(x_k) - \big\langle \tilde{x}_{k} -x_k\, |\, \nabla^k f(x_k)\big\rangle_k 
> \delta\frac{\norm{\tilde{x}_{k} - x_k}_k^2}{\tilde{\gamma}_k \lambda_k}
\end{equation}
or
 \begin{equation}
\label{eq:20151230ea}
(\forall\, k \in \N)\ \gamma_k < \bar{\gamma} \Rightarrow\  
\big\lVert \nabla^k f(\tilde{x}_{k}) - \nabla^k f(x_k)\big\rVert_k 
> \delta \frac{\norm{\tilde{x}_{k} - x_k}_k}{\tilde{\gamma}_k \lambda_k}.
\end{equation}
Now, by Theorem~\ref{thm:convergence}\ref{thm:convergenceii01}
and \eqref{eq:norms}, 
$\norm{x_{k+1} - x_{k}}_k \to 0$,
hence, by \eqref{eq:20160126d} and \eqref{eq:norms}, $\tilde{x}_{k} - x_k \to 0$. Then,
since $x_{k} \rightharpoonup \bar{x}$,
Lemma~\ref{l:20160103a}\ref{l:20160103aii} yields that  
$\exists\, L>0$ 
such that, $\forall\, k \in \N$,
$\norm{ \nabla f(\tilde{x}_k) - \nabla f(x_k) }  
\leq L  \norm{\tilde{x}_k - x_k}$
and
$f(\tilde{x}_k) - f(x_k) - \scal{\tilde{x}_k - x_k}{\nabla f(x_k)}  \leq L
\norm{\tilde{x}_k - x_k}^2/2$.
The above inequalities, in view of \eqref{eq:norms}, \eqref{eq:20160204b}, 
and Fact~\ref{p:20160125f},
imply
\begin{align}
\label{eq:20151230za}
(\forall\, k \in \N)\quad &\norm{ \nabla f(\tilde{x}_k) - \nabla f(x_k) }  
\leq \frac{L}{\nu_k}  \norm{\tilde{x}_k - x_k}\\
\label{eq:20151230z}
(\forall\, k \in \N)\quad 
&f(\tilde{x}_k) - f(x_k) - \big\langle \tilde{x}_k - x_k\,|\,\nabla^k f(x_k)\big\rangle_k  \leq \frac{L}{2 \nu_k} 
\norm{\tilde{x}_k - x_k}_k^2.
\end{align}
Thus, \eqref{eq:20151230z} and \eqref{eq:20151230eb} yield
that, $\forall\, k \in \N$, $\gamma_k < \bar{\gamma}\ \Rightarrow\ $
$L/(2\nu_k) \geq \delta /(\tilde{\gamma}_k \lambda_k)\ \Rightarrow\ $ 
$\gamma_k = \tilde{\gamma}_k \theta \geq 2\delta \theta \nu_k/(L \lambda_k)$.
Thus, $\forall\, k \in \N$, $\gamma_k 
\geq \min\{\bar{\gamma}, 2\delta \theta \nu/(L \sup_{k \in \N} \lambda_k)\}>0$.
Moreover, it follows from \eqref{eq:20151230za} and \eqref{eq:20151230ea}
that, $\forall\, k \in \N$, $\forall\,\gamma_k<\sigma\ \Rightarrow\ $
$L/\nu_k \geq \delta /(\tilde{\gamma}_k \lambda_k)\ \Rightarrow\ $ 
$\gamma_k = \tilde{\gamma}_k \theta \nu_k \geq \delta \theta/(L \lambda_k)$. 
Thus, $\forall\,k \in \N$, $\gamma_k 
\geq \min\{\bar{\gamma}, \delta \theta \nu/(L \sup_{k \in \N} \lambda_k)\}>0$.\\
The case \ref{LS2} or \ref{LS4} is treated in the same way.
%For the case of \ref{LS2} or \ref{LS4},
%we let, for every $k \in \N$, $\tilde{\lambda}_k = \lambda_k/\theta$ and 
%$\tilde{x}_{k} = J(x_k,\gamma_k, \tilde{\lambda}_k)$. Then, 
%by Lemma~\ref{lem:Jproperties}\ref{lem:Jpropertiesi},
%\begin{equation*}
%(\forall\, k \in \N)\quad \lVert J_k(x_k,\gamma_k, \tilde{\lambda}_k) - x_k \rVert _k
%\leq \frac{\tilde{\gamma}_k}{\gamma_k} \norm{J_k(x_k, \gamma_k, \lambda_k) - x_k}_k 
%= \frac{1}{\theta}\norm{x_{k+1} - x_k}_k.
%\end{equation*}
%Moreover, according to the definition of $\lambda_k$
% in \ref{LS2} and \ref{LS4},  and 
% Proposition~\ref{prop:20151220a}\ref{prop:20151220aibis}, we have 
% in any case that
%\begin{equation*}
%(\forall\, k \in \N)\ \lambda_k < \bar{\lambda} \Rightarrow\  
%f(\tilde{x}_{k}) - f(x_k) - \scal{\tilde{x}_{k} -x_k}{\nabla^k f(x_k)}_k 
%> \delta\frac{\norm{\tilde{x}_{k} - x_k}_k^2}{\gamma_k \tilde{\lambda}_k}
%\end{equation*}
%and the proof proceeds as before.
\end{proof}

\begin{remark}
\normalfont
\label{rmk:20160110a}\
\begin{enumerate}[(i)]
\item\label{rmk:20160110aii}
The results given in Proposition~\ref{p:20151228a}, shows that
when $\nabla f$ has some kind of Lipschitz continuity property,
the stepsizes determined by \ref{LS3} may be half of those determined
by the other line search methods. In particular, 
if $\nabla f$ is  $L$-Lipschitz continuous on $\dom g$ 
 and $(\gamma_k)_{k \in \N}$
are defined according to \ref{LS3},
we can make $\inf_{k \in \N} \gamma_k$ 
(by choosing $\lambda_k \equiv 1$, and $\delta$ and $\theta$ sufficiently 
close to $1$) arbitrarily close to
$\nu/L$; whereas using \ref{LS1}, \ref{LS2}, or \ref{LS4},
 $\inf_{k \in \N} \gamma_k$ can approach
 $2 \nu/L$. This latter result is in line with the state of the art 
convergence theory on 
forward-backward splitting algorithm (with $\nu = 1$) \cite{Bre09,Smms05,Dav2015}.
This suggests that \ref{LS3} is not quite appropriate
for  (VM-FBS).
\item Similar results recently appeared under stronger hypotheses
and for more specific cases
 in \cite{Cruz15} 
and \cite{Bon15}.
In particular in \cite[Proposition~4.4]{Cruz15} (for stationary metrics) it is proved that 
if $f$ is globally Lipschitz continuous and \ref{LS3} is used, 
$\inf_{k \in \N} \gamma_k$ can (only) reach $1/(2L)$. Moreover,
it is also showed that
 $\inf_{k \in \N} \gamma_k>0$,
if $\hh$ is finite-dimensional and $\nabla f$ is locally Lipschitz continuous
around any point of $S_*$. However, if $\hh$ is finite dimensional and
 $\nabla f$ is locally Lipschitz continuous around any point of $\dom g$, 
then $\nabla f$ is Lipschitz continuous on any (weakly) compact
subset of $\dom g$ (recall also 
Remark~\ref{rmk:20151230a}\ref{rmk:20151230ai}). 
Therefore Proposition~\ref{p:20151228a}\ref{p:20151228aii}
encompasses \cite[Proposition~4.4(ii)]{Cruz15}.
On the other hand in \cite{Bon15}
a variable metric projected gradient method is studied
for finite dimensional convex problems
using the corresponding specialization of \ref{LS4}.
In Proposition~2.2 they prove that the $\lambda_k$'s are bounded away from zero
provided that
$\nabla f$ is locally Lipschitz continuous 
and additionally that $f$ is coercive.
\end{enumerate}
\end{remark}

\section{Dealing with general placements of the domains}
\label{sec:gendomains}

This section provides a slight variation of algorithm (VM-FBS)
that can handle more general configurations of the domains of $f$ and $g$. 
This will be done at the expense of additional assumptions. We recall that in {\bf H2} it is required 
that $\dom g \subset \dom f$; however that condition is not always appropriate when
$f$ is taken of  divergence type \cite{Bon12,Sal14}.
Here we replace 
 assumptions {\bf H1} and {\bf H2}  by the following {\bf H1$^\prime$}--{\bf H3$^\prime$}. 
For every $h \colon \hh \to \RX$ and  $\alpha \in \R$,
 we set $\{h \leq \alpha\}=\{ x \in \hh\,\vert\, h(x) \leq \alpha\}$.
 Moreover, we define the distance between subsets $A$ and $B$ of $\hh$
as $d(A,B) = \inf\big\{\norm{x - y}~\vert~x \in A\ \text{and}\ y \in B\big\}$.

\begin{description}[itemsep=0mm]
\item[H1$^\prime$] $f\colon \hh \to \RX$ and $g\colon \hh \to \RX$ 
are proper convex and lower semicontinuous functions, bounded from below and such that
$\dom g \cap \mathrm{int}\dom f \neq \varnothing$;
\item[H2$^\prime$] $f$ is G\^ateaux differentiable on 
$\dom g \cap \mathrm{int}\dom f$, $\nabla f$
is uniformly continuous on any weakly compact subset of 
$\dom g \cap \mathrm{int}\dom f$, and $\nabla f$ is bounded on any
sublevel sets of $f+g$.
\item[H3$^\prime$] %$\dom f \neq \hh$ and, 
for every $x \in \mathrm{int}\dom f \cap \dom g$, 
$\{f+g \leq (f+g)(x)\} \subset \mathrm{int}\dom f \cap \dom g$ and
 $d(\{f+g \leq (f+g)(x)\},\hh\setminus \mathrm{int}\dom f)>0$.
\end{description}

The following result clarifies the role of the above hypotheses.

\begin{proposition}
\label{prop:20160428a}
Suppose that {\bf H1$^\prime$}--{\bf H3$^\prime$}  hold.
Let $x_0 \in
 \mathrm{int}\dom f \cap \dom g$
and  $K_0 = \{f+g \leq (f+g)(x_0)\}$. Then there exist constants
$C_1$ and $C_2$ in $\RP$, such that, 
\begin{equation}
\label{eq:20160427b}
(\forall\, x \in K_0)(\forall\, \gamma \in \RPP)\quad 
\norm{J(x, \gamma,1) - x} 
\leq C_1 \gamma  + C_2 \sqrt{\gamma}.
\end{equation}
Moreover, setting 
$\delta_0 = d(K_0, \hh\setminus\mathrm{int}\dom f)>0$, we have
\begin{equation*}
(\forall\, x \in K_0)(\forall\, \gamma>0)\quad
J(x,\gamma,1) \notin \dom f\ \Rightarrow\ \gamma \geq c_0:= c(\delta_0)>0,
\end{equation*}
where $c(\cdot)$ is the inverse of the strictly incresing function 
$\gamma \mapsto  C_1 \gamma  + C_2\sqrt{ \gamma}$.
\end{proposition}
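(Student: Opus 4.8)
The plan is to bound $\norm{J(x,\gamma,1)-x}$ by peeling off the proximal step and invoking Fact~\ref{lem:prox}, and then to read off the second assertion by a short topological comparison with the estimate just obtained. First I would record the structural consequences of the hypotheses: by {\bf H3$^\prime$}, $K_0=\{f+g\leq(f+g)(x_0)\}$ is contained in $\mathrm{int}\dom f\cap\dom g$, so by {\bf H2$^\prime$} the gradient $\nabla f(x)$ is defined for every $x\in K_0$; and since $g$ is bounded from below by {\bf H1$^\prime$}, one has $0\in\dom g^*$ with $g^*(0)=-\inf_\hh g<+\infty$. Then, for $x\in K_0$ and $\gamma>0$, I would use $J(x,\gamma,1)=\prox_{\gamma g}(x-\gamma\nabla f(x))$ together with the nonexpansiveness of $\prox_{\gamma g}$ and the triangle inequality to get $\norm{J(x,\gamma,1)-x}\leq\gamma\norm{\nabla f(x)}+\norm{\prox_{\gamma g}(x)-x}$, exactly as in~\eqref{eq:20160213a}, and then Fact~\ref{lem:prox} with $u=0$ to bound $\norm{\prox_{\gamma g}(x)-x}^2\leq 2\gamma\big(g(x)-\inf_\hh g\big)$. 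The remaining point is to make these two bounds uniform over $K_0$: for $x\in K_0$ one has $g(x)=(f+g)(x)-f(x)\leq(f+g)(x_0)-\inf_\hh f$, hence $g(x)-\inf_\hh g\leq M:=(f+g)(x_0)-\inf_\hh f-\inf_\hh g\in\RP$; and, $K_0$ being a sublevel set of $f+g$, {\bf H2$^\prime$} furnishes $N\in\RP$ with $\norm{\nabla f(x)}\leq N$ on $K_0$. Combining, $\norm{J(x,\gamma,1)-x}\leq N\gamma+\sqrt{2M}\,\sqrt\gamma$, which is~\eqref{eq:20160427b} with $C_1=N$ and $C_2=\sqrt{2M}$; if $M=0$ I would enlarge $C_2$ to a strictly positive constant (this only strengthens the bound) so that $\gamma\mapsto C_1\gamma+C_2\sqrt\gamma$ is strictly increasing.

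For the ``Moreover'' part I would first note that $\fii\colon\RP\to\RP$, $\fii(\gamma)=C_1\gamma+C_2\sqrt\gamma$, is continuous, strictly increasing, and satisfies $\fii(0)=0$, hence is a bijection of $\RP$ onto $\RP$; let $c=\fii^{-1}$ denote its (continuous, strictly increasing) inverse, and recall $\delta_0=d(K_0,\hh\setminus\mathrm{int}\dom f)>0$ by {\bf H3$^\prime$}. Then, fixing $x\in K_0$ and $\gamma>0$ with $J(x,\gamma,1)\notin\dom f$, I would observe that $\mathrm{int}\dom f\subseteq\dom f$ forces $J(x,\gamma,1)\in\hh\setminus\mathrm{int}\dom f$, so that, since $x\in K_0$, $\norm{J(x,\gamma,1)-x}\geq d(K_0,\hh\setminus\mathrm{int}\dom f)=\delta_0$. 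Combined with~\eqref{eq:20160427b} this gives $\delta_0\leq\fii(\gamma)$, and applying the increasing map $c$ yields $\gamma\geq c(\delta_0)=c_0>0$.

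I do not expect a genuine obstacle: the whole argument is a uniform version, over the sublevel set $K_0$, of Lemma~\ref{lem:Jproperties}\ref{lem:Jpropertiesiib}. The only care needed is in the bookkeeping of the lower bounds --- using that $f$ and $g$ are bounded from below to keep $M$ finite and to ensure $0\in\dom g^*$ --- and in invoking {\bf H2$^\prime$}--{\bf H3$^\prime$} precisely where they are needed: boundedness of $\nabla f$ on the sublevel sets of $f+g$ for the coefficient $C_1$, and the inclusion of those sublevel sets in $\mathrm{int}\dom f$ together with the positivity of their distance to $\hh\setminus\mathrm{int}\dom f$ both for the well-posedness of the estimate and for $\delta_0>0$. (In the degenerate case $\dom f=\hh$ one has $\hh\setminus\mathrm{int}\dom f=\varnothing$, $\delta_0=+\infty$, and the implication is vacuously true.)
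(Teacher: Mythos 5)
Your proof is correct and follows essentially the same route as the paper's: bound $g$ (via the lower bound on $f$) and $\nabla f$ uniformly on the sublevel set $K_0$, apply the prox estimate of Fact~\ref{lem:prox} with $u=0$ (i.e.\ Lemma~\ref{lem:Jproperties}\ref{lem:Jpropertiesiib}) to get \eqref{eq:20160427b}, and then invert the increasing bound against $\delta_0$ for the second assertion. The extra remarks on the degenerate cases ($M=0$, $\dom f=\hh$) are harmless refinements the paper leaves implicit.
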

\begin{proof}
We first prove that $g$ is bounded on $K_0$.
Indeed, let $\vartheta \in \R$ be such $\vartheta \leq f$.
 Then, $\forall\, x \in K_0$, $\vartheta + g(x) \leq (f+g)(x) \leq (f+g)(x_0)$,
 hence $g(x) \leq (f+g)(x_0) - \vartheta$.
Concerning the first part of the statement,
we note that, since $g$ is bounded from below, we have $g^*(0)<+\infty$,
hence in \eqref{eq:20160213c} of Lemma~\ref{lem:Jproperties}\ref{lem:Jpropertiesiib}, 
we can take $u=0$, obtaining $\norm{J(x,\gamma,1) - x} \leq \gamma \norm{\nabla f(x)} 
+ (2 \gamma)^{1/2}\sqrt{g(x) + g^*(0)}$. Since,
by  {\bf H2}$^\prime$,
 $\nabla f$ is bounded on $K_0$ and $g$ is bounded on $K_0$ too, 
 \eqref{eq:20160427b} follows.
Moreover, if $x \in K_0$ and $J(x,\gamma,1) \notin \dom f$, then 
$\delta_0 \leq \norm{J(x,\gamma,1)- x} \leq C_1 \gamma + C_2 \sqrt{\gamma}$
and hence $c(\delta_0) \leq \gamma$.
\end{proof}

\begin{proposition}
\label{prop:20160213a}
Let $\mathcal{G}$ be a real Hilbert space, let 
$A\colon \hh \to \mathcal{G}$ be a non-zero bounded linear operator,
let $h \in \Gamma_0(\mathcal{G})$ and set $f = h \circ A$.
Let $g \in \Gamma_0(\hh)$ and suppose that
$\dom g \cap \mathrm{int}\dom f \neq \varnothing$ and 
$g$ is bounded from below.
Then, the following hold.
\begin{enumerate}[$(i)$]
\item\label{prop:20160213ai}  
$f$ is bounded from above on the sublevel sets of $f+g$.
Moreover, if $f$ is bounded from below, then $g$ is bounded from above
on the sublevel sets of $f+g$.
%\item\label{prop:20160213a0} 
%Suppose that $f+g$ is coercive. Then {\bf H1$^\prime$} holds.
\item\label{prop:20160213aii} 
Suppose that $\dom f \neq \hh$, and that, 
\begin{enumerate}[$(a)$]
\item\label{prop:20160213a_a}
for every $\alpha \in \R$, $\{h \leq \alpha\} \subset \mathrm{int}\dom h$ and 
$d(\{ h \leq \alpha\}, \mathcal{G}\setminus\mathrm{int}\dom h)>0$.
\end{enumerate}
Then  {\bf H3$^\prime$}  holds.
\item\label{prop:20160213aiii} 
Suppose that $\mathcal{G}$ is finite dimensional, $h$ is coercive,
 and $\forall\, \alpha \in \R$, $\{h \leq \alpha\} \subset \mathrm{int}\dom h$.
 Then $f$ is bounded from below and \ref{prop:20160213a_a} is satisfied;
 hence {\bf H1$^\prime$} and {\bf H3$^\prime$} hold. 
\item\label{prop:20160213aiibis} 
In addition to the assumptions in \ref{prop:20160213aiii}, suppose 
that $\dom h = \mathrm{int}\dom h$ and that $\nabla h$
is continuously differentiable on $\dom h$.
Then  {\bf H2$^\prime$} holds.
\end{enumerate}
\end{proposition}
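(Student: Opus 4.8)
The plan is to verify \textbf{H1}$^\prime$--\textbf{H3}$^\prime$ for $f=h\circ A$ by establishing the four items in order, each feeding into the next. \emph{Part (i)} is a direct computation on sublevel sets: with $m=\inf_{\hh}g>-\infty$ finite by hypothesis, for any $\alpha\in\R$ and any $x$ with $(f+g)(x)\le\alpha$ one has $f(x)=(f+g)(x)-g(x)\le\alpha-m$, which is meaningful because $h\in\Gamma_0(\mathcal{G})$ never takes the value $-\infty$ and hence neither does $f$; thus $f$ is bounded above on $\{f+g\le\alpha\}$. If moreover $f\ge\vartheta$ for some $\vartheta\in\R$, the symmetric computation $g(x)=(f+g)(x)-f(x)\le\alpha-\vartheta$ gives the second claim. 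No use of the linear structure of $f$ is needed here.

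For \emph{part (ii)}, fix $x_0\in\mathrm{int}\dom f\cap\dom g$, set $c_0=(f+g)(x_0)$ and $K_0=\{f+g\le c_0\}$. By (i) there is $\beta\in\R$ with $f\le\beta$ on $K_0$, that is $A(K_0)\subset\{h\le\beta\}$, and hypothesis $(a)$ yields $\{h\le\beta\}\subset\mathrm{int}\dom h$ together with $\rho:=d(\{h\le\beta\},\mathcal{G}\setminus\mathrm{int}\dom h)>0$. Since $A\neq0$ we have $\norm{A}>0$ and $A$ is $\norm{A}$-Lipschitz; hence for every $x\in K_0$ the open ball of radius $\rho/\norm{A}$ about $x$ is mapped by $A$ into the set of points within distance $\rho$ of $\{h\le\beta\}$, which is contained in $\mathrm{int}\dom h\subset\dom h$, so that ball lies in $\dom f$. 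This already gives $K_0\subset\mathrm{int}\dom f$, and a routine halving argument (a point within $\rho/(2\norm{A})$ of some $x\in K_0$ still has an open ball about it inside $\dom f$) yields $d(K_0,\hh\setminus\mathrm{int}\dom f)\ge\rho/(2\norm{A})>0$. Finally $g(x)=(f+g)(x)-f(x)<+\infty$ for $x\in K_0$, so $K_0\subset\dom g$; altogether \textbf{H3}$^\prime$ holds. (If $\dom f=\hh$ the set $\hh\setminus\mathrm{int}\dom f$ is empty and \textbf{H3}$^\prime$ is trivial, so the hypothesis $\dom f\neq\hh$ costs nothing.)

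\emph{Parts (iii) and (iv).} For (iii): since $\mathcal{G}$ is finite dimensional and $h$ is coercive and lower semicontinuous, $h$ attains its infimum, so $\inf_{\mathcal{G}}h>-\infty$ and $f=h\circ A$ is bounded below; and for every $\alpha$ the sublevel set $\{h\le\alpha\}$ is closed (lsc) and bounded (coercivity), hence compact, so being contained in the open set $\mathrm{int}\dom h$ it lies at strictly positive distance from the disjoint closed set $\mathcal{G}\setminus\mathrm{int}\dom h$ --- this is exactly hypothesis $(a)$. Then \textbf{H1}$^\prime$ follows ($f$ is proper since $\dom f\supset\dom g\cap\mathrm{int}\dom f\neq\varnothing$, convex and lsc as a composition of $h\in\Gamma_0(\mathcal{G})$ with the continuous linear $A$, and both $f,g$ are bounded below), and \textbf{H3}$^\prime$ follows from part (ii) (or trivially when $\dom f=\hh$). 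For (iv): openness of $\dom h$ makes $\dom f=A^{-1}(\dom h)$ open, so $\mathrm{int}\dom f=\dom f$, and by the chain rule $f$ is Fr\'echet, hence G\^ateaux, differentiable on $\mathrm{int}\dom f$ with $\nabla f(x)=A^*\nabla h(Ax)$. Given a weakly compact $C\subset\dom g\cap\mathrm{int}\dom f$, the image $A(C)$ is compact in $\mathcal{G}$ (because $A$ is weak-to-weak continuous and the weak and norm topologies coincide on the finite-dimensional $\mathcal{G}$) and is contained in $\dom h$; $\nabla h$ is continuous, hence uniformly continuous on the compact $A(C)$ by Heine--Cantor, and composing with the Lipschitz maps $A$ and $A^*$ transfers uniform continuity to $\nabla f$ on $C$. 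Boundedness of $\nabla f$ on a sublevel set $S_\alpha=\{f+g\le\alpha\}$ is analogous: by (i) and (iii), $f$ is bounded on $S_\alpha$, say $f\le\beta$ there, so $A(S_\alpha)\subset\{h\le\beta\}$, which is a compact subset of $\dom h$; $\nabla h$ is bounded on it, and $\norm{\nabla f(x)}=\norm{A^*\nabla h(Ax)}\le\norm{A}\sup_{\{h\le\beta\}}\norm{\nabla h}$. Hence \textbf{H2}$^\prime$ holds.

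The step I expect to be most delicate is the transfer of boundary and domain information from $h$ on $\mathcal{G}$ to $f=h\circ A$ on $\hh$: because $A$ need not be surjective, $\mathrm{int}\dom f$ can be strictly larger than $A^{-1}(\mathrm{int}\dom h)$, so a soft topological argument does not suffice. The remedy is the quantitative hypothesis $(a)$ on the sublevel sets of $h$ combined with the elementary estimate $\norm{Ax-Ax'}\le\norm{A}\,\norm{x-x'}$, which converts a positive distance to the boundary in $\mathcal{G}$ into a positive distance in $\hh$; and in (iv) the openness $\dom h=\mathrm{int}\dom h$ is what forces $\dom f=\mathrm{int}\dom f$ and legitimizes the chain rule on all of $\mathrm{int}\dom f$.
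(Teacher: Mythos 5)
Your proof is correct and follows essentially the same route as the paper's: item (i) by the same sublevel-set inequalities, item (ii) by transferring the positive distance for $h$ through the Lipschitz estimate $\norm{Ax-Ax'}\le\norm{A}\,\norm{x-x'}$ (your ball-plus-halving variant only costs a harmless factor $2$ compared with the paper's direct two-point argument via the open set $A^{-1}(\mathrm{int}\dom h)\subset\mathrm{int}\dom f$), and items (iii)--(iv) by compactness of the sublevel sets of $h$ together with Heine--Cantor and the chain rule $\nabla f=A^*\circ\nabla h\circ A$. The only deviations are cosmetic and welcome: you bound $f$ from below via the attained minimum of $h$ itself rather than of $h+\iota_{R(A)}$ as the paper does, and you explicitly cover the trivial case $\dom f=\hh$ when deducing {\bf H3$^\prime$} in (iii), a case the paper's appeal to (ii) silently skips.
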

\begin{proof}
\ref{prop:20160213ai}:
Let $\alpha \in \R$ and let
 $\beta \in \R$ be such that 
and $\beta \leq g$.
Then, for every $x \in \{f+g \leq \alpha\}$, $f(x) + \beta\leq (f + g)(x) \leq \alpha$,
 hence $f(x) \leq \alpha- \beta$.
 As regards the second part, let $\vartheta \in \R$ be such $\vartheta \leq f$.
 Then, for every $x \in \{f+g \leq \alpha\}$, $\vartheta + g(x) \leq (f+g)(x) \leq \alpha$,
 hence $g(x) \leq \alpha - \vartheta$.

%\ref{prop:20160213a0}:
%Let $\alpha \in \R$ and let $K = \{f+g \leq \alpha\}$. Since $f$ and $g$ 
%are weakly lower semicontinuous
%on the weakly compact $K$, $f$ and $g$ have minimizers and hence 
%are bounded from below on $K$.

\ref{prop:20160213aii}:
Let $x \in \mathrm{int}\dom f \cap \dom g$
and set $K= \{ f+g \leq (f+g)(x)\}$ and $U = \mathrm{int}\dom h$. 
It follows from \ref{prop:20160213ai} that $\exists\,\alpha \in \R$
such that, $f \leq \alpha$ on $K$.
Then  $K \subset \{ f \leq \alpha \} \subset A^{-1}(\{h \leq \alpha\}) \subset A^{-1} (U)$, 
which is open (and contained in $\dom f$),
hence $\{ f \leq \alpha \} \subset A^{-1} (U) \subset \mathrm{int}\dom f$.
Let $\delta = d(\{h \leq \alpha \}, \mathcal{G}\setminus U)>0$.
Now, let $x^\prime \in \{f \leq \alpha\}$ and $x^{\prime\prime} 
\in \hh \setminus \mathrm{int}\dom f$.
Then $Ax^\prime \in \{h \leq \alpha\}$ and $A x^{\prime\prime} \notin U$,
hence $\delta \leq \norm{A x^\prime- A x^{\prime\prime}} 
\leq \norm{A} \norm{x^\prime - x^{\prime\prime}}$.
Thus, $d(K,\hh\setminus\mathrm{int}\dom f)  \geq d(\{f \leq \alpha\}, \hh\setminus\mathrm{int}\dom f)\geq \delta/\norm{A}>0$.

\ref{prop:20160213aiii}:
Since  the range of $A$, $R(A)$, is closed in $\mathcal{G}$, $h + \iota_{R(A)}$ is lower semicontinuous and coercive,
hence it has a minimizer, say $\bar{y} \in R(A)$. Then, taking $\bar{x} \in \hh$ such that
$A \bar{x} = \bar{y}$, we have, for every $x \in \hh$, $h(A \bar{x}) \leq h(A x) = f(x)$.
Thus, $f$ is bounded from below.
Let $\alpha \in \R$. Since $\{h \leq \alpha\}$ is compact and 
$d(\cdot, \mathcal{G}\setminus \mathrm{int}\dom h)$ is continuous
and strictly positive on $\{h \leq \alpha\}$, 
then $d(\{h \leq \alpha\}, \mathcal{G}\setminus \mathrm{int}\dom h)>0$.

\ref{prop:20160213aiibis}:
Clearly $\dom f = A^{-1}(\dom h) = \mathrm{int}\dom f$ and $f$ is continuously
differentiable on $\dom f$ and, for every $x \in \dom f$, $\nabla f (x) = A^* \nabla h(A x)$.
Let $\alpha \in \R$. 
It follows from \ref{prop:20160213ai} that there exists $\beta \in \R$ such that
$f \leq \beta$ on $\{ f+g \leq \alpha\}$.
Then, for every $x \in \{ f+g \leq \alpha\}$,
$h(A x) \leq \beta$, therefore $A x \in \{h \leq \beta\}$. 
Since $\nabla h$ is continuous on the compact $\{h \leq \beta\}$, 
there exists $\eta \in \RP$ such that
$\norm{\nabla h} \leq \eta$ on $\{h \leq \beta\}$. Thus 
$\norm{\nabla f (x)} \leq \norm{A^*} \norm{\nabla h(A x)} \leq \norm{A^*} \eta$.
This proves that $\nabla f$ is bounded on the sublevel sets of $f+g$.
Let $K$ be a weakly compact subset of 
$\mathrm{int}\dom f = A^{-1} (\mathrm{int}\dom h)$. Since $A$ is weak-to-weak
continuous, $A(K)$ is a (weakly) compact subset of $\mathrm{int}\dom h$.
Thus, by the Heine-Cantor theorem, $\nabla h$ is uniformly continuous on $A(K)$.
Hence $\nabla f = A^* \circ \nabla h \circ A_{\lvert \dom f}$ is uniformly continuous on $K$.
\end{proof}

\begin{example}
\label{ex:bregman}
\normalfont
Let $n \in \N$, $n \geq 1$.
Let $\varphi$ be a Legendre function on $\R^n$ \cite{Bau06}
which is twice continuously differentiable on $\mathrm{int} \dom \varphi$, 
and let
\begin{equation}
\label{eq:bregman}
D\colon \R^n \times \R^n \to [0,+\infty]
\colon (x,y) \mapsto 
\begin{cases}
\varphi(x) - \varphi(y) - \scal{\nabla \varphi(y)}{x-y}& \text{if}\ y \in \mathrm{int}\dom \varphi\\
+\infty&\text{otherwise}
\end{cases}
\end{equation}
be the associated Bregman distance. 
Suppose that, for every $x \in \mathrm{int}\dom \varphi$, $D(x,\cdot) \in \Gamma_0(\R^n)$
and $D(x,\cdot)$ is coercive
--- this case is studied in \cite{Bau06} (see in particular Lemma~2.6) and 
occurs, e.g., for the Kullback-Leibler divergence, where 
$\varphi(x) = \sum_{i=1}^n x_i \log x_i - x_i$.
Let $b \in \mathrm{int}\dom \varphi$, let $A\colon \hh \to \R^n$ be 
a bounded linear operator, and set $f = D(b,A\cdot)$. 
Thus, in virtue of 
Proposition~\ref{prop:20160213a}\ref{prop:20160213aiii}-\ref{prop:20160213aiibis},
if $g \in \Gamma_0(\hh)$ is bounded from below and such that $A^{-1}(\mathrm{int}\dom \varphi) \cap \dom g \neq \varnothing$,
assumptions {\bf H1$^\prime$}--{\bf H3$^\prime$} are satisfied.

\end{example}

Under assumptions {\bf H1$^\prime$}--{\bf H3$^\prime$},
we can modify algorithm (VB-FBS), by adding a further
line search for computing $y_k$.
More precisely, for \ref{LS2} and \ref{LS4}, the sequence 
$(\gamma_k)_{k \in \N}$ cannot be chosen a priory anymore,
but it has to be computed by the following procedure.
Let $\overline{\gamma}>0$, let $x_0 \in \mathrm{int}\dom f \cap \dom g$,
and set $K_0 = \{f+g \leq (f+g)(x_0)\}$.
Then, for every $k \in \N$, assume that $x_k \in K_0$, and compute
%\begin{enumerate}
%\item\label{LSD} 
%\end{enumerate}
\begin{equation}
\label{eq:20160213h}
\gamma_k 
= \max \Big\{ \gamma \in \RPP~\big\vert~(\exists i \in \N)(\gamma = \bar{\gamma}\theta^i) 
\quad J(x_k, \gamma,1) \in \dom f\Big\}.
\end{equation}
Note that, since $x_k \in \mathrm{int}\dom f$ and $J(x_k,\gamma,1)\to x_k$ 
as $\gamma \to 0$, the procedure \eqref{eq:20160213h}
is well-defined.
Moreover, because of 
Proposition~\ref{prop:20160428a},
if $\gamma_k \leq \bar{\gamma}\theta$, then 
$J(x_k, \gamma_k/\theta,1) \notin \dom f$ and hence $\gamma_k\geq c_0\theta$.
Therefore $\inf_{k} \gamma_k>0$.
Procedure \eqref{eq:20160213h} ensures that
$y_k \in \dom f \cap \dom g$ and the subsequent line search makes sense. Moreover,  Proposition~\ref{lem:20160129a} yields that 
$(f+g)(x_{k+1}) \leq (f+g)(x_k)$, hence $x_{k+1} \in K_0$ and the algorithm can continue.
Concerning \ref{LS1} and \ref{LS3},
the $\gamma_k$ computed by \eqref{eq:20160213h}
is meant to replace $\bar{\gamma}$ in \ref{LS1} and \ref{LS3},
meaning that, for every $k \in \N$, 
they will do backtracking on  $\gamma$ starting from
the output of \eqref{eq:20160213h}.
This will make sense of the subsequent procedures \ref{LS1} and \ref{LS3}.
Again Proposition~\ref{lem:20160129a} proves that the next step is  descendent
and hence $x_{k+1} \in K_0$.
Note that in \ref{LS1}, if ${(\lambda_k)}_{k \in \N} \equiv 1$, then one can
perform \ref{LS1} only, since it will automatically search for a point in $\dom f$.

\section{Applications}
\label{sec:app}

In this section we  illustrate several models that
can be tackled by the proposed algorithm.
In particular we show that its scope of applicability encompasses 
problems that involve
Banach spaces or functions of divergence type.
To that purpose we recall few facts.
\begin{fact}
\label{prop:chainrule}
Let  $A\colon \hh \to \mathcal{B}$ be a bounded linear operator
between a real Hilbert space and a real Banach space.
Let $\varphi\colon \mathcal{B} \to \R$ be a differentiable 
function and suppose that its derivative $\varphi^\prime$ is 
$\alpha$-H\"older continuous on bounded sets,
for some $\alpha \in \left]0,1\right]$,
that is, for every bounded set $Y \subset \B$,
there exists $C \in \RP$ such that
\begin{equation*}
(\forall\,(y_1, y_2) \in Y^2)\qquad 
\norm{\varphi^\prime (y_1) - \varphi^\prime(y_2)}_{\B^*} \leq C \norm{y_1 - y_2}_{\B}^\alpha.
\end{equation*}
Then, $f = \varphi \circ A \colon\hh \to \R$ is 
differentiable and
%\begin{equation*}
%(\forall\, x \in \hh) \qquad \nabla f(x) = A^*(\varphi^\prime( A x ))
%\end{equation*}
%that is 
$ \nabla f = A^* \circ \varphi^\prime \circ A$, 
where $A^* \colon \B^* \to \hh$ is the adjoint of $A$.
Moreover, $\nabla f$ is $\alpha$-H\"older continuous on the bounded sets of $\hh$.
\end{fact}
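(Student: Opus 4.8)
The plan is to prove the three assertions --- differentiability of $f$, the formula $\nabla f = A^* \circ \varphi'\circ A$, and $\alpha$-H\"older continuity of $\nabla f$ on bounded sets --- in order, the whole argument being essentially the chain rule together with a direct estimate; the only slightly delicate point is upgrading the hypothesis on $\varphi$ to a uniform first-order Taylor bound that survives composition with $A$.

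First I would record the Taylor estimate for $\varphi$ that the H\"older hypothesis yields. Fix a bounded convex set $Y\subset\B$ and let $C\in\RP$ be the associated H\"older constant. For $y$ and $y+h$ in $Y$, applying the fundamental theorem of calculus to $t\mapsto\varphi(y+th)$ gives
\[
\varphi(y+h) - \varphi(y) - \pair{\varphi'(y)}{h} = \int_0^1 \pair{\varphi'(y+th) - \varphi'(y)}{h}\,dt ,
\]
and since the integrand is bounded in absolute value by $C\norm{th}_\B^{\alpha}\norm{h}_\B = C\,t^{\alpha}\norm{h}_\B^{1+\alpha}$, we obtain $\bigl|\varphi(y+h) - \varphi(y) - \pair{\varphi'(y)}{h}\bigr| \le \tfrac{C}{1+\alpha}\norm{h}_\B^{1+\alpha} = o(\norm{h}_\B)$. (If $\varphi$ is assumed only G\^ateaux differentiable a priori, this step is exactly what promotes it to Fr\'echet differentiability; if it is already Fr\'echet, the estimate is just made explicit.)

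Second, with this in hand I would establish differentiability of $f$ at an arbitrary $x\in\hh$. For $d\in\hh$ write $f(x+d) - f(x) = \varphi(Ax + Ad) - \varphi(Ax)$ and apply the previous estimate with $y = Ax$, $h = Ad$ (for $\norm{d}_\hh$ bounded, $y$ and $y+h$ lie in a fixed bounded set, so the same $C$ applies):
\[
f(x+d) - f(x) - \pair{\varphi'(Ax)}{Ad} = o\bigl(\norm{Ad}_\B\bigr) = o\bigl(\norm{d}_\hh\bigr),
\]
the last equality because $\norm{Ad}_\B \le \norm{A}\,\norm{d}_\hh$. Since $\pair{\varphi'(Ax)}{Ad} = \scal{A^{*}\varphi'(Ax)}{d}$ by the definition of the adjoint $A^{*}\colon\B^{*}\to\hh$ (identifying $\hh^{*}$ with $\hh$), this shows $f$ is Fr\'echet --- hence G\^ateaux --- differentiable at $x$ with $\nabla f(x) = A^{*}\varphi'(Ax)$, i.e. $\nabla f = A^{*}\circ\varphi'\circ A$.

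Third, for the regularity of $\nabla f$: given a bounded set $X\subset\hh$, its image $A(X)$ is bounded in $\B$, so there is $C\in\RP$ with $\norm{\varphi'(y_1) - \varphi'(y_2)}_{\B^{*}} \le C\norm{y_1 - y_2}_\B^{\alpha}$ for all $y_1,y_2\in A(X)$; then for $x_1,x_2\in X$,
\[
\norm{\nabla f(x_1) - \nabla f(x_2)} = \norm{A^{*}\bigl(\varphi'(Ax_1) - \varphi'(Ax_2)\bigr)} \le \norm{A^{*}}\,C\,\norm{Ax_1 - Ax_2}_\B^{\alpha} \le C\norm{A}^{1+\alpha}\,\norm{x_1 - x_2}^{\alpha},
\]
using $\norm{A^{*}} = \norm{A}$, which is the claimed $\alpha$-H\"older continuity on bounded sets. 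The argument is bookkeeping throughout; the points that need a little care are that the Taylor estimate from the first step be uniform on bounded sets (so it is stable under the bounded perturbation $d\mapsto Ad$) and that $\hh^{*}$ be correctly identified with $\hh$ so that $A^{*}\colon\B^{*}\to\hh$, rather than $A^{*}\colon\B^{*}\to\hh^{*}$, produces a genuine gradient vector.
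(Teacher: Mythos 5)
Your proof is correct: the paper states this result as a Fact without giving a proof, and your argument (the fundamental-theorem-of-calculus Taylor bound on bounded convex sets, the chain rule via $\scal{A^*\varphi'(Ax)}{d} = \pair{\varphi'(Ax)}{Ad}$, and the estimate $\norm{A^*}\,C\,\norm{A}^{\alpha}$ for the H\"older constant) is exactly the standard argument the paper implicitly relies on. Your attention to the two delicate points — uniformity of the Taylor estimate on bounded sets and the Riesz identification making $A^*\colon \B^*\to\hh$ — is precisely what makes the sketch complete, so nothing is missing.
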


The following result is in \cite[Corollary~2.44 and Theorem~2.53(f)]{Sch2012}

\begin{fact}
\label{f:JpHolder}
Let $\B$ be a uniformly smooth Banach space, let 
$p \in \left]1,+\infty\right[$ and set $\varphi = (1/p) \norm{\cdot}^p_{B}$.
Then $\varphi$ is  
differentiable and $ \varphi^\prime = J_{\B,p}$ is the $p$-duality mapping of $\B$, 
which is uniformly continuous on bounded sets.
Moreover, if $\B$ has modulus of smoothness of power type $q \in \left]1,2\right]$,
then, $\varphi^\prime$ is $(p-1)$-H\"older continuous, if $p \leq q$,
and $(q-1)$-H\"older continuous on bounded sets, if $p>q$.
\end{fact}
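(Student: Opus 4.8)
The plan is to prove Fact~\ref{f:JpHolder} in two stages: first the differentiability of $\varphi = (1/p)\norm{\cdot}^p_\B$ with identification of $\varphi' = J_{\B,p}$ and its uniform continuity on bounded sets, then the H\"older estimates under a power-type modulus of smoothness assumption. Since the statement is attributed to \cite[Corollary~2.44 and Theorem~2.53(f)]{Sch2012}, the cleanest route is to cite those results directly and merely assemble the pieces; but I would also sketch the underlying argument so the proof is self-contained up to standard Banach space geometry.

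First I would recall that in a uniformly smooth Banach space the norm is Fr\'echet differentiable away from the origin, with derivative the normalized duality map $J_{\B} = J_{\B,2}/\norm{\cdot}$ (suitably scaled), and that uniform smoothness is equivalent to the uniform continuity of $x \mapsto \norm{x}'$ on the unit sphere. From the chain rule applied to $t \mapsto t^p/p$ composed with $x \mapsto \norm{x}$, one gets that $\varphi$ is differentiable on $\B \setminus \{0\}$ with $\varphi'(x) = \norm{x}^{p-1}\cdot(\text{norm derivative at }x)$, which is exactly the $p$-duality mapping $J_{\B,p}(x)$ (characterized by $\pair{x}{J_{\B,p}(x)} = \norm{x}^p$ and $\norm{J_{\B,p}(x)}_{\B^*} = \norm{x}^{p-1}$). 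At $x=0$ one checks $\varphi'(0)=0$ directly from $\varphi(h)/\norm{h} = \norm{h}^{p-1}/p \to 0$. Uniform continuity of $J_{\B,p}$ on bounded sets then follows by writing $J_{\B,p}(x) = \norm{x}^{p-1} J_{\B,1}(x)$ (for $x \neq 0$), combining the uniform continuity of the scalar factor $\norm{x}^{p-1}$ on bounded sets with the uniform continuity of the normalized duality map $J_{\B,1}$ on bounded sets away from the origin (which is the content of uniform smoothness), and handling the behavior near $0$ separately using the bound $\norm{J_{\B,p}(x)}_{\B^*} = \norm{x}^{p-1}\to 0$.

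For the H\"older part, I would invoke the quantitative form of uniform smoothness: if $\B$ has modulus of smoothness $\rho_\B(\tau) \le C\tau^q$ for some $q \in \left]1,2\right]$, then the normalized duality map $J_{\B,1}$ (equivalently $J_{\B,q}$ after rescaling) is $(q-1)$-H\"older continuous on bounded sets — this is the standard dual statement to the fact that in a space with modulus of convexity of power type $q'$ the duality map is H\"older, see \cite[Theorem~2.53]{Sch2012}. Then, to pass from $J_{\B,1}$ to $J_{\B,p}$ I would use the factorization $J_{\B,p} = \norm{\cdot}^{p-1} J_{\B,1}$ and estimate, on a bounded set $Y$, using that $t \mapsto t^{p-1}$ is Lipschitz on bounded intervals when $p\ge 2$ and $(p-1)$-H\"older when $1<p\le 2$: for $p \le q$ the limiting exponent is $p-1$ (coming from the scalar factor), while for $p > q$ the limiting exponent is $q-1$ (coming from the duality map). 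The arithmetic here is just the elementary inequality $|a^{p-1}u - b^{p-1}v| \le |a^{p-1}-b^{p-1}|\,\norm{u} + b^{p-1}\norm{u-v}$ together with the H\"older bounds for each term; taking the minimum of the available exponents gives the stated result. The main obstacle — though it is conceptual rather than technical — is correctly tracking which exponent dominates in the two regimes $p \le q$ versus $p > q$, and ensuring the H\"older exponent of a product $\norm{\cdot}^{p-1} \cdot J_{\B,1}(\cdot)$ is the minimum of the two factors' exponents on a bounded set; since everything happens on bounded sets this is routine once the two building blocks (H\"older continuity of $J_{\B,1}$ and of the scalar power) are in place, so I would keep the write-up short and lean on the cited results from \cite{Sch2012}.
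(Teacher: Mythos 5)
The paper gives no proof of this Fact at all---it is stated as a quotation of \cite[Corollary~2.44 and Theorem~2.53(f)]{Sch2012}---so your primary route of simply citing and assembling those results is exactly the paper's approach, and your proposal is acceptable on that basis. One caution about your supplementary sketch: the normalized duality map $J_{\B,1}$ is uniformly continuous, and $(q-1)$-H\"older, only on bounded sets \emph{bounded away from the origin} (it is discontinuous at $0$), so in the H\"older part the factorization $J_{\B,p}=\norm{\cdot}_{\B}^{p-1}J_{\B,1}$ requires the same separate near-zero treatment you already used for uniform continuity (e.g.\ taking $\norm{y}_\B\leq\norm{x}_\B$ and absorbing the factor $\norm{y}_\B^{p-1}\norm{x}_\B^{1-q}$), rather than a blanket "H\"older on bounded sets" claim for $J_{\B,1}$.
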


In the following we give a prominent example in which
the duality map of the involved Banach space is explicitly computable.

\begin{remark} 
\label{rmk:20160502a}
\normalfont
Let $(\Omega, \mathfrak{A}, \mu)$ be a $\sigma$-finite measure space
and let $p \in \left]1,+\infty\right[$.
%\begin{enumerate}[(i)]
%\item 
Then 
$L^p(\Omega, \mu)$ is uniformly smooth with modulus of smoothness
of power type $\min\{2,p\}$ \cite{LindTzaf1979}. Therefore,
it follows from Fact~\ref{f:JpHolder} that
the function $\varphi = (1/p) \norm{\cdot}^p_p$
is differentiable and $\varphi^\prime$
is $(p-1)$-H\"older continuous on $L^p(\Omega, \mu)$, if $p \leq 2$,
and Lipschitz continuous on the bounded sets of $L^p(\Omega, \mu)$, if $p>2$.
Moreover, for every $x \in L^p(\Omega, \mu)$, $\varphi^\prime(x) \in L^{p^*}(\Omega,\mu)$
and
$
\varphi^\prime(x)\colon \Omega \to \R\colon \omega \mapsto \abs{x(\omega)}^{p-1} 
\mathrm{sign}(x(\omega))
$.
%\item
%If $\mu$  is a finite measure and $p \leq 2$, then
%$L^2(\Omega,\mu) \hookrightarrow L^p(\Omega,\mu)$.
%In such case, one can obtain a bounded linear operator $A \colon \hh \to L^p(\Omega,\mu)$,
%starting from a bounded linear operator between Hilbert spaces
%$\hh \to L^2(\Omega,\mu)$ and composing it with the embedding $L^2(\Omega,\mu) \hookrightarrow L^p(\Omega,\mu)$.
%\item
%If $\KK$ is a countable set, then
%$\displaystyle\ell^2(\KK) \hookrightarrow \ell^p(\KK)$,
%for $p \geq 2$. In this case, starting from an operator
%$\hh \to \ell^2(\KK)$ one obtains an operator $\displaystyle A\colon \hh \to \ell^p(\KK)$.
%\end{enumerate}
\end{remark}

It follows from Fact~\ref{f:JpHolder} and Fact~\ref{prop:chainrule} that the following general optimization
problem is of the form (P) and hypotheses {\bf H1} and {\bf H2} are satisfied.
\smallskip
\begin{problem}
\label{prob:20160203a}
Let $A\colon \hh \to \B$
be a bounded linear operator between a real Hilbert space and a 
real uniformly smooth Banach space. Let
 $g \in \Gamma_0(\hh)$ and $b \in \B$.
Then
\begin{equation*}
\minimize{x \in \hh}{\frac{1}{p}\norm{A x - b}_\B^p + g(x)}
\qquad(p>1).
\end{equation*}
\end{problem}

\begin{remark}
\normalfont
In Problem~\ref{prob:20160203a},
 we have $\nabla \big((1/p)\norm{A \cdot - b}_\B^p\big)(x) = A^*J_{\B,p}(A x - b)$.
In this case the gradient descent step in (VM-FBS)  
resembles the Landweber step in Banach spaces \cite{Bre09,Sch2012}.
\end{remark}

Based on Remark~\ref{rmk:20160502a}, we give some
significant instances of Problem~\ref{prob:20160203a}.

\begin{example}
\normalfont
Let $\hh$ be a real Hilbert space and let ${(e_k)}_{k \in \KK} \in \hh^{\KK}$
be an orthonormal basis of $\hh$.
Let $(\Omega, \mathfrak{A}, \mu)$ be a $\sigma$-finite measure space
and let $p \in \left]1,+\infty\right[$. Let $A \colon \hh \to L^p(\Omega,\mu)$
be a bounded linear operator, let $b \in L^p(\Omega,\mu)$, and let $(g_k)_{k \in \N}$
be a sequence of functions in $\Gamma_0(\R)$ such that
$g_k \geq g_k(0) = 0$, for every $k \in \N$.
Then
\begin{equation}
\label{eq:20160302f}
\minimize{x \in \hh}{\frac{1}{p}\norm{A x - b}_p^p + \sum_{k \in \N} g_k (\scal{e_k}{x})}.
\end{equation}
Denoting by $f$ and $g$ respectively the first and second term 
in \eqref{eq:20160302f}, we have $\nabla f(x) = A^* u$, where
$
u\colon\Omega \to \R\colon \omega \mapsto 
\abs{(Ax)(\omega) - y(\omega)}^{p-1} \mathrm{sign}\big((Ax)(\omega) - b(\omega)\big)
$,
and the proximity operator can be computed component-wise \cite{Smms05}, that is
\begin{equation*}
\prox_{\gamma g}(x) = {\big( \prox_{\gamma g_k} (\scal{e_k}{x}) \big)}_{k \in \N}.
\end{equation*}
Moreover, it follows from Remark~\ref{rmk:20160502a} that $\nabla f$
is $(p-1)$-H\"older continuous if $p\leq 2$ and Lipschitz continuous on bounded sets if $p>2$.
Therefore, it follows from Theorem~\ref{thm:convergence}
and Proposition~\ref{p:20151228a}\ref{p:20151228aii}
that, in this case, the sequence generated by (VM-FBS) is weakly convergent
to a solution of \eqref{eq:20160302f} and converges in function values to the corresponding minimum. Moreover, if $p \geq 2$,  the convergence in function values boasts a rate of $o(1/k)$.
This example covers the class of problems approached by the iterative 
shrinkage/thresholding algorithm 
\cite{Dau04,Tebu09},
but here a more general discrepancy term is used.
A special case of \eqref{eq:20160302f} is
\begin{equation}
\label{eq:20160204a}
\min_{x \in \ell^2(\KK)} \frac{1}{p}\norm{A x - b}_p^p + \norm{x}_1,
\end{equation}
where, $\KK$ is a countable set, 
$A \colon \ell^2(\KK) \to \R^n$  is a bounded linear operator, and $b \in \R^n$.
Note that here $\dom \norm{\cdot}_1 = \ell^1(\KK)$, which is not closed in $\ell^2(\KK)$.
We highlight that  problems of type \eqref{eq:20160204a}
arise in function interpolation (from discrete data) and non parametric function estimation
(support vector regression).
\end{example}

We end the section by showing a prototype of problems where $\dom g \not\subset \dom f$
and hypotheses {\bf H1$^\prime$}--{\bf H3$^\prime$} in Section~\ref{sec:gendomains}
are met (recall Example~\ref{ex:bregman}).

\begin{problem}
\label{prob:20160407a}
Let $\hh$ be a real Hilbert space, let $n \in \N$ with $n \geq 1$, and let $\varphi$ be a Legendre function
on $\R^n$
such that $\varphi$ is twice continuously differentiable on $\mathrm{int}\dom \varphi$ and
its associated Bregman distance $D$ $(\text{see \eqref{eq:bregman}})$ satisfies the 
condition: $\forall\, z \in \mathrm{int}\dom \varphi$, $D(z,\cdot) \in \Gamma_0(\R^n)$ 
and $D(z,\cdot)$ is coercive. Let  $A\colon \hh \to \R^n$
be a bounded linear operator, let $b \in \mathrm{int}\dom \varphi$, and let
 $g \in \Gamma_0(\hh)$ be such that 
 $A^{-1}(\mathrm{int}\dom \varphi) \cap \dom g \neq \varnothing$
 and $g$ is bounded from below.
Then
\begin{equation*}
\minimize{x \in \hh}{D(b,Ax) + g(x)}.
\end{equation*}
\end{problem}

Examples of Problem~\ref{prob:20160407a} are provided in the following.

\begin{example}
\normalfont
Let $n \in \N$ with $n \geq 1$ and let 
$D(z,y) = \sum_{i=1}^n z_i \log(z_i/y_i) + y_i - z_i$ be
the Kullback-Leibler divergence. Then, let $A \in \R_+^{n \times n}$, $b \in \RPP^n$ and solve
\begin{equation}
\label{eq:20160411a}
\min_{x \in \R^n_+} D(b,Ax) + TV(x).
\end{equation}
where $TV$ is the (discrete) total variation.
Note that in this case $g = \iota_{\R^n_+} + TV$.
Moreover $\dom g \not\subset \dom D(b,\cdot)\circ A$
and  $\nabla (D(b,\cdot)\circ A)$ is only locally Lipschitz continuous on its domain. 
Then, it follows from Theorem~\ref{thm:convergence}
and Proposition~\ref{p:20151228a}\ref{p:20151228aii} that (VM-FBS)
(with the additional line search presented in Section~\ref{sec:gendomains}) provides a sequence which converges to a solution of \eqref{eq:20160411a}
and converges in functional values to the related minimum at rate $o(1/k)$.
Problem \eqref{eq:20160411a} 
is of the type considered in \cite[Section~4.2]{Bon12},
but here the introduction of the background signal is avoided
--- provided that the sought signal $x^*$ satisfies $A x^*>0$.
Another instance of Problem \ref{prob:20160407a}
%in which the gradient of the smooth part is not Lipschitz continuous is the following one
is
\begin{equation*}
\min_{\substack{x \in \R^n_+ \\ \beta \in \R_+}} D(b,Ax + \beta \mathbf{1}) + \norm{x}_1,
\end{equation*} 
where the signal and the background are sought
and $\mathbf{1}$ is the vector of $\R^n$ of all ones.
Here again the domain of the map $(x,\beta) \mapsto D(b,Ax + \beta \mathbf{1})$ is not contained
in $\RP^n \times \RP$.

%In this case $g(x,b) = \iota_{\R^{n+1}_+}(x,b) + \norm{x}_1$ and
%$\prox_g$
%can be computed explicitly. 
\end{example}

%\section{Conclusion} 
%Many other style suggestions and tips
%could be given to help authors but are beyond the scope of this
%document. Simple mistakes can be avoided by increasing your familiarity
%with how \LaTeX\ functions. 
%The books referred to throughout this document
%are also useful to the author who wants clear, beautiful typography
%with minimal mistakes. 


\begin{thebibliography}{15}

\bibitem{Att13} 
H.~Attouch, J.~Bolte, and B.F.~Svaiter,
Convergence of descent methods for semi-algebraic and tame problems: proximal algorithms, forward-backward splitting, and regularized Gauss-Seidel method.
{\em Math. Program.}, 137:~1-2, 91--129. 2013. 

\bibitem{Livre1} 
H.~H.~Bauschke and P.~L.~Combettes,
{\em Convex Analysis and Monotone Operator Theory in Hilbert 
Spaces.}
Springer, New York 2011.

\bibitem{Bau10} 
H.~H.~Bauschke and P.~L.~Combettes,
The Baillon-Haddad theorem revisited.
{\em J. Convex Anal.}, 17:~3\&4, 781--787. 2010. 

\bibitem{Bau06} 
H.~H.~Bauschke, P.~L.~Combettes and D.~Noll,
Joint minimization with alternating Bregman proximity operators. 
{\em Pac. J. Optim.}, 2, 401--424, 2006.

\bibitem{Tebu09} 
A.~Beck and M.~Teboulle,
A fast iterative shrinkage-thresholding algorithm for linear inverse problems.
{\em SIAM J. Imaging Sciences}, 2:~1, 183--202. 2009. 


\bibitem{Cruz15} 
J.~Y.~ Bello~Cruz and T.~T.~A. Nghia,
On the convergence of the proximal forward-backward 
splitting method with linesearch.
arXiv:1501.02501v3, 2015.

\bibitem{Ber76} 
D.~P.~Bertsekas,
On the Goldstein-Levitin-Polyak gradient projection method.
{\em IEEE Transactions on Automatic Control}, AC-21:~2, 174--184. 1976. 

\bibitem{Ber95} 
D.~P.~Bertsekas,
{\em  Nonlinear Programming}.
Athena Scientific, Belmont 1995.

\bibitem{Bon15a} 
S.~Bonettini, L.~Loris, F.~Porta, and M.~Prato,
Variable metric inexact line-search based methods
for nonsmooth optimization,
{\em SIAM J. Optim.}, 26:~2, 891--921. 2016. 

\bibitem{Bon15} 
S.~Bonettini and M.~Prato,
New convergence results for the scaled gradient projection method,
{\em Inverse Problems}, 31:~9. 2015. 

\bibitem{Bon12} 
S.~Bonettini and V.~Ruggiero,
On the convergence of primal-dual hybrid gradient algorithms for total variation image restoration.
{\em J. Math. Imaging Vis.}, 44:~3, 236--253. 2012. 

\bibitem{Bur95} 
R.~Burachik, L.~M.~Gra{\~n}a~Drummond, A.~N.~Iusem, and B.~F.~Svaiter,
Full convergence of the steepest descent method with inexact line searches.
{\em Optimization}, 32:~2, 137--146. 1995. 

\bibitem{Bre09} 
K.~Bredies,
A forward-backward splitting algorithm for the minimization of non-smooth convex functionals in Banach space.
{\em Inverse Problems}, 25. 2009. 

\bibitem{Cal87}
P.~H.~Calamai and J.~J.~Mor\'e,
Projected gradient methods for linearly constrained problems.
{\em Math. Program.}, 39:~1, 93--116. 1987. 

\bibitem{Che97}
G.~H-G.~Chen and R.~T.~Rockafellar,
Convergence rates in forward-backward splitting.
{\em SIAM J. Optim.}, 7:~2, 421--444. 1997. 

\bibitem{Cho14}
E.~Chouzenoux, J.-C.~Pesquet, and A.~Repetti,
Variable metric forward--backward algorithm for minimizing the sum of a 
differentiable function and a convex function.
{\em J. Optim. Theory Appl.}, 162:~1, 107--132. 2014. 

\bibitem{Comb2001} 
P.~L.~Combettes,
Quasi-Fej\'erian analysis of some optimization algorithms.
In {\em Inherently Parallel Algorithms in Feasibility and Optimization and Their Applications}, 
(D. Butnariu, Y. Censor, and S. Reich, Eds.), 
pp. 115-152. New York: Elsevier, 2001. 


\bibitem{Comb2013} 
P.~L.~Combettes and B.~C. V\~u,
Variable metric quasi-Fej\'er monotonicity.
{\em Nonlinear Analysis: Theory, Methods \& Applications}, 78, 17--31. 2013. 

\bibitem{Comb2014} 
P.~L.~Combettes and B.~C. V\~u,
Variable metric forward-backward splitting with applications to monotone inclusions in duality.
{\em Optimization}, 63(9), 1289--1318. 2014. 

\bibitem{Smms05}
P. L. Combettes and V. R. Wajs,
Signal recovery by proximal forward-backward splitting.
\emph{Multiscale Modeling and Simulation}, 
4:1168--1200, 2005.

\bibitem{Comb15}
P. L. Combettes and I. Yamada,
Compositions and convex combinations of averaged nonexpansive operators.
\emph{J. Math. Anal. Appl.}, 
425(1):55--70, 2015.


\bibitem{Dav2015} 
D.~Davis and Y.~Yin,
Convergence rate analysis of several splitting schemes.
arXiv:14064834v3, 2015. 

\bibitem{Dau04}
I.~Daubechies, M.~Defrise, C.~De~Mol,
An iterative thresholding algorithm for linear inverse problems with a sparsity constraint.
\emph{Comm. Pure Appl. Math.}, 
57(11), 1413--1457, 2004.

\bibitem{Fra15}
P.~Frankel, G.~Garrigos, and J.~Peypouquet,
Splitting methods with variable metric for Kerdyka-\L ojasiewicz functions
and general convergence rates.
\emph{J. Optim. Theory Appl.}, 165, 874--900, 2015.

\bibitem{Gaf82}
E.~M.~Gafni and D.~P.~Bertsekas,
Convergence of a gradient projection method.
\emph{Technical report LIDS-P-1201, Laboratory for Information and Decision System}, 1982.

\bibitem{Gaf84}
E.~M.~Gafni and D.~P.~Bertsekas,
Two-metric projection methods for constrained optimization.
\emph{SIAM J. Control Optim.}, 22(6), 936--964, 1984.

\bibitem{Gol64}
A.~A.~Goldstein,
Convex programming in Hilbert spaces.
\emph{Bull. Amer. Math. Soc.}, 70(5), 709--710, 1964.

\bibitem{Livre2} 
J.-B. Hiriart-Urruty and C.~Lemar\'echal,
{\em Convex Analysis and Minimization Algorithms II.}
Springer, Berlin 1996.


\bibitem{Ius03}
A.~N.~Iusem,
On the convergence properties of the projected gradient method for convex optimization.
\emph{Comput. Appl. Math.}, 22(1), 37--52, 2003.

\bibitem{Lee14}
J. D. Lee, Y. Sun, and M. A. Saunders,
Proximal Newton-type methods for minimizing composite functions.
\emph{SIAM J. Optim.}, 24(3), 1420--1443, 2014.

\bibitem{Lev66}
E. S. Levitin and B. T. Polyak,
Constrained minimization problems.
\emph{USSR Comput. Math. Math. Phys.}, 6, 1--50, 1966
(originally in \emph{Zh. Vychisl. Mat. i Mat. Fiz.}, 6, 787--823, 1965).


\bibitem{Lin15} 
J.~Lin, L. Rosasco, S. Villa, D-X Zhou,
Modified Fej\'er sequences and applications.
arXiv:1510.04641, 2015.

\bibitem{LindTzaf1979}
L. Lindenstrauss and L. Tzafriri,
{\em Classical Banach Spaces {II}.}
Springer, Berlin 1979.

\bibitem{McC72}
G.~P.~McCormick and R.~A. Tapia,
The gradient projection method under mild differentiability conditions.
\emph{SIAM J. Control.}, 10(1), 93--98, 1972.

\bibitem{Noc06} 
J.~Nocedal and S.~J.~Wright,
{\em Numerical Optimization 2nd ed.}
Springer, New York 2006.

\bibitem{Par08}
L. A. Parente, P. A. Lotito, and M. V. Solodov,
A Class of Inexact Variable Metric Proximal Point Algorithms.
\emph{SIAM J. Optim.}, 19(1), 240--260, 2008.

\bibitem{Ree80} 
M.~Reed and B.~Simon,
{\em Methods of Modern Mathematical Physics, Vol 1.}
Academic Press, San Diego, 1980.

\bibitem{Sal14}
S.~Salzo, S. Masecchia, A. Verri, and A. Barla,
Alternating Proximal Regularized Dictionary Learning. 
\emph{Neural Comput.}, 
26:12,  2014.

\bibitem{Sch2012}
T.~Schuster, B.~Kaltenbacher, B.~Hofmann, and K.~S.~Kazimierski,
{\em Regularization Methods in Banach Spaces.}
De Gruyter, Berlin, 2012.

\bibitem{Tra15}
Q.~Tran-Dinh, A.~Kyrillidis, and V. Cevher,
Composite self-concordant minimization. 
\emph{J. Mach. Learn. Res.}, 
16: 371-416, 2015.

\bibitem{Tseng00}
P.~Tseng,
A modified forward-backward splitting method for maximal monotone mapping. 
\emph{SIAM J. Control Optim.}, 
38:431--446, 2000.

\bibitem{Tseng09}
P.~Tseng and S.~Yun
A coordinate gradient descent method for nonsmooth separable minimization. 
\emph{Math. Program.}, 117:387--423, 2009.

\end{thebibliography}
\end{document}